\newcommand{\ep}{\varepsilon}
\newcommand{\N}{\mathbb{N}}
\newcommand{\R}{\mathbb{R}}
\newcommand\restr[2]{{
  \left.\kern-\nulldelimiterspace 
  #1 
  \vphantom{\big|} 
  \right|_{#2} 
  }}
\DeclareMathOperator{\co}{co}
\renewcommand{\geq}{\geqslant}
\renewcommand{\leq}{\leqslant}
\newtheorem{theorem}{Theorem}[section]
\newtheorem{lemma}[theorem]{Lemma}
\newtheorem{prop}[theorem]{Proposition}
\newtheorem{corollary}[theorem]{Corollary}
\theoremstyle{definition}
\newtheorem{definition}[theorem]{Definition}
\theoremstyle{remark}
\newtheorem{remark}[theorem]{Remark}
\numberwithin{equation}{section}
\def\fnote#1{\footnote}
\def\R{{\mathbb R}}
\def\ignora#1{}
\def\n3#1{\left\vert  \! \left\vert \! \left\vert \, #1 \, \right\vert \!
  \right\vert \! \right\vert }
\begin{document}

\keywords{Lipschitz retractions; approximation properties}

\subjclass[2020]{46B20; 46B80; 54C55}

\title[Compact retracts]{Compact retractions and Schauder decompositions in Banach spaces}

\author{Petr H\'ajek}\thanks{This research was supported by CAAS CZ.02.1.01/0.0/0.0/16-019/0000778
 and by the project  SGS21/056/OHK3/1T/13.}
\address[P. H\'ajek]{Czech Technical University in Prague, Faculty of Electrical Engineering.
Department of Mathematics, Technická 2, 166 27 Praha 6 (Czech Republic)}
\email{ hajek@math.cas.cz}

\author{ Rub\'en Medina}\thanks{The second author research has also been supported by MICINN (Spain) Project PGC2018-093794-B-I00 and MIU (Spain) FPU19/04085 Grant.}
\address[R. Medina]{Universidad de Granada, Facultad de Ciencias.
Departamento de An\'{a}lisis Matem\'{a}tico, 18071-Granada
(Spain); and Czech technical University in Prague, Faculty of Electrical Engineering.
Department of Mathematics, Technická 2, 166 27 Praha 6 (Czech Republic)}
\email{rubenmedina@ugr.es}
\urladdr{\url{https://www.ugr.es/personal/ae3750ed9865e58ab7ad9e11e37f72f4}}

\maketitle 

\begin{abstract}

In our note we show the very close connection between the existence of 
a Finite Dimensional Decomposition (FDD for short) for a separable Banach space $X$
and the existence of a Lipschitz retraction of $X$ onto a small (in a certain precise sense) generating convex and compact
subset $K$ of $X$.

In one direction,  if $X$ admits an FDD  then we construct a Lipschitz retraction onto
a small generating convex and compact set $K$. On the other
hand, we prove that if $X$ admits a small generating compact Lipschitz retract then $X$ has the $\pi$-property.
We note that it is still unknown if the $\pi$-property is isomorphically equivalent to the existence of an FDD.

For dual Banach spaces this is true, so our results lead in particular to a characterization of the FDD property for dual Banach spaces $X$
in terms of the existence of Lipschitz retractions onto small generating convex and compact subsets of $X$.

It is conceivable that our results will find applications in the area of Lipschitz isomorphisms of Banach spaces.

Our arguments make critical use of the Lipschitzization of coarse Lipschitz mappings due to J. Bourgain, and of
an unpublished complementability result of V. Milman.

 We  give an example of a small generating convex  compact set which is not a Lipschitz retract of $C[0,1]$,
although it is contained in a small convex Lipschitz retract and contains another one. 

In the last part of our note we characterize isomorphically Hilbertian spaces as those Banach spaces $X$ for which every convex and compact subset is a Lipschitz retract of $X$. 
Finally, we prove that a convex and compact set $K$ in any Banach space with
a Uniformly Rotund in Every Direction norm is a uniform retract, of every bounded set containing it, via the nearest point map.
\end{abstract}

\section{Introduction}

The study of retractions is a large area of research in topology and nonlinear analysis with many applications.
In the uniform or Lipschitz setting  the authoritative
monograph \cite{BL2000} contains many fundamental results and the general point of view.

The results in our paper were originally motivated by the natural question asked by Godefroy and Ozawa in \cite{GO14}
(and then subsequently in \cite{God15}, \cite{God215}, \cite{God20},  and \cite{GP19}) whether every
separable Banach space admits a generating convex and compact Lipschitz retract (GCCR, for short).
By a generating  set in a Banach space $X$ we mean the set $C$ such that the closed linear span of $C$ is the whole space $X$.
We will say that $C$ is a $\lambda$-GCCR if it is a GCCR and there exists a $\lambda$-Lipschitz retraction from $X$ onto $C$.
A positive answer to this problem would immediately imply that every separable Banach space is
Lipschitz approximable, solving an important open problem of Kalton.
Using a rather short, but ingenious argument, Kalton \cite{Kal12} showed that indeed if $X$ has a separable dual
(or it is itself a separable dual) then $X$
is approximable. But his compact identity-approximating mappings are far from being retractions. In \cite{GO14}
it is noted that the retractions can be constructed rather easily if $X$ has an unconditional basis, but the same approach 
fails even for spaces with a Schauder basis.

Our first result (Theorem \ref{theoFDDcompact}) is that a Banach space $X$  with an FDD admits a GCCR. Moreover, if
$X$ has a monotone FDD, then it has a  $(5+\epsilon)$-GCCR, for every $\epsilon>0$. If
$X$ has a monotone Schauder basis, then it has a  $(1+\epsilon)$-GCCR, for every $\epsilon>0$. 
Our GCCR is roughly speaking a diamond shaped set, and the
 retraction mapping is somehow aligned with the 
canonical projections but it is certainly not the nearest point map.  

The compact sets used in our arguments
are in some sense small. Later on, for the purposes of proving  results going in the opposite direction, we proceed by formally defining the
quantitative  concept of a small set, which is intuitively a generating compact
subset of $X$  that  is contained in sufficiently small neighbourhoods of its finite dimensional sections.
The smallness condition restricts the asymptotic behaviour of the compact set, in a certain sense, but  otherwise it leaves a
 complete freedom as regards its possible shape.  It is curious that  our  results in both directions
lead to small compacts of roughly the same size.

 Our second, and perhaps the main result of the note (Theorem \ref{theosmall}),  is that
if a Banach space admits a small Lipschitz retract (in particular, a small  GCCR)  then it has the $\pi$-property.
Note that a Banach space with the metric $\pi$-property has an FDD \cite{JW70}, see also \cite{Cas01} Thm. 6.4, 
and it is still open if the $\pi$-property implies metric $\pi$-property (and hence FDD) under an equivalent renorming.
So our previous results combined together are possibly a characterization of  the existence of an FDD property for the Banach space $X$  in terms of 
the existence of Lipschitz retractions onto small GCCR. In the case of dual spaces the characterization holds true, thanks to Theorem 1.3 in \cite{Joh2}.

The proof is based
on several deep ingredients. Of course, one would like to use the (Gateaux) differentiation theory for Lipschitz
mappings in separable Banach spaces in order to pass from the compact Lipschitz retraction to
its linearization with a small range. However,  to do this directly using the 
abundance of points of Gateaux differentiability  does not seem possible. Instead, our proof takes a detour, and produces the finite rank
linear projections (needed for the $\pi$-property) indirectly, only proving their existence. 
We first pass from the small retract $K$ to a finite dimensional subspace $E_n$, which contains the bulk of the points of $K$, in a certain sense. Of course, we do not immediately have in our hands
a good Lipschitz retraction from $X$ onto $E_n$, but using the "Lipschitzization" of coarse Lipschitz mapings due to Bourgain \cite{Bou87} (in the formulation of Begun \cite{B99}) we obtain a good Lipschitz almost retraction to $E_n$ from  finite dimensional
subspaces $G$ of $X$, of controlled dimension, which contain $E_n$. For the next step we use the deep and yet unpublished result,
 due to Vitali Milman,
communicated to us by Bill Johnson. Namely, the projection constant $\lambda(E_n,X)$
of a $n$-dimensional subspace $E_n$ of $X$ is witnessed by $\lambda(E_n, G)$ for a certain finite dimensional subspace $G$ of $X$,
of dimension roughly $3^n$. This unexpected fact makes our subsequent argument  quantitatively independent of the
Banach space $X$.  At this point we may use the differentiation theory and averaging in the finite dimensional setting, applied to
the finite dimensional approximate version of our retraction (crucially using the smallness assumption for $K$)
to produce a good linear projection from $G$ to $E_n$. But this implies that $\lambda(E_n,X)$ cannot be too large, which eventually yields the $\pi$-property for $X$.

Since the $\pi$-property is equivalent to the FDD property for dual Banach spaces \cite{JRZ71}, as a corollary to our
above results we obtain a retractional characterization of  an FDD in the class of dual Banach spaces.

We remark that our techniques above are applicable to small compacts only, and we do not know
if analogous results hold for general convex compact subsets of $X$. 

Using the same approach,
we also give a variant of the result of Godefroy and Kalton in \cite{GK03}, who characterized the BAP
property by means of a sequence of finite rank Lipschitz mappings pointwise convergent to the identity. Namely, using the finite rank Lipschitz retractions we characterize the $\pi$-property.

We also note that
if there is a Lipschitz retraction of $X$ onto a convex compact set $K$, and $Y$ is a closed linear subspace of $X$
spanned by $K$, which is linearly complemented in its bidual $Y^{**}$, then $Y$ is a complemented subspace of $X$.
This means that a natural way of getting the Lipschitz retraction from $X$ onto $K$ is  to simply 
compose the linear projection from $X$ onto $Y$ with a Lipschitz retraction from $Y$ onto $K$.

In the next section, we give an example of a small  compact convex set $K$ in the space $C[0,1]$, which is contained in a small GCCR,
 and contains another small GCCR, but such that $K$ is not a 
GCCR for the space $C[0,1]$. This example underlines the subtleness of the existence of Lipschitz retractions.

In our final section we first give a new characterization of isomorphically Hilbertian spaces as those Banach spaces for which every convex and compact subset
is a Lipschitz retract. 

Then we
 proceed to the problem of uniformly continuous retractions onto convex compact sets.
Our main result is that if a  Banach space is equipped with a URED norm, then
every convex  and compact subset is a uniformly continuous retract, from any bounded superset, with respect to the nearest point map.
We recall that every separable Banach space admits an equivalent URED renorming.
This result implies, in particular, that every convex and compact set is an absolute uniform retract,
a fact recently established in \cite{CCW21}. The case of a general (nonseparable) space $X$ is also treated.

\section{Preliminaries}

Our notation and terminology is standard. We work in the setting of real Banach spaces throughout this note.
For the general concepts and results of Banach space theory we refer to \cite{Fab1}.
The background on the various forms of the approximation property can be found in \cite{Cas01}.
For  the theory of Lipschitz and uniform retracts we refer to the first
two chapters in the monograph \cite{BL2000}.

Let us now pass to some definitions and results which are used heavily in our note. We start with the formulation of  several classical
concepts related to the approximation property, in the growing generality.

A Schauder basis for a real Banach space $X$ is a sequence $(x_n)\subset X$ with the property that for every $x\in X$, there exists a unique sequence $(\alpha_n)\subset \R$ such that
$$\bigg|\bigg| x-\sum\limits_{i=1}^n\alpha_ix_i \bigg|\bigg|\xrightarrow{n\to\infty}0.$$
In this case we say that $X$ has a Schauder basis  and we call the projections $P_n(x)=\sum\limits_{i=1}^n\alpha_ix_i$ the natural projections of the Schauder basis.
If $(x_n)$ is a Schauder basis in a Banach space $X$, then we denote by  $(x^*_n)$ the coordinate functionals, which form a 
basic sequence in the dual space $X^*$.

\begin{definition}
A sequence $(E_n)$ of finite dimensional subspaces of a Banach space $X$ is called the finite dimensional decomposition (FDD for short) if for every $x\in X$ there is a unique sequence $x_n\in E_n$ so that
$$\bigg|\bigg| x-\sum\limits_{i=1}^nx_i \bigg|\bigg|\xrightarrow{n\to\infty}0.$$
In this case we say that $X$ has an FDD and we call the projections $P_n(x)=\sum\limits_{i=1}^nx_i$ the natural projections of the FDD.
\end{definition}

If $X$ is a Banach space with a Schauder basis (resp. FDD) then the natural projections of the basis (resp. FDD) are uniformly bounded. Moreover, $X$ can be equivalently renormed so that this uniform bound is 1. In this case we say that the Schauder basis (resp. FDD) is monotone.

\begin{definition}
A Banach space $X$ is said to have the $\pi$-property if there is a uniformly bounded net of finite rank projections $(S_\alpha)$ on $X$ converging strongly to the identity on $X$. If this uniform bound is $\lambda\ge1$ then we  say that $X$ has the $\pi_\lambda$-property. 
In the case when  $\lambda=1$ we say that $X$ has the metric $\pi$-property.
\end{definition}

\begin{prop}[\cite{Cas01}]\label{conmutingpi}
For a separable Banach space $X$ and $\lambda\ge1$, the following are equivalent:
\begin{itemize}
\item X has the $\pi_\lambda$-property
\item There is a sequence of $\lambda$-bounded finite rank projections $(S_n)$ on $X$ pointwise converging to the identity such that
$$S_mS_n=S_n \;\;\;\;\forall m\ge n.$$
\end{itemize}
\end{prop}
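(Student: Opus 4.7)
The implication (ii)$\Rightarrow$(i) is immediate, since the sequence $(S_n)$ itself is a uniformly bounded net of finite-rank projections converging strongly to $\operatorname{id}_X$. For the converse, suppose $X$ has the $\pi_\lambda$-property, witnessed by a net $(T_\alpha)$ of finite-rank projections with $\|T_\alpha\|\leq\lambda$ and $T_\alpha\to\operatorname{id}_X$ strongly. Using separability, fix a dense sequence $(x_k)\subset X$. The plan is to construct $(S_n)$ recursively, setting $F_n:=S_n(X)$ and requiring at each step: (a) $F_{n-1}\subset F_n$ and $S_n|_{F_{n-1}}=\operatorname{id}_{F_{n-1}}$; (b) $\|S_n\|\leq\lambda$; (c) $\|S_n x_k-x_k\|\leq 1/n$ for every $k\leq n$. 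Condition (a) combined with the projection property yields $S_mS_n=S_n$ whenever $m\geq n$ (because $S_m$ fixes its own range $F_m\supset F_n$), while (b) and (c) together with density of $(x_k)$ give strong convergence $S_n\to\operatorname{id}_X$.

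The inductive step reduces to the following lifting lemma: for every finite-dimensional $F\subset X$, every finite set $\{y_1,\dots,y_N\}\subset X$, and every $\varepsilon>0$, there exists a finite-rank projection $S$ on $X$ with $\|S\|\leq\lambda$, $S|_F=\operatorname{id}_F$, and $\|Sy_i-y_i\|<\varepsilon$ for each $i$. To prove the lemma I would exploit compactness of $B_F$ together with strong convergence of $(T_\alpha)$ in order to select $\alpha$ with $\|T_\alpha|_F-\operatorname{id}_F\|$ (operator norm from $F$ to $X$) and $\max_i\|T_\alpha y_i-y_i\|$ both as small as desired. Since $T_\alpha|_F$ is then an isomorphism onto $R:=T_\alpha(F)$ with inverse $\sigma\colon R\to F$ close to the inclusion, one builds the perturbation
$$\tilde S \;=\; \bigl(\sigma\circ\rho+(\operatorname{id}_{T_\alpha(X)}-\rho)\bigr)\circ T_\alpha,$$
where $\rho$ is an internal projection of the finite-dimensional space $T_\alpha(X)$ onto $R$; a direct verification shows that $\tilde S$ is a finite-rank projection onto $F+T_\alpha(X)$ restricting to $\operatorname{id}_F$. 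Iterating the lifting at stage $n$ with $F=F_{n-1}$, $\{y_i\}=\{x_1,\dots,x_n\}$, and $\varepsilon=1/n$ produces the inductive sequence.

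The main obstacle is to keep the bound at exactly $\lambda$ rather than $\lambda(1+\eta)$, since the naive estimate only gives $\|\tilde S\|\leq(\|\sigma\|\|\rho\|+\|\operatorname{id}-\rho\|)\lambda$, and the Auerbach-type choice of $\rho$ forces $\|\rho\|>1$ in general. The remedy is a two-step argument: first one establishes the lifting lemma with the relaxed bound $\lambda+\eta$ via the perturbation above; then one absorbs the loss by a compactness and diagonal-extraction procedure, choosing $\eta_n\to 0$ at stage $n$ and passing to a pointwise cluster point of the resulting $(\lambda+\eta_n)$-bounded commuting projections inside the $\lambda$-ball of $\mathcal{L}(X)$, which is closed in the strong operator topology. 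This yields a sequence of genuine $\lambda$-bounded commuting finite-rank projections satisfying (a)--(c), completing the equivalence.
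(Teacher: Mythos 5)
The paper gives no proof of this proposition at all; it is quoted from Casazza's survey \cite{Cas01}, so there is no in-paper argument to compare yours against, and I can only assess your proposal on its own terms. Your overall strategy --- the trivial direction plus an inductive construction driven by a perturbation lemma that upgrades an almost-identity finite-rank projection $T_\alpha$ into a genuine projection fixing a prescribed finite-dimensional subspace $F$ --- is the standard route to this result. The algebra of your perturbation is sound: $\tilde S=(\sigma\rho+(\operatorname{id}-\rho))T_\alpha$ is idempotent (it fixes $F$ and $\ker\rho\cap T_\alpha(X)$ pointwise and maps $X$ into their sum, using that $T_\alpha$ is itself a projection), it restricts to the identity on $F$, and writing $\iota\colon R\hookrightarrow X$ for the inclusion one gets $\|\tilde S\|\le(1+\|\sigma-\iota\|\,\|\rho\|)\lambda$, which is $\lambda(1+\eta)$ with $\eta$ arbitrarily small once $\alpha$ is chosen far enough out, since Kadec--Snobar bounds $\|\rho\|$ by $\sqrt{\dim F}$, a quantity fixed before $\alpha$ is selected. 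The deduction of $S_mS_n=S_n$ from nested ranges is also correct.

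The genuine gap is your final step, where you claim to recover the exact bound $\lambda$ by ``compactness and diagonal-extraction \dots\ passing to a pointwise cluster point \dots\ inside the $\lambda$-ball of $\mathcal{L}(X)$, which is closed in the strong operator topology.'' Closedness is true but does not help: bounded subsets of $\mathcal{L}(X)$ are not relatively compact in the strong (or even weak) operator topology for a general separable $X$, because bounded orbits $\{S_jx\}$ need not be relatively norm-compact, so there is no cluster point to pass to. Moreover, even granting a cluster point, an SOT-limit of finite-rank projections of unbounded rank need not be a finite-rank projection --- indeed the only SOT-limit your sequence $(S_n)$ itself admits is $\operatorname{id}_X$. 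If you instead intend to run the entire construction once for each tolerance $\eta$ and take limits in $\eta$ for each fixed index $n$, the same compactness obstruction applies, and the nesting and convergence conditions would have to be re-established for the limiting operators. What your argument actually establishes is the equivalence with the bound $\lambda+\varepsilon$ for every $\varepsilon>0$ in the second item, which is the form this perturbation argument naturally yields; producing the commuting sequence with the exact constant $\lambda$ requires an additional idea that your proposal does not supply.
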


As we mentioned above, it seems to be an open problem whether the $\pi$-property is equivalent to the existence of an FDD for a separable Banach space.
\begin{definition}
Let $X$ be a Banach space. If there is a uniformly bounded net of finite rank operators $(T_\alpha)$ on $X$ tending strongly to the identity on $X$, then we say that $X$ has the bounded approximation property (BAP for short). If $\lambda\ge1$ is a uniform bound for the net then we say that $X$ has the $\lambda$-bounded approximation property ($\lambda$-BAP for short). We refer to  the 1-BAP as the metric approximation property (MAP for short).\\
\end{definition}

\begin{definition}
Let $X$ be a Banach space. If there is a net of finite rank operators $(T_\alpha)$ on $X$ converging to the identity on $X$ uniformly on compacta, then we say that $X$ has the approximation property (AP for short).\\
\end{definition}

\begin{definition}
A Banach space $X$ is said to have the compact approximation property if for every $\ep>0$ and every compact set $K$ in $X$ there is a compact operator $T\in \mathcal{L}(X)$ so that $||Tx-x||\le \ep$ for all $x\in K$.
\end{definition}

For an arbitrary Banach space $X$, the previously defined concepts are ordered from the strongest to the weakest, that is,
$$\text{Schauder basis }\Rightarrow\text{ FDD }\Rightarrow\;\pi\text{-property }\Rightarrow\text{ BAP }\Rightarrow\text{ AP }\Rightarrow\text{ CAP.}$$
With the possible exception of $\text{ FDD }\Rightarrow\;\pi\text{-property }$, none of the above implications can be reversed (\cite{Sza87},\cite {Rea}, \cite{FJ73} and \cite{Wil92}).

Let us pass to  the non-linear approximation properties.

\begin{definition}
A map $T$ from a metric space $M$ into another metric space $N$ is said to be Lipschitz if there exists some $\lambda>0$ such that
$$d(T(x),T(y))\le \lambda d(x,y) \;\;\;\;\forall x,y\in M.$$
We say that $\lambda$ is the Lipschitz constant for $T$ and we call the infimum of all Lipschitz constants for $T$ the Lipschitz norm of $T$, that is,
$$||T||_{Lip}=\inf\big\{\lambda>0,\text{ Lipschitz constant for }T\big\}=\sup\limits_{x,y\in M, x\ne y}\frac{d\big(T(x),T(y)\big)}{d(x,y)}.$$
If $\lambda>0$ is a Lipschitz constant for $T$ then we say that $T$ is $\lambda$-Lipschitz.
\end{definition}

\begin{definition}
Let $X$ be a Banach space. If there is a net of finite rank Lipschitz maps $(T_\alpha)$ on $X$, whose Lipschitz norms are uniformly bounded, converging uniformly on compacta to the identity on $X$, then we say that $X$ has the Lipschitz bounded approximation property. If this net is bounded by $\lambda\ge1$ then we  say that $X$ has the $\lambda$-Lipschitz bounded approximation property.
\end{definition}

\begin{theorem}[\cite{GK03} Theorem 5.3]\label{GODKAL}
Let $X$ be an arbitrary Banach space. Then the following conditions are equivalent:
\begin{itemize}
\item X has the $\lambda$-BAP.
\item The Lipschitz free space $\mathcal{F}(X)$ has the $\lambda$-BAP.
\item X has the $\lambda$-Lipschitz bounded approximation property.
\end{itemize}
\end{theorem}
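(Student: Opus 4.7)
I would prove the cycle $\mathrm{(i)}\Rightarrow\mathrm{(iii)}\Rightarrow\mathrm{(ii)}\Rightarrow\mathrm{(i)}$ using two structural features of the Lipschitz free space: the \emph{universal linearization}, associating to every Lipschitz map $T\colon X\to Y$ into a Banach space $Y$ a unique bounded linear $\hat T\colon \mathcal F(X)\to Y$ with $\|\hat T\|=\|T\|_{\mathrm{Lip}}$, and the \emph{barycenter} $\beta_X\colon \mathcal F(X)\to X$, a linear contraction satisfying $\beta_X\circ\delta_X=\mathrm{id}_X$, where $\delta_X\colon X\to \mathcal F(X)$ is the canonical isometric Lipschitz embedding $x\mapsto\delta_x$.

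The implication $\mathrm{(i)}\Rightarrow\mathrm{(iii)}$ is immediate, since bounded linear operators are Lipschitz with Lipschitz norm equal to their operator norm. For $\mathrm{(iii)}\Rightarrow\mathrm{(ii)}$, I would take a net $(T_\alpha)$ of Lipschitz finite-rank maps on $X$ with $\|T_\alpha\|_{\mathrm{Lip}}\le\lambda$ and $T_\alpha\to \mathrm{id}_X$ uniformly on compacta, and linearize each $\delta_X\circ T_\alpha$ to obtain bounded operators on $\mathcal F(X)$ of norm at most $\lambda$ whose range lies in $\mathcal F(E_\alpha)$ for some finite-dimensional $E_\alpha\subset X$. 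These operators are not yet finite-rank on $\mathcal F(X)$, so the crucial intermediate step is to invoke the fact that $\mathcal F(E)$ carries the MAP whenever $E$ is finite-dimensional, and then to post-compose with its internal finite-rank approximations. Density of $\mathrm{span}\,\delta_X(X)$ in $\mathcal F(X)$ combined with uniform norm control transports pointwise convergence from $\delta_X(X)$ to the whole free space, delivering $\lambda$-BAP for $\mathcal F(X)$.

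For $\mathrm{(ii)}\Rightarrow\mathrm{(i)}$, given $\lambda$-bounded finite-rank $(P_\alpha)$ on $\mathcal F(X)$ converging to the identity, the compositions $\beta_X\circ P_\alpha\circ \delta_X\colon X\to X$ are Lipschitz finite-rank maps of Lipschitz norm at most $\lambda$ converging on compacta to $\mathrm{id}_X$, which yields $\mathrm{(iii)}$ at once. To upgrade these Lipschitz approximations to linear ones while preserving the norm bound, I would use a Gateaux differentiation and averaging argument, relying on the fact that on a separable Banach space a Lipschitz map into a finite-dimensional target is Gateaux differentiable outside an Aronszajn-null set, with derivative a linear operator of norm at most the Lipschitz constant. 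Averaging such derivatives over a suitable compact convex family of base points and controlling the errors carefully then produces linear finite-rank operators on $X$ approximating $\mathrm{id}_X$ with norms at most $\lambda$.

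The principal obstacle will be securing the sharp-constant MAP for $\mathcal F(E)$ when $E$ is finite-dimensional, which is the deep technical heart of the theorem and is typically proved via a partition of unity subordinate to a fine net in $E$ together with a precise ``weighted-barycentric'' construction. A secondary difficulty is arranging the Gateaux averaging in the last implication so that both the exact constant $\lambda$ is preserved and the averaged operators still converge to $\mathrm{id}_X$ on a dense set of points; this is why I would keep the route through $\mathcal F(X)$ rather than attempt a direct linearization of the given Lipschitz finite-rank maps on $X$.
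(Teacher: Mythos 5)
A preliminary remark: the paper does not prove Theorem \ref{GODKAL} at all — it is quoted from \cite{GK03} — so your proposal can only be compared with the original Godefroy--Kalton argument. Your overall architecture (the linearization functor $T\mapsto\widehat T$, the barycenter $\beta_X$, transfer through free spaces over finite-dimensional subspaces, and a final linearization step) is indeed the standard one, and the easy implications are fine: (i)$\Rightarrow$(iii) is trivial, and $\beta_X\circ P_\alpha\circ\delta_X$ does give finite-rank $\lambda$-Lipschitz approximations of the identity from (ii).

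However, the two steps that carry essentially all the weight are not supplied, and as sketched they would not close. First, in (iii)$\Rightarrow$(ii) you must post-compose $\widehat{\delta_X\circ T_\alpha}$ with finite-rank approximations of the identity \emph{inside} $\mathcal{F}(E_\alpha)$ of norm at most $1+\ep$ \emph{uniformly in} $\dim E_\alpha$; otherwise the bound $\lambda\cdot C(\dim E_\alpha)$ blows up along the net and no $\lambda$-BAP (indeed no BAP) for $\mathcal{F}(X)$ results. A partition of unity subordinate to a fine net, or barycentric interpolation on a triangulation, produces operators whose norms depend on the dimension (or the doubling constant), so your proposed construction does not yield the metric approximation property of $\mathcal{F}(E)$ that you need; moreover that statement is precisely the $\lambda=1$, finite-dimensional case of the equivalence being proved (in the literature it is usually \emph{deduced} from the Godefroy--Kalton theorem, and direct proofs for subsets of $\mathbb{R}^n$ are genuinely delicate). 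Note also that $\mathcal{F}(\mathbb{R}^n)$ contains $L_1$ and fails the RNP, so no Grothendieck-type ``separable dual with AP has MAP'' shortcut is available. Second, in the final linearization step, averaging Gateaux derivatives ``over a suitable compact convex family of base points'' meets a real obstruction: such a family (e.g.\ a ball of the finite-dimensional subspace spanned by the points you want to almost fix) is Aronszajn/Gauss null, so Gateaux differentiability of $T_\alpha$ at those base points is not guaranteed; if instead you restrict $T_\alpha$ to that subspace in order to invoke Rademacher, the averaged derivative is a linear map defined only on the subspace, and extending it to an operator on $X$ costs the projection constant of the subspace — so you do not obtain the finite-rank operators \emph{on $X$} of norm at most $\lambda$ that the $\lambda$-BAP requires. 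Repairing this needs either averaging the Gateaux derivative against a carefully chosen anisotropic Gaussian measure with total-variation control of its translates in the prescribed directions, or bypassing differentiation altogether as Godefroy--Kalton do for separable $X$ via their isometric lifting theorem ($X$ is then $1$-complemented in $\mathcal{F}(X)$, so the $\lambda$-BAP passes down linearly); in either case the nonseparable situation, which the theorem covers, requires an additional separable-reduction argument that your outline does not address. Minor points (normalizing $T_\alpha(0)=0$ before linearizing, and the $\lambda(1+\ep)\to\lambda$ bookkeeping) are routine.
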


\begin{definition}
We say that a complete metric space $M$ is approximable whenever there is a subadditive map $\omega:[0,\infty)\rightarrow[0,\infty)$ with $\lim\limits_{t\to0}\omega(t)=0$ so that for every finite set $E\subset M$ and every $\ep>0$ we can find a uniformly continuous map $\psi:M\rightarrow M$ such that $d(x,\psi(x))<\ep$ for every $x\in E$, $\psi(M)$ is relatively compact and the modulus of continuity of  $\psi$ is bounded by $\omega$.\\
If $\omega(t)=Lt$ for some $L>0$ then we say that $M$ is Lipschitz approximable.
\end{definition}

If $M$ is separable then it is easy to see that $M$ is approximable (resp. Lipschitz approximable) if and only if there is an equi-uniformly continuous (resp. equi-Lipschitz) sequence of maps $\psi_n:M\rightarrow M$ with relatively compact range such that $\lim\limits_{n\to\infty}d(x,\psi_n(x))=0$ for every $x\in X$.

These concepts were  introduced and studied in several papers, \cite{Kal04}, \cite{Kal12} and \cite{GLZ14}.

Kalton proved in \cite{Kal12} that every Banach space with a separable dual (or a separable dual space itself) is
approximable. It is still an open problem if every separable Banach space is approximable.
Godefroy, on the other hand, observed in \cite{God20} that the compact approximation property implies that the space is
Lipschitz approximable, showing  that  the Lipschitz approximability is a strictly weaker property than the AP.\\

In the next sections, we are going to make a repeated use of the following concepts.

\begin{definition}
Let $M$ be a metric space and $N\subset M$. A retraction from $M$ onto $N$ is a map $R:M\rightarrow N$ such that $\restr{R}{N}=Id_N$.
 In this case we say that $N$ is a retract of $X$. If $R$  is Lipschitz (resp. uniformly continuous) then we say that $R$ is a Lipschitz (resp. uniformly continuous) retraction onto $N$ and $N$ is a Lipschitz (resp. uniformly continuous) retract of $M$.

If $N$ is a Lipschitz (resp. uniformly continuous) retract of $M$ for every metric space $M$ containing it, we say that $N$ is an absolute Lipschitz (resp. uniformly continuous) retract.
\end{definition}

We will say that a subset $K$ of a Banach space $X$ generates $X$ whenever the closed linear span of $K$ is equal to $X$. A Lipschitz retract $K$ of a Banach space $X$ that is convex, compact, and generates $X$ is going to be called a generating convex compact retract (GCCR for short) of $X$.

Note that the existence of a GCCR implies that the space $X$ is Lipschitz approximable.

\begin{definition}
Let $M$ be a metric space,  $N\subset M$. We say that $R:M\rightarrow N$  is a proximity map (or a nearest point map) onto $N$ if
$$d(R(x),x)=\inf\limits_{y\in N}d(y,x)\;\;\;\forall x\in M.$$
\end{definition}

A proximity map may not exist in some situations and if it exists it may not be unique. If $M$ is a uniformly convex Banach space then this map is known to be unique and uniformly continuous whenever $N$ is a closed convex subset \cite{Bjo79}.

\begin{definition}
A normed space $X$ is said to be uniformly rotund in the direction $z\in X$ if whenever $(x_n)$ and $(y_n)$ are two sequences in $X$ such that
\begin{enumerate}
\item $||x_n||=||y_n||=1$ for every $n\in\N$,
\item $\lim\limits_{n\to\infty}\big|\big| \frac{x_n+y_n}{2} \big|\big|=1$,
\item There is a sequence of real numbers $(r_n)$ such that $x_n-y_n=r_nz$ for every $n\in\N$,
\end{enumerate}
then $\lim\limits_{n\to\infty}||x_n-y_n||=0$.\\
If $X$ is uniformly rotund in the direction $z$ for every $z\in S_X$ then we say that $X$ is uniformly rotund in every direction (URED for short).
\end{definition}

\section{Compact Lipschitz retracts}\label{secretr}

The problem whether every separable Banach space has a GCCR
was posed in \cite{GO14}. In fact, having a GCCR for a Banach space $X$ gives a lot of information. For instance, it implies that $X$ is Lipschitz approximable and that $X$ has the BAP if and only if $\mathcal{F}(K)$ has the BAP by Theorem \ref{GODKAL}. Our goal in this section is a construction of a certain diamond shaped convex and compact set $K$, which generates the Banach space $X$, together with a Lipschitz retraction from $X$ onto $K$. Our construction is performed
under the assumption that $X$ has a FDD so in particular it gives a positive answer to the question asked in \cite{GP19}, whether Pelczynski's space has a GCCR. In order to make the construction work, we
are forced to make $K$ in some sense small. The method breaks down if we try to replace the FDD by
a weaker concept, e.g. a Markushevich basis or so. As we will find out in the subsequent parts of
our note, there is a good reason for that. Namely, the kind of retractions we are using here
imply that $X$ has the $\pi$-property (which is possibly equivalent to having an FDD).

Our approach is to define some Lipschitz retractions onto increasing finite-dimensional sections of $K$, so that then we proceed by taking a limit of these mappings to define the final retraction onto $K$. All these initial retractions will be defined as compositions of little perturbations of the natural projections of the FDD. To this end, we first prove some Lemmas stating the Lipschitz behaviour of the perturbed projections.

Let us proceed with the construction.
Let $(X,||\cdot||)$ be a Banach space with a monotone FDD  $(X_n)_{n\in\N}$ whose natural projections are $(P_n)_{n\in\N}$. For every $x\in X$ and $i\in\N$ we are going to denote $x_i=(P_i-P_{i-1})(x)$ where $P_0\equiv 0$. Let $(r_n)_{n\in \N}$ be an arbitrary decreasing sequence of positive real numbers. For every $m\in\N$  we define the function $f_m:X\rightarrow \R,$ by setting $f_1:X\rightarrow\{r_1\}$ the constant function, and
$$f_m(x)=r_m\left(1-\sum\limits_{i=1}^{m-1}\frac{||x_i||}{r_i}\right),\; m\ge2.$$
We will define for $n\in\N$ the following subsets of $X$
$$K=\overline{\co}\left(\bigcup\limits_{k\in\N}r_kB_{X_k}\right)\;\;\;\;,\;\;\;\;K_n=\overline{\co}\left(\bigcup\limits_{k=1}^nr_kB_{X_k}\right).$$
Finally, given $n,m\in\N$ such that $m\leq n$ we consider $E_{n,m}=P_m(X)\cup K_n$, and $F_{n,m}:E_{n,m}\rightarrow E_{n,m-1}$ given by
$$F_{n,m}(x)=\begin{cases}x\;,\;&\text{if }\;\sum\limits_{i=1}^{m}\frac{||x_i||}{r_i}\leq1,\\
P_{m-1}(x)\;,\;&\text{if }\;\sum\limits_{i=1}^{m-1}\frac{||x_i||}{r_i}\geq1,\\
P_{m-1}(x)+\frac{x_m}{||x_m||}f_m(x)\;,\;&\text{if }\;\sum\limits_{i=1}^{m-1}\frac{||x_i||}{r_i}<1<\sum\limits_{i=1}^{m}\frac{||x_i||}{r_i},\end{cases}$$
where, if $m=1$, we consider $\sum\limits_{i=1}^{0}\frac{||x_i||}{r_i}=0$.

One may see every $F_{n,m}$ as a perturbation of $P_{m-1}$ restricted to $E_{n,m}$. The next Lemma \ref{lemmalip2} states that it is possible to approximate the Lipschitz behaviour of $F_{n,m}$ by $F_{n-1,m}$.

\begin{lemma}\label{lemmalip2}
There exists a sequence $(A_m)_{m\in\N}\subset \R^+$ such that 
$$f_m\;\text{ is }\;\frac{r_mA_{m-1}}{r_{m-1}}\text{-Lipschitz}\;\;\;\;\;\forall m>1,$$
and if $m,n\in\N$ such that $m\le n-1$ then,
$$||F_{n,m}(x)-F_{n,m}(y)||\le\frac{r_nA_n}{r_{n-1}}||x-y||+||F_{n-1,m}\circ P_{n-1}(x)-F_{n-1,m}\circ P_{n-1}(y)||,$$
for every $x,y\in E_{n,m}$.
\begin{proof}
Just consider $A_m=2m$. Then, 
$$||P_m(x)||_{\ell_1(\{X_i\}_{i=1}^m)}\le A_m ||P_m(x)||\;\;\;\;\forall x\in X,$$
so we have that
$$\begin{aligned}|f_m(x)-f_m(y)|&=\left|r_m\left(\sum\limits_{i=1}^{m-1}\frac{||x_i||-||y_i||}{r_i}\right)\right|\le \frac{r_m}{r_{m-1}}\sum\limits_{i=1}^{m-1}||x_i-y_i||\\
&\le \frac{r_mA_{m-1}}{r_{m-1}}||P_{m-1}(x-y)||\le  \frac{r_mA_{m-1}}{r_{m-1}}||x-y||.\end{aligned}$$
Now, to prove the second part of the lemma, let us define $G_{n-1,m}:E_{n,m}\rightarrow E_{n-1,m-1}$ by
$$G_{n-1,m}(x)=F_{n-1,m}(P_{n-1}(x))\;\;\;\;\;\forall x\in E_{n,m}.$$
Taking into account the previous definitions, it is immediate that
$$G_{n-1,m}(x)=F_{n,m}(P_{n-1}(x))\;\;\;\;\;\forall x\in E_{n,m}.$$
Hence, it holds that $g_n:=F_{n,m}-G_{n-1,m}=P_{n}-P_{n-1}$.\\
Now, if $x\neq y\in E_{n,m}$ are such that $\sum\limits_{i=1}^{m}\frac{||x_i||}{r_i}\le1$ and $\sum\limits_{i=1}^{m}\frac{||y_i||}{r_i}\le 1$ then
$$\frac{||F_{n,m}(x)-F_{n,m}(y)||}{||x-y||}=1.$$
If that is not the case, then we may assume that $\sum\limits_{i=1}^{m}\frac{||y_i||}{r_i}>1$, so $||y_n||=0$ and $F_{n,m}(y)=F_{n,m}(P_{n-1}(y))=G_{n-1,m}(y)$. It turns out that
$$\begin{aligned}\frac{||F_{n,m}(x)-F_{n,m}(y)||}{||x-y||}&\le \frac{||g_n(x)||+||G_{n-1,m}(x)-G_{n-1,m}(y)||}{||x-y||}\end{aligned}.$$
It is enough to prove that $\frac{||g_n(x)||}{||x-y||}\le\frac{r_nA_n}{r_{n-1}}$. If $\sum\limits_{i=1}^{n}\frac{||x_i||}{r_i}>1$ then $||x_n||=0$ and it is obviously true. Otherwise,
$$\begin{aligned} \frac{||x_n||}{r_n}\le1- \sum\limits_{i=1}^{n-1}\frac{||x_i||}{r_i}\le \sum\limits_{i=1}^{n-1}\frac{||y_i||-||x_i||}{r_i}\le \sum\limits_{i=1}^{n-1}\frac{||x_i-y_i||}{r_i}\le\frac{\sum\limits_{i=1}^{n-1}||x_i-y_i||}{r_{n-1}} .\end{aligned}$$
So it is true that
$$||x-y||_{\ell_1(\{X_i\}_{i=1}^n)}\ge ||x_n||\frac{r_{n-1}}{r_n}.$$
Finally,
$$\frac{||g_n(x)||}{||x-y||}\le \frac{||x_n||A_n}{||x-y||_{\ell_1(\{X_i\}_{i=1}^n)}}\le\frac{||x_n||A_n}{||x_n||\frac{r_{n-1}}{r_n}}\le \frac{r_nA_n}{r_{n-1}}.$$
\end{proof}
\end{lemma}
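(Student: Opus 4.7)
My plan is to take $A_m:=2m$, which by monotonicity of the FDD satisfies the $\ell_1$-to-norm comparison $\sum_{i=1}^m\|x_i\|\le A_m\|P_mx\|$ (indeed $\|x_i\|=\|P_ix-P_{i-1}x\|\le 2\|x\|$). Both claims in the lemma then reduce to this comparison together with the monotonicity of $(r_n)$.

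For the Lipschitz bound on $f_m$, the point is that $f_m$ is an affine combination, with coefficients $-r_m/r_i$ for $i<m$, of the $1$-Lipschitz scalar functions $x\mapsto\|x_i\|$. Writing out $|f_m(x)-f_m(y)|\le r_m\sum_{i<m}\|x_i-y_i\|/r_i$, extracting the common denominator via $1/r_i\le 1/r_{m-1}$, and applying the $\ell_1$-estimate with constant $A_{m-1}$ produces the required $(r_mA_{m-1}/r_{m-1})\|x-y\|$ directly.

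For the recursive estimate on $F_{n,m}$, the structural observation is that the three-branch formula defining $F_{n,m}$ uses only the first $m$ coordinates of its argument; because $m\le n-1$, this data is preserved by $P_{n-1}$, so $F_{n,m}(P_{n-1}x)=F_{n-1,m}(P_{n-1}x)=:G_{n-1,m}(x)$. Then the decomposition
\[F_{n,m}(x)-F_{n,m}(y)=g_n(x)-g_n(y)+G_{n-1,m}(x)-G_{n-1,m}(y),\]
with $g_n(x):=F_{n,m}(x)-F_{n,m}(P_{n-1}x)$, has the feature that a case check on the three branches of $F_{n,m}$ identifies $g_n$ as the map $x\mapsto x_n$ on the identity branch $\sum_{i\le m}\|x_i\|/r_i\le 1$ and as $0$ on the two branches where $F_{n,m}$ has already discarded the $n$-th coordinate. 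The triangle inequality then reduces the lemma to the single estimate $\|g_n(x)-g_n(y)\|\le (r_nA_n/r_{n-1})\|x-y\|$, since the other summand is precisely the second term on the right-hand side.

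The main obstacle is this $g_n$-estimate, which requires playing the $K_n$-defining inequality on $x$ against its violation on $y$. I would reduce by symmetry to $\sum_{i\le m}\|y_i\|/r_i>1$; since $y\in E_{n,m}=P_m(X)\cup K_n$ and membership in $K_n$ would force $\sum_{i\le n}\|y_i\|/r_i\le 1$, this pins $y$ into $P_m(X)$, so $y_n=0$ and $g_n(y)=0$. If $x\notin K_n$ either, then $x_n=0$ as well and the bound is trivial; otherwise $x\in K_n$, and combining the $K_n$-inequality $\|x_n\|/r_n\le 1-\sum_{i<n}\|x_i\|/r_i$ with $\sum_{i<n}\|y_i\|/r_i\ge\sum_{i\le m}\|y_i\|/r_i>1$ (which uses $m\le n-1$) yields $\|x_n\|/r_n\le\sum_{i<n}(\|y_i\|-\|x_i\|)/r_i$. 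The norm triangle inequality, the uniform bound $1/r_i\le 1/r_{n-1}$, and the $\ell_1$-to-norm constant $A_n$ then deliver $\|x_n\|\le (r_nA_n/r_{n-1})\|x-y\|$, as required. The complementary case in which both sums are $\le 1$ makes $F_{n,m}$ act as the identity on $x,y$; the claimed inequality then collapses to a comparison absorbed into the same constant provided $(r_n)$ does not decrease faster than $r_{n-1}/A_n$, a mild size condition compatible with the intended application.
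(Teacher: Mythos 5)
Your argument coincides with the paper's proof in all essentials: the choice $A_m=2m$ and the resulting $\ell_1$-to-norm comparison, the estimate for $f_m$, the decomposition $F_{n,m}=g_n+G_{n-1,m}$ with $G_{n-1,m}=F_{n-1,m}\circ P_{n-1}$, and, in the main case, the key step of playing the constraint $\sum_{i=1}^{n}\|x_i\|/r_i\le 1$ (valid because $x\in K_n$) against its violation by $y$ at level $m\le n-1$ to obtain $\|x_n\|\le \frac{r_nA_n}{r_{n-1}}\|x-y\|$. All of that is correct and matches the paper.

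The gap is in your final sentence, concerning the case $\sum_{i=1}^{m}\|x_i\|/r_i\le1$ and $\sum_{i=1}^{m}\|y_i\|/r_i\le1$. There the estimate you need is $\|x_n-y_n\|\le \frac{r_nA_n}{r_{n-1}}\|x-y\|$, and you rightly observe that this forces $r_n/r_{n-1}$ to be bounded \emph{below} by roughly $1/A_n$. But that is the opposite of the regime in which the lemma is applied: Theorem \ref{theoFDDcompact} and Remark \ref{quotient} require $r_n/r_{n-1}\le q_n=\frac{1}{n2^{n+1}}$, far below $1/A_n=\frac{1}{2n}$. Concretely, for $x=r_nb_n$ with $b_n\in S_{X_n}$ and $y=0$ (both lie in $K_n\subset E_{n,m}$ and both satisfy the level-$m$ constraint), the asserted inequality reads $r_n\le \frac{r_n^2A_n}{r_{n-1}}$, since $P_{n-1}x=P_{n-1}y=0$; this fails whenever $r_n/r_{n-1}<1/A_n$. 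So the condition you invoke is not ``compatible with the intended application''---it contradicts it, and this case cannot be closed as you propose. You should know that the paper's own treatment of this case is also deficient: it only records that $F_{n,m}$ acts as the identity on both points, which gives $\|F_{n,m}(x)-F_{n,m}(y)\|=\|x-y\|$ but not the stated inequality, whose second right-hand term is merely $\|P_{n-1}(x-y)\|$. The downstream argument nevertheless goes through because every application of the lemma in the proof of Theorem \ref{theoFDDcompact} involves a pair of points at least one of which violates the level-$m$ constraint---exactly the case that both you and the paper handle correctly.
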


\begin{prop}\label{proplip2}
For every $j\in\N$ and $x,y\in P_j(X)$ the following implications hold true:
\begin{equation}\label{1ineqFDD}\text{if }\sum\limits_{i=1}^{j-1}\frac{||x_i||}{r_i},\sum\limits_{i=1}^{j-1}\frac{||y_i||}{r_i}\le1\Rightarrow||F_{j,j}(x)-F_{j,j}(y)||\le \left( 5+\frac{r_jA_{j-1}}{r_{j-1}} \right)||x-y||,\end{equation}
\begin{equation}\label{2ineqFDD}\text{if }\sum\limits_{i=1}^{j-1}\frac{||y_i||}{r_i}>1\Rightarrow||F_{j,j}(x)-F_{j,j}(y)||\le \left( 1+\frac{r_jA_{j-1}}{r_{j-1}} \right)||x-y||.\end{equation}
\begin{proof}
First we prove \eqref{1ineqFDD} case by case:

If $\sum\limits_{i=1}^{j}\frac{||x_i||}{r_i}>1$ and $\sum\limits_{i=1}^{j}\frac{||y_i||}{r_i}>1$, then from the definition of $f_j$ we know that $f_j(y)<||y_j||$ so
$$\begin{aligned}&\frac{\big|\big|x_j||y_j||f_j(x)-y_j||x_j||f_j(y) \big|\big|}{||x_j||\,||y_j||}\\
&\le\frac{\big|\big|x_j||y_j||f_j(x)-x_jf_j(y)||x_j||\,\big|\big|+\big|\big|x_jf_j(y)||x_j||-y_j||x_j||f_j(y) \big|\big|}{||x_j||\,||y_j||}\\
&=\frac{\big|f_j(x)||y_j||-f_j(y)||x_j||\,\big|}{||y_j||}+\frac{||x_j-y_j||f_j(y)}{||y_j||}\\
&\le\frac{\big|f_j(x)||y_j||-f_j(y)||y_j||\,\big|+\big|f_j(y)||y_j||-f_j(y)||x_j||\,\big|}{||y_j||}+||x_j-y_j||\\
&=|f_j(x)-f_j(y)|+\frac{f_j(y)||x_j-y_j||}{||y_j||}+||x_j-y_j||\\
&\le \frac{r_jA_{j-1}}{r_{j-1}}||x-y||+2||x_j-y_j||\le\left(4+\frac{r_jA_{j-1}}{r_{j-1}}\right)||x-y||.\end{aligned}$$
Hence, we have that
$$\begin{aligned} &||F_{j,j}(x)-F_{j,j}(y)||\\
&\le ||P_{j-1}(x-y)||+||(F_{j,j}-P_{j-1})(x)-(F_{j,j}-P_{j-1})(y)||\\
&=||P_{j-1}(x-y)||+\left|\left| \frac{x_j}{||x_j||}f_j(x)-\frac{y_j}{||y_j||}f_j(y) \right|\right|\\
&=||P_{j-1}(x-y)||+\frac{\big|\big|x_j||y_j||f_j(x)-y_j||x_j||f_j(y) \big|\big|}{||x_j||\,||y_j||}\\
&\le \left(5+\frac{r_jA_{j-1}}{r_{j-1}}\right)||x-y||. \end{aligned}$$

If $\sum\limits_{i=1}^{j}\frac{||x_i||}{r_i}\le1$ and $\sum\limits_{i=1}^{j}\frac{||y_i||}{r_i}>1$, then from the definition of $f_j$ it follows that $||x_j||\le f_j(x)$ and $||y_j||-f_j(y)>0$ so
$$\begin{aligned}\frac{\big|\big|x_j||y_j||-y_jf_j(y)\big|\big|}{||y_j||}&\le\frac{\big|\big|x_j||y_j||-y_j||y_j||\,\big|\big|+\big|\big|y_j||y_j||-y_jf_j(y)\big|\big|}{||y_j||}\\
&=||x_j-y_j||+\big(||y_j||-f_j(y)\big)\\
&=||x_j-y_j||+\big(||y_j||-||x_j||\big)+\big(||x_j||-f_j(y)\big)\\
&\le2||x_j-y_j||+\big(f_j(x)-f_j(y)\big)\\
&\le\left(4+\frac{r_jA_{j-1}}{r_{j-1}}\right)||x-y||.
\end{aligned}$$
Hence, we deduce that
$$\begin{aligned} &||F_{j,j}(x)-F_{j,j}(y)||\\
&\le ||P_{j-1}(x-y)||+||(F_{j,j}-P_{j-1})(x)-(F_{j,j}-P_{j-1})(y)||\\
&=||P_{j-1}(x-y)||+\left|\left| x_j-\frac{y_j}{||y_j||}f_j(y) \right|\right|\\
&=||P_{j-1}(x-y)||+\frac{\big|\big|x_j||y_j||-y_jf_j(y) \big|\big|}{||y_j||}\\
&\le \left(5+\frac{r_jA_{j-1}}{r_{j-1}}\right)||x-y||. \end{aligned}$$

The case when  $\sum\limits_{i=1}^{j}\frac{||x_i||}{r_i}\le1$ and $\sum\limits_{i=1}^{j}\frac{||y_i||}{r_i}\le1$ is trivially true because $F_{j,j}$ acts as the identity, so we have proven \eqref{1ineqFDD}.\\

Finally, we prove \eqref{2ineqFDD} distinguishing between 3 different cases:\\

If $\sum\limits_{i=1}^{j-1}\frac{||x_i||}{r_i}\ge1$ then $F_{j,j}(x)-F_{j,j}(y)=P_{j-1}(x-y)$ so it is straightforward.\\

If $\sum\limits_{i=1}^{j-1}\frac{||x_i||}{r_i}<1<\sum\limits_{i=1}^{j}\frac{||x_i||}{r_i}$ then, as $1\le\sum\limits_{i=1}^{j-1}\frac{||y_i||}{r_i}$, it holds that
$$\begin{aligned}|f_j(x)|&=r_j\left(1-\sum\limits_{i=1}^{j-1}\frac{||x_i||}{r_i}\right)\\
&\le r_j\left(\sum\limits_{i=1}^{j-1}\frac{||y_i||-||x_i||}{r_i}\right)=|f_j(x)-f_j(y)|\le \frac{r_jA_{j-1}}{r_j}||x-y||.\end{aligned}$$
Hence,
$$\begin{aligned}||F_{j,j}(x)-F_{j,j}(y)||&=\left|\left| P_{j-1}(x-y)+\frac{x_j}{||x_j||}f_j(x) \right|\right|\\
&\le \left(1+\frac{r_jA_{j-1}}{r_{j-1}}\right)||x-y||.\end{aligned}$$

Finally, if $\sum\limits_{i=1}^{j}\frac{||x_i||}{r_i}\le1$, then
$$\begin{aligned}||x_j||&\le r_j\left( 1-\sum\limits_{i=1}^{j-1}\frac{||x_i||}{r_i} \right)\le r_j\left(\sum\limits_{i=1}^{j-1}\frac{||y_i||-||x_i||}{r_i}\right)=|f_j(x)-f_j(y)|\\
&\le \frac{r_jA_{j-1}}{r_{j-1}}||x-y||,\end{aligned}$$
and so,
$$\begin{aligned}||F_{j,j}(x)-F_{j,j}(y)||&=\left|\left| P_{j-1}(x-y)+x_j \right|\right|\\
&\le \left(1+\frac{r_jA_{j-1}}{r_{j-1}}\right)||x-y||.\end{aligned}$$
\end{proof}
\end{prop}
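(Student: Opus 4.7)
The plan is to proceed by case analysis according to which of the three branches of the piecewise definition of $F_{j,j}$ applies at the points $x$ and $y$. Recall that the first branch (where $F_{j,j}$ acts as the identity) occurs when $\sum_{i=1}^{j}\|x_i\|/r_i \leq 1$, the second branch (where $F_{j,j}=P_{j-1}$) requires $\sum_{i=1}^{j-1}\|x_i\|/r_i \geq 1$, and the third (piecewise linear) branch occurs in between. Throughout, I would rely on the Lipschitz estimate $|f_j(x)-f_j(y)| \leq \frac{r_jA_{j-1}}{r_{j-1}}\|x-y\|$ from Lemma \ref{lemmalip2}, the monotonicity of the FDD (so that $\|P_{j-1}(x-y)\| \leq \|x-y\|$ and $\|x_j-y_j\| \leq 2\|x-y\|$), and the elementary inequalities $\|x_j\| \leq f_j(x)$ in the "identity" branch and $\|y_j\| \geq f_j(y)$ in the "piecewise linear" branch.

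For inequality \eqref{1ineqFDD}, the hypothesis $\sum_{i=1}^{j-1}\|x_i\|/r_i,\sum_{i=1}^{j-1}\|y_i\|/r_i \leq 1$ excludes the second branch at both points, leaving three subcases. The subcase where both points lie in the first branch is immediate. The main work is the subcase where both points are in the third branch: there the difference $\|F_{j,j}(x)-F_{j,j}(y)\|$ is bounded by $\|P_{j-1}(x-y)\|$ plus $\bigl\|\frac{x_j}{\|x_j\|}f_j(x)-\frac{y_j}{\|y_j\|}f_j(y)\bigr\|$, and to handle the latter I would rewrite it as $\frac{1}{\|x_j\|\|y_j\|}\bigl\|x_j\|y_j\|f_j(x)-y_j\|x_j\|f_j(y)\bigr\|$, insert the intermediate term $x_j\|x_j\|f_j(y)$, and apply the triangle inequality. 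This yields a piece controlled by $|f_j(x)-f_j(y)|$ plus a piece controlled by $\|x_j-y_j\|$ after using $f_j(y) \leq \|y_j\|$. The mixed subcase (first branch at one point, third at the other) uses the same type of insertion trick but is simpler, and the bookkeeping of the $2\|x_j-y_j\|$ contributions together with the Lipschitz constant of $f_j$ and the contribution of $P_{j-1}$ produces the constant $5+\frac{r_jA_{j-1}}{r_{j-1}}$.

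For inequality \eqref{2ineqFDD}, the hypothesis $\sum_{i=1}^{j-1}\|y_i\|/r_i > 1$ forces $F_{j,j}(y)=P_{j-1}(y)$. I would split again according to the branch at $x$. If the second branch applies at $x$, the difference is just $P_{j-1}(x-y)$ and the bound is trivial. If the third branch applies, then $\|F_{j,j}(x)-F_{j,j}(y)\| \leq \|P_{j-1}(x-y)\| + f_j(x)$, and since $f_j(y)\leq 0 < f_j(x)$ we have $f_j(x) \leq |f_j(x)-f_j(y)| \leq \frac{r_jA_{j-1}}{r_{j-1}}\|x-y\|$. If the first branch applies at $x$, then similarly $\|x_j\| \leq f_j(x) \leq |f_j(x)-f_j(y)|$, and the same estimate works.

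The main obstacle is the subcase of \eqref{1ineqFDD} where both points are in the third branch: the normalizing factors $1/\|x_j\|$ and $1/\|y_j\|$ force the careful algebraic rearrangement above, and the bookkeeping to extract exactly the constant $5$ (rather than a larger universal constant) is where the argument is delicate; every other subcase follows from a direct one-line estimate once the right branch is identified.
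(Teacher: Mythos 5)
Your proposal is correct and follows essentially the same route as the paper's proof: the identical case split according to the branches of $F_{j,j}$, the same insertion of the intermediate term $x_j\|x_j\|f_j(y)$ in the main subcase, the same use of $f_j(y)\le\|y_j\|$ and $\|x_j\|\le f_j(x)$, and the same bookkeeping ($2\|x_j-y_j\|\le 4\|x-y\|$ plus $\|P_{j-1}(x-y)\|\le\|x-y\|$) yielding the constant $5$. For \eqref{2ineqFDD} your observation that $f_j(y)\le 0<f_j(x)$ is exactly the inequality the paper writes out explicitly, so the two arguments coincide.
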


\begin{theorem}\label{theoFDDcompact}
There is a sequence $(q_n)_{n\in\N}\subset \R^+$ such that for every Banach space $X$ with a monotone FDD $(X_n)$, and every decreasing sequence $(r_n)\subset \R^+$ satisfying $\frac{r_n}{r_{n-1}}\le q_n$, the set
$$K=\overline{\co}\left(\bigcup\limits_{n\in\N}r_nB_{X_n}\right)$$
is a compact Lipschitz retract of $X$.
\begin{proof}
Consider some $\delta>0$ and $(\delta_n)_{n\in\N}\subset\R^+$ such that
$$\prod\limits_{n\in\N}(1+\delta_n)\le 1+\delta,$$
and consider a sequence $(a_n)_{n\in\N}\subset\R^+$ such that the sequence given by
$$(\alpha_n)_{n\in\N}:=\left(\sum\limits_{k=n+1}^\infty a_kA_k\right)_{n\in\N}$$ verifies that
$$\alpha_n\le\frac{\delta_n}{2}\;\;\;\;\;\;\forall n\in\N.$$
Now we set
$$q_n=\min\left\{a_n,\frac{\delta_n}{2A_{n-1}}\right\}.$$
Suppose that $(r_n)$ is a sequence as in the statement of the Theorem. Given any $n\in\N$ we define the following retraction
$$F_n=F_{n,1}\circ\cdots\circ F_{n,n}\circ P_n:X\rightarrow K_n.$$
For a given $x\in X$ and $n\in\N$ we set $\widetilde{m}=\max\Big\{k\in\{1,\dots,n+1\}\;:\;\sum\limits_{i=1}^{k-1}\frac{||x_i||}{r_i}\le1\Big\}$ so that it is possible to compute $F_n(x)$ as
$$F_n(x)=\begin{cases}P_{\widetilde{m}-1}(x)+\frac{x_{\widetilde{m}}}{||x_{\widetilde{m}}||}f_{\widetilde{m}}(x),\;\;\;&\text{if }\widetilde{m}\le n,\\
P_n(x)&\text{if }\widetilde{m}=n+1.\end{cases}$$
Now, if $x,y\in P_n(X)$, we claim that $||F_n(x)-F_n(y)||\le5(1+\delta)$. Indeed, let us consider
$$m=\max\left\{k\in\{1,\dots,n+1\}:\sum\limits_{i=1}^{k-1}\frac{||x_i||}{r_i}\le1,\sum\limits_{i=1}^{k-1}\frac{||y_i||}{r_i}\le1\right\}.$$

If $m=n+1$ then $F_n(x)-F_n(y)=x-y$.\\

If  $m=n$ then $F_{n}(x)=F_{n,n}(x)$ and $F_n(y)=F_{n,n}(y)$ and we use Proposition \ref{proplip2} to finish this case.\\

If $m=n-1$ then $F_n(x)=F_{n,n-1}(F_{n,n}(x))$ and $F_n(y)=F_{n,n-1}(F_{n,n}(y))$, and we know from the definition of $m$ that $x$ or $y$ verifies \eqref{2ineqFDD} of Proposition \ref{proplip2} for $j=n$. Hence, using Lemma \ref{lemmalip2} together with Proposition \ref{proplip2} we get that
$$\begin{aligned} ||F_n(x)-F_n(y)||&=||F_{n,n-1}(F_{n,n}(x))-F_{n,n-1}(F_{n,n}(y))||\\
&\le \frac{r_nA_n}{r_{n-1}}||F_{n,n}(x)-F_{n,n}(y)||\\
&+\big|\big|(F_{n-1,n-1}\circ P_{n-1})(F_{n,n}(x))-(F_{n-1,n-1}\circ P_{n-1})(F_{n,n}(y))\big|\big|\\
&\le\left(5+\frac{r_{n-1}A_{n-2}}{r_{n-2}}+\frac{r_nA_n}{r_{n-1}}\right)||F_{n,n}(x)-F_{n,n}(y)||\\
&\le\left(5+\frac{r_{n-1}A_{n-2}}{r_{n-2}}+\frac{r_nA_n}{r_{n-1}}\right)\left( 1+\frac{r_nA_{n-1}}{r_{n-1}} \right)||x-y||\\
&\le\left( 5+\frac{\delta_{n-1}}{2}+\alpha_{n-1} \right)\left( 1+\frac{\delta_n}{2} \right)||x-y||\le5(1+\delta)||x-y||.
\end{aligned}$$

Otherwise, if $m\le n-2$ then
$$F_n(x)=F_{n,m}\circ\cdots\circ F_{n,n}(x)\text{ and }F_n(y)=F_{n,m}\circ\cdots\circ F_{n,n}(y).$$
Also, from the definition of $m$ we get that, if $n\ge p\ge m+2$, then the point $F_{n,p}\circ\cdots\circ F_{n,n}(x)$ or the point $F_{n,p}\circ\cdots\circ F_{n,n}(y)$ satisfies \eqref{2ineqFDD} of Proposition \ref{proplip2} for $j= p-1$. Also the point $x$ or the point $y$ satisfies \eqref{2ineqFDD} for $j=n$. Having this in mind and using the same argument as in the previous step, we check that
$$\begin{aligned} &||F_n(x)-F_n(y)||=||F_{n,m}\circ\cdots\circ F_{n,n}(x)-F_{n,m}\circ\cdots\circ F_{n,n}(y)||\\
&\le\left( 5+\frac{r_mA_{m-1}}{r_{m-1}}+\sum\limits_{k=m+1}^{n}\frac{r_kA_k}{r_{k-1}} \right)||F_{n,m+1}\circ\cdots\circ F_{n,n}(x)-F_{n,m+1}\circ\cdots\circ F_{n,n}(y)||\\
&\le\left(5+\frac{r_mA_{m-1}}{r_{m-1}}+\sum\limits_{k=m+1}^{n}\frac{r_kA_k}{r_{k-1}}\right)\left( 1+ \frac{r_{m+1}A_m}{r_m} +\sum\limits_{k=m+2}^n\frac{r_kA_k}{r_{k-1}} \right)\\
&\cdot||F_{n,m+2}\circ\cdots\circ F_{n,n}(x)-F_{n,m+2}\circ\cdots\circ F_{n,n}(y)||\le\cdots\\
&\cdots\le \left(5+\frac{r_mA_{m-1}}{r_{m-1}}+\sum\limits_{k=m+1}^{n}\frac{r_kA_k}{r_{k-1}}\right)\left(\prod\limits_{j=m+1}^{n-1}\left( 1+ \frac{r_{j}A_{j-1}}{r_{j-1}} +\sum\limits_{k=j+1}^n\frac{r_kA_k}{r_{k-1}} \right)\right)\\
&\cdot||F_{n,n}(x)-F_{n,n}(y)||\\
&\le 5\left(\prod\limits_{j=m}^{n-1}\left( 1+ \frac{r_{j}A_{j-1}}{r_{j-1}} +\sum\limits_{k=j+1}^n\frac{r_kA_k}{r_{k-1}} \right)\right)\left(1+\frac{r_nA_{n-1}}{r_{n-1}}\right)||x-y||\\
&\le5\left(\prod\limits_{j=m}^{n-1}\left( 1+\frac{\delta_j}{2}+\alpha_j \right)\right)\left( 1+\frac{\delta_n}{2} \right)||x-y||\le 5\prod\limits_{j=m}^n(1+\delta_j)||x-y||\\
&\le5(1+\delta)||x-y||.
\end{aligned}$$

It is easy to see that $\forall x\in P_n(X)$, if $k>n$ then $F_k(x)=F_n(x)$ so we may define the following map
$$F:\bigcup\limits_{n\in\N}P_n(X)\rightarrow\co\left(\bigcup\limits_{n\in\N}r_nB_{X_n}\right),$$
$$F(x)=\lim\limits_{n\to\infty}F_n(x),$$
which is a $5(1+\delta)$-Lipschitz retraction. Considering now $R:X\rightarrow K$ as the extension of $F$ to the whole $X$, we are done.
\end{proof}
\end{theorem}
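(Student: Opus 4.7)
The strategy is to use the maps $F_{n,m}$ built in Lemma \ref{lemmalip2} and Proposition \ref{proplip2} as the atomic ingredients, compose them to obtain truncated retractions $F_n\colon X\to K_n$, and show that if the ratios $r_n/r_{n-1}$ decay fast enough, then the composition $F_n=F_{n,1}\circ\cdots\circ F_{n,n}\circ P_n$ is uniformly Lipschitz in $n$ and stabilizes pointwise on $\bigcup_n P_n(X)$. The limit extends by uniform continuity to a Lipschitz retraction $R\colon X\to K$, which is what we need.

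First I would fix the quantitative scheme. Choose a target excess $\delta>0$ and a sequence $(\delta_n)$ with $\prod_n(1+\delta_n)\le 1+\delta$. Next, select $(a_n)$ decaying so fast that the tails $\alpha_n=\sum_{k>n}a_kA_k$ satisfy $\alpha_n\le\delta_n/2$; this is possible because the numbers $A_k=2k$ from Lemma \ref{lemmalip2} grow only polynomially. Then set
$$q_n=\min\!\left\{a_n,\;\frac{\delta_n}{2A_{n-1}}\right\}.$$
With these choices the two crucial quantities appearing in the estimates of Lemma \ref{lemmalip2} and Proposition \ref{proplip2}, namely $r_jA_{j-1}/r_{j-1}$ and $\sum_{k>j}r_kA_k/r_{k-1}$, are each bounded by $\delta_j/2$ and by $\alpha_j\le\delta_j/2$, respectively.

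The main technical step is the uniform Lipschitz bound $\|F_n(x)-F_n(y)\|\le 5(1+\delta)\|x-y\|$ for $x,y\in P_n(X)$. Given such a pair, I would locate the smallest index $m$ at which at least one of the partial sums $\sum_{i\le m-1}\|x_i\|/r_i$ or $\sum_{i\le m-1}\|y_i\|/r_i$ exceeds $1$; by Lemma \ref{lemmalip2}, the layers $F_{n,k}$ with $k>m$ act essentially as the identity on the relevant coordinates plus a small perturbation controlled by $r_kA_k/r_{k-1}$, so they contribute multiplicatively factors of the form $(1+\delta_k/2+\alpha_k)\le 1+\delta_k$ to the Lipschitz constant. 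Proposition \ref{proplip2} then provides the only place where the large constant $5$ enters, namely in the layer $F_{n,m}$ where the two partial sums first diverge in behaviour. Telescoping the product of the Lipschitz constants of the successive layers and using $\prod(1+\delta_j)\le 1+\delta$ yields the bound $5(1+\delta)$.

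Once the uniform Lipschitz estimate is established on each $P_n(X)$, I would observe the compatibility $F_k(x)=F_n(x)$ for $x\in P_n(X)$ and $k\ge n$, which is immediate from the formulas of $F_{k,m}$ acting as the identity when $P_n(x)=x$ in the relevant coordinates. Consequently the pointwise limit $F(x)=\lim_n F_n(x)$ is well-defined on the dense subset $\bigcup_n P_n(X)$, is $5(1+\delta)$-Lipschitz, maps into $\co\bigl(\bigcup_n r_nB_{X_n}\bigr)$, and fixes $K$ pointwise. Extending $F$ by uniform continuity to all of $X$ gives the desired Lipschitz retraction $R\colon X\to K$, and compactness of $K$ follows from the standard fact that the closed convex hull of a sequence of sets whose diameters tend to zero is compact. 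The main obstacle throughout is the combinatorial bookkeeping required to keep the multiplicative constants from exploding as $m$ varies; this is exactly why the choice of $q_n$ must dominate both $a_n$ (to tame the tail sum $\alpha_n$ coming from Lemma \ref{lemmalip2}) and $\delta_n/(2A_{n-1})$ (to tame the single-layer factor coming from Proposition \ref{proplip2}).
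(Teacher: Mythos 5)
Your proposal follows essentially the same route as the paper's proof: the identical choice of $q_n=\min\{a_n,\delta_n/(2A_{n-1})\}$ with tails $\alpha_n\le\delta_n/2$, the same composed truncations $F_n=F_{n,1}\circ\cdots\circ F_{n,n}\circ P_n$, the same telescoping in which the factor $5$ from Proposition \ref{proplip2} enters exactly once (at the first layer where the two partial sums diverge) while the remaining layers contribute factors $(1+\delta_j/2+\alpha_j)$ via Lemma \ref{lemmalip2}, and the same stabilization-plus-density extension to $R\colon X\to K$. The plan is correct as an outline of the paper's argument; only the detailed case analysis ($m=n+1$, $m=n$, $m=n-1$, $m\le n-2$) remains to be written out.
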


\begin{remark}\label{quotient}
We may actually choose $q_n=\frac{1}{n2^{n+1}}$. This arises from choosing $\delta_n=2^{-n+1}$ and $a_k=\frac{1}{k2^{k+1}}$, so that $\alpha_n=2^{-n}$.
\end{remark}

Notice that the restriction of the previous retraction $R$ to one of the blocks of the FDD is the radial projection. Hence, it is not possible to obtain an estimate for the Lipschitz norm of $R$ better than 2 for general FDD spaces.\\
Next, we are going to treat the special case when the blocks of the FDD are of dimension 1, that is when $X$ has a Schauder basis, which leads to a much
 better estimate on the Lipschitz norm of the retraction.

From now on, $dim X_n=1$ for every $n\in\N$ and $(e_n,e_n^*)_{n\in\N}$ is a monotone Schauder basis in $X$ such that $||e_n||=1$  and $\langle e_n \rangle=X_n$. From now on $r_n\in\R^+$ is a decreasing sequence, $(P_n)$ is the sequence of projections of the basis and we keep denoting $x_i=(P_i-P_{i-1})(x)$ for every $x\in X$. We are going to keep the previous definition for $F_{n,m}$ and $f_m$, as well as for $E_{n,m}$. The main difference between this case and the general one is stated in Proposition \ref{FDD1D}.

\begin{prop}\label{FDD1D}
$$F_{m,m}\;\text{ is }\;\left(1+\frac{r_mA_{m-1}}{r_{m-1}}\right)\text{-Lipschitz}\;\;\;\;\;\forall m\in \N.$$
\begin{proof}
We are going to prove it case by case. We have already proved in Proposition \ref{proplip2} the case when $x,y\in E_{m,m}=P_m(X)$ are such that $\sum\limits_{i=1}^{m-1}\frac{||y_i||}{r_i}>1$ or $\sum\limits_{i=1}^{m-1}\frac{||x_i||}{r_i}>1$ , so let us assume throughout all the proof that $\sum\limits_{i=1}^{m-1}\frac{||x_i||}{r_i}\le1$ and $\sum\limits_{i=1}^{m-1}\frac{||y_i||}{r_i}\le1$.

If $\sum\limits_{i=1}^{m}\frac{||x_i||}{r_i}\le 1$ and $\sum\limits_{i=1}^{m}\frac{||y_i||}{r_i}\le1$ then $F_{m,m}(x)-F_{m,m}(y)=x-y$ so it is straightforward that $||F_{m,m}(x)-F_{m,m}(y)||=||x-y||$.\\

If $\sum\limits_{i=1}^{m}\frac{||x_i||}{r_i}\le1$ and $\sum\limits_{i=1}^{m}\frac{||y_i||}{r_i}>1$, then we split this case into 2 different subcases. For these subcases we are going to set
$$t=\frac{e_m^*(x)-\frac{e_m^*(y)}{|e_m^*(y)|}f_m(y)}{e_m^*(x)-e_m^*(y)}$$
whenever $e_m^*(x)\neq e_m^*(y)$ and $t=\infty$ otherwise.\\

Subcase $t\notin[0,1]$.\\
In this subcase $|e_m^*(y)|=||y_m||>f_m(y)\ge0$ so then $e^*_m(x)\neq0$ because otherwise $t=\frac{f_m(y)}{|e_m^*(y)|}\in[0,1]$. We claim that in this subcase $\frac{e_m^*(x)}{|e_m^*(x)|}=\frac{e_m^*(y)}{|e_m^*(y)|}$. Indeed, if not, multiplying and dividing $t$ by $\frac{e^*_m(x)}{|e^*_m(x)|}=\frac{|e^*_m(x)|}{e^*_m(x)}=-\frac{|e^*_m(y)|}{e^*_m(y)}$, it is easy to see that
$$t=\frac{|e_m^*(x)|+f_m(y)}{|e_m^*(x)|+|e_m^*(y)|}\in[0,1],$$
which is not possible. This means that $\frac{e_m^*(x)}{|e_m^*(x)|}=\frac{e_m^*(y)}{|e_m^*(y)|}$ so now it can be easily seen that
$$t=\frac{|e_m^*(x)|-f_m(y)}{|e_m^*(x)|-|e_m^*(y)|}.$$
As $|e_m^*(x)|-f_m(y)\ge |e_m^*(x)|-|e_m^*(y)|$ we claim that $|e_m^*(x)|-f_m(y)\ge0$. In fact, we check case by case and obtain the following scheme
$$
\begin{cases}
\text{if }\;t=\infty\;&\Rightarrow\;|e_m^*(x)|-|e_m^*(y)|=0\;\Rightarrow\;|e_m^*(x)|-f_m(y)\ge0,\\
\text{if }\;t<0\;&\Rightarrow\;|e_m^*(x)|-|e_m^*(y)|<0\;\Rightarrow\;|e_m^*(x)|-f_m(y)>0,\\
\text{if }\;t>1\;&\Rightarrow\;|e_m^*(x)|-|e_m^*(y)|>0\;\Rightarrow\;|e_m^*(x)|-f_m(y)>0.
\end{cases}
$$
Then,
$$\left|e_m^*(x)-\frac{e_m^*(y)}{|e_m^*(y)|}f_m(y)\right|=|e_m^*(x)|-f_m(y)\le |f_m(x)-f_m(y)|\le \frac{r_mA_{m-1}}{r_{m-1}}||x-y||,$$
and we have
$$\begin{aligned}||F_{m,m}(x)-F_{m,m}(y)||&=\left|\left|P_{m-1}(x-y)+\left(x_m-\frac{y_m}{||y_m||}f_m(y)\right)\right|\right|\\
&\le||x-y||+\bigg|e_m^*(x)-\frac{e_m^*(y)}{|e_m^*(y)|}f_m(y) \bigg|\\
&\le||x-y||+\frac{r_mA_{m-1}}{r_{m-1}}||x-y||.\end{aligned}$$

The subcase $t\in[0,1]$  is simpler. Due to the convexity of the norm and the fact that $||P_{m-1}||=1$ we have
$$||F_{m,m}(x)-F_{m,m}(y)||=||P_{m-1}(x-y)+t(x_m-y_m)||\le||x-y||,$$
so we are finally done with both subcases. Now, if $\sum\limits_{i=1}^{m}\frac{||x_i||}{r_i}>1$ and $\sum\limits_{i=1}^{m}\frac{||y_i||}{r_i}>1$ then $||x_m||>0$ and $||y_m||>0$ and we are also spliting this case into 2 subcases for technical reasons: \\

Subcase $\frac{x_m}{||x_m||}=\frac{y_m}{||y_m||}$. Here,
$$\left|\left|\frac{x_m}{||x_m||}f_m(x)-\frac{y_m}{||y_m||}f_m(y)\right|\right|=|f_m(x)-f_m(y)|\le\frac{r_mA_{m-1}}{r_{m-1}}||x-y||,$$
so then
$$\begin{aligned}||F_{m,m}(x)-F_{m,m}(y)||&=\left|\left|P_{m-1}(x-y)+\left(\frac{x_m}{||x_m||}f_m(x)-\frac{y_m}{||y_m||}f_m(y)\right)\right|\right|\\
&\le \left(1+\frac{r_mA_{m-1}}{r_{m-1}}\right)||x-y||.\end{aligned}$$

Finally, for the subcase when $\frac{x_m}{||x_m||}=-\frac{y_m}{||y_m||}$ we consider
$$t=\frac{f_m(x)+f_m(y)}{||x_m||+||y_m||}\in[0,1],$$
and again by convexity,
$$||F_{m,m}(x)-F_{m,m}(y)||=\left|\left|P_{m-1}(x-y)+t(x_m-y_m)\right|\right|\le||x-y||.$$
\end{proof}
\end{prop}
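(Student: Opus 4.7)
The plan is to run a case analysis on the two points $x,y\in P_m(X)$ according to where they sit relative to the three regions in the definition of $F_{m,m}$, exploiting the fact that the block $X_m=\langle e_m\rangle$ is one-dimensional. First I would dispose of the case where $\sum_{i=1}^{m-1}\|y_i\|/r_i>1$ (or the same for $x$) by quoting inequality \eqref{2ineqFDD} of Proposition \ref{proplip2}, which already gives the desired constant. Thus from now on both partial sums $\sum_{i=1}^{m-1}\|x_i\|/r_i$ and $\sum_{i=1}^{m-1}\|y_i\|/r_i$ are at most $1$, and the behaviour of $F_{m,m}$ is governed by the middle or identity branches.

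The trivial subcase is when both full sums $\sum_{i=1}^m\|x_i\|/r_i$ and $\sum_{i=1}^m\|y_i\|/r_i$ stay under $1$; then $F_{m,m}$ is the identity on both. In the mixed subcase, say $\sum_{i=1}^m\|x_i\|/r_i\leq 1<\sum_{i=1}^m\|y_i\|/r_i$, the increment $F_{m,m}(x)-F_{m,m}(y)$ equals $P_{m-1}(x-y)+\bigl(x_m-\tfrac{y_m}{\|y_m\|}f_m(y)\bigr)$. Writing $x_m=e_m^*(x)e_m$ and using that $e_m^*(y)\ne 0$ together with the one-dimensionality, I would introduce the scalar
\[
t=\frac{e_m^*(x)-\mathrm{sgn}(e_m^*(y))\,f_m(y)}{e_m^*(x)-e_m^*(y)}
\]
(when the denominator is zero, treat $t=\infty$). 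If $t\in[0,1]$ then the offending term equals $t(x_m-y_m)$, so
\[
F_{m,m}(x)-F_{m,m}(y)=(1-t)P_{m-1}(x-y)+tP_m(x-y),
\]
and monotonicity of the basis gives the convex-combination bound $\|x-y\|$. If $t\notin[0,1]$, a short sign argument (in each of the subcases $t=\infty$, $t<0$, $t>1$) forces the signs of $e_m^*(x)$ and $e_m^*(y)$ to coincide, which together with $f_m(y)\le\|y_m\|$ collapses $|e_m^*(x)-\mathrm{sgn}(e_m^*(y))f_m(y)|$ to $|e_m^*(x)|-f_m(y)\leq f_m(x)-f_m(y)$; using Lemma \ref{lemmalip2} this is at most $\tfrac{r_mA_{m-1}}{r_{m-1}}\|x-y\|$, giving the desired estimate after adding $\|P_{m-1}(x-y)\|\leq\|x-y\|$.

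The remaining subcase is when both sums exceed $1$, so both points lie in the radial-correction branch. Here $F_{m,m}(x)-F_{m,m}(y)=P_{m-1}(x-y)+\bigl(\tfrac{x_m}{\|x_m\|}f_m(x)-\tfrac{y_m}{\|y_m\|}f_m(y)\bigr)$, and since $X_m$ is one-dimensional the unit vectors $x_m/\|x_m\|$ and $y_m/\|y_m\|$ either coincide or differ by a sign. In the aligned case the residual reduces to $|f_m(x)-f_m(y)|$, handled by Lemma \ref{lemmalip2}. In the opposite-sign case, set $t=\bigl(f_m(x)+f_m(y)\bigr)/(\|x_m\|+\|y_m\|)\in[0,1]$, which lets me rewrite the residual as $t(x_m-y_m)$ and close the argument by the same convexity bound as above.

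The main obstacle, and the step most specific to $\dim X_m=1$, is the mixed subcase: one must correctly identify the scalar $t$ that converts the affine correction $\tfrac{x_m}{\|x_m\|}f_m(x)-\tfrac{y_m}{\|y_m\|}f_m(y)$ into a convex combination of $x-y$ and $P_{m-1}(x-y)$, and handle the residual region $t\notin[0,1]$ by an ad hoc sign analysis. The higher-dimensional case in Proposition \ref{proplip2} loses a factor of roughly $5$ precisely because this one-dimensional convexity trick is unavailable there, so the whole point of the present proposition is to deploy it systematically.
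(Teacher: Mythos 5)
Your proposal is correct and follows essentially the same route as the paper's proof: the same reduction via inequality \eqref{2ineqFDD} of Proposition \ref{proplip2}, the same scalar $t$ in the mixed case with the identical sign analysis for $t\notin[0,1]$ and convexity bound for $t\in[0,1]$, and the same aligned/opposite-sign split with $t=(f_m(x)+f_m(y))/(\|x_m\|+\|y_m\|)$ when both points lie in the radial branch. No gaps to report.
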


\begin{theorem}\label{SB}
For every $\delta>0$ there exists a sequence $(q_n)\subset \R^+$ such that for every Banach space $X$ with a monotone Schauder basis $(e_n)$ and every decreasing sequence $(r_n)\subset\R^+$ satisfying $\frac{r_n}{r_{n-1}}\le q_n$, the set
$$K=\overline{\co}\left(\bigcup\limits_{k\in\N}r_kB_{\langle e_k\rangle}\right)$$
is a $(1+\delta)$-Lipschitz retract of $X$.
\begin{proof}
Consider $(\delta_n)_{n\in\N}\subset\R^+$ such that
$$\prod\limits_{n\in\N}(1+\delta_n)\le 1+\delta,$$
and consider a sequence $(a_n)_{n\in\N}\subset\R^+$ such that the sequence given by
$$(\alpha_n)_{n\in\N}:=\left(\sum\limits_{k=n+1}^\infty a_kA_k\right)_{n\in\N}$$ verifies that
$$\alpha_n\le\frac{\delta_n}{2}\;\;\;\;\;\;\forall n\in\N.$$
Now we set
$$q_n=\min\left\{a_n,\frac{\delta_n}{2A_{n-1}}\right\}.$$
Suppose that $(r_n)$ is as in the statement of the Theorem. We fix $n>1$, so that making use of Lemma \ref{lemmalip2} and Proposition \ref{FDD1D}, the following holds
$$\begin{aligned}
&\text{If }\;m=1\;&\Rightarrow\; &||F_{n,1}||_{Lip}\le1+\sum\limits_{k=2}^n\frac{r_kA_k}{r_{k-1}}.\\
&\text{If }\;m\in\{2,\dots,n-1\}\;&\Rightarrow\; &||F_{n,m}||_{Lip}\le1+ \frac{r_mA_{m-1}}{r_{m-1}} +\sum\limits_{k=m+1}^n\frac{r_kA_k}{r_{k-1}}.\\
&\text{If }\;m=n\;&\Rightarrow\; &||F_{n,n}||_{Lip}\le1+ \frac{r_nA_{n-1}}{r_{n-1}}.
\end{aligned}$$
Let us consider now the composed retraction
$$F_n=F_{n,1}\circ\cdots\circ F_{n,n}\circ P_{n}:X\rightarrow K_n.$$
As in Theorem \ref{theoFDDcompact} it is enough to show that $||F_n||_{Lip}\le1+\delta$:
$$\begin{aligned}||F_n||&_{Lip}\le\prod\limits_{m=1}^n||F_{n,m}||_{Lip}\\
\le&\left( 1+\sum\limits_{k=2}^n\frac{r_kA_k}{r_{k-1}} \right)\left( \prod_{m=2}^{n-1} \left(1+ \frac{r_mA_{m-1}}{r_{m-1}} +\sum\limits_{k=m+1}^n\frac{r_kA_k}{r_{k-1}}\right) \right)\\
&\cdot\left( 1+ \frac{r_nA_{n-1}}{r_{n-1}} \right)\\
\le& (1+\alpha_1)\left(\prod\limits_{m=2}^{n-1}\left( 1+\frac{\delta_m}{2}+\alpha_m \right)\right)\left( 1+\frac{\delta_n}{2} \right)\le \prod\limits_{m=1}^n(1+\delta_m)\le1+\delta.\end{aligned}$$
\end{proof}
\end{theorem}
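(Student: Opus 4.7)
The plan is to run the same composition-of-slice-retractions scheme as in Theorem~\ref{theoFDDcompact}, but exploit the sharper one-dimensional Lipschitz bound supplied by Proposition~\ref{FDD1D} to replace the constant $5$ by $1$ in the seed estimate. The overall structure is: (i) pick the parameters $(q_n)$ so that the telescoping constants are absorbed into a convergent infinite product, (ii) bound $\|F_{n,m}\|_{\mathrm{Lip}}$ by $1$ plus small corrections using Lemma~\ref{lemmalip2} fed with the one-dimensional seed from Proposition~\ref{FDD1D}, (iii) multiply the bounds across $m=1,\dots,n$, and (iv) pass to the limit on the dense set $\bigcup_n P_n(X)$ and extend by uniform continuity.

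First, given $\delta>0$, I would choose $(\delta_n)\subset\R^+$ with $\prod_n(1+\delta_n)\le 1+\delta$, then $(a_n)\subset\R^+$ such that $\alpha_n:=\sum_{k>n}a_kA_k\le\delta_n/2$, and finally set $q_n=\min\{a_n,\delta_n/(2A_{n-1})\}$ exactly as in the FDD case. The key observation is that the decay condition $r_n/r_{n-1}\le q_n$ then forces both $\frac{r_mA_{m-1}}{r_{m-1}}\le\delta_m/2$ and $\sum_{k>m}\frac{r_kA_k}{r_{k-1}}\le\alpha_m\le\delta_m/2$, which are precisely the two ingredients that need to be summed and multiplied.

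Next, I would apply Lemma~\ref{lemmalip2} iteratively to the map $F_{n,m}\colon E_{n,m}\to E_{n,m-1}$. Combined with the one-dimensional seed $\|F_{m,m}\|_{\mathrm{Lip}}\le 1+\frac{r_mA_{m-1}}{r_{m-1}}$ from Proposition~\ref{FDD1D}, this yields
\begin{align*}
\|F_{n,1}\|_{\mathrm{Lip}}&\le 1+\sum_{k=2}^n\tfrac{r_kA_k}{r_{k-1}},\\
\|F_{n,m}\|_{\mathrm{Lip}}&\le 1+\tfrac{r_mA_{m-1}}{r_{m-1}}+\sum_{k=m+1}^n\tfrac{r_kA_k}{r_{k-1}}\qquad(2\le m\le n-1),\\
\|F_{n,n}\|_{\mathrm{Lip}}&\le 1+\tfrac{r_nA_{n-1}}{r_{n-1}}.
\end{align*}
Defining the composed retraction $F_n=F_{n,1}\circ\cdots\circ F_{n,n}\circ P_n$ and multiplying these bounds, each factor is $\le(1+\delta_m/2+\alpha_m)\le(1+\delta_m)$ by the parameter choices, so $\|F_n\|_{\mathrm{Lip}}\le\prod_{m=1}^n(1+\delta_m)\le 1+\delta$, uniformly in $n$.

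Finally, I would use the consistency $F_k(x)=F_n(x)$ whenever $x\in P_n(X)$ and $k>n$ (which follows from the definition of the $F_{n,m}$ and the fact that on $P_n(X)$ the subsequent slice maps act trivially) to define $F=\lim_n F_n$ on the dense subspace $\bigcup_nP_n(X)$, obtaining a $(1+\delta)$-Lipschitz map into $K$. Extension by uniform continuity yields the desired retraction $R\colon X\to K$; checking that $R$ fixes $K$ reduces, by continuity and the density of $\co(\bigcup_k r_kB_{\langle e_k\rangle})$ in $K$, to the identity already being the identity at each finite stage, which is immediate from the definition of $F_{n,m}$ on the set where $\sum_{i\le m}\|x_i\|/r_i\le 1$. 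There is no real obstacle beyond the one already resolved in Proposition~\ref{FDD1D}; the only place that required genuine work was the one-dimensional sign analysis that eliminates the factor of $5$ in the seed bound, and with that in hand the rest is a bookkeeping argument identical in spirit to Theorem~\ref{theoFDDcompact}.
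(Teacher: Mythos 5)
Your proposal is correct and follows essentially the same route as the paper's proof: the same choice of $(\delta_n)$, $(a_n)$, $\alpha_n$ and $q_n$, the same three Lipschitz bounds on $F_{n,1}$, $F_{n,m}$ and $F_{n,n}$ obtained from Lemma \ref{lemmalip2} seeded with Proposition \ref{FDD1D}, the same telescoping product estimate, and the same passage to the limit on $\bigcup_n P_n(X)$ followed by extension by uniform continuity. Nothing essential differs from the argument in the paper.
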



\section{$\pi$-property and compact Lipschitz retractions}

We pass to the results concerning the necessary conditions on the Banach space $X$ so that $X$ admits
a GCCR $K\subset X$. Our methods require a certain quantitative "smallness" condition to be satisfied for $K$.
Under such assumption we show that $X$ must have the $\pi$-property. In fact, our argument makes no use of the convexity of $K$. The crucial
condition is smallness. Our proof uses three main ingredients. The unpublished Milman lemma (communicated to us, with proof, by Bill Johnson)
concerning the projection constant of a finite dimensional subspace of a Banach space,
the finite dimensional  "Lipschitzization"
of coarse Lipschitz maps due to Bourgain (and streamlined by Begun),
 and the averaging of derivatives for finite dimensional
Lipschitz maps. We start with a well-known fact.

 Given $r\in\R^+$, we are going to denote $[r]=\max\{n\in\N\cup\{0\}\;:\;n\le r\}$.

\begin{lemma}\label{MilSecht}
For every $n\in \N$ and $\ep\in(0,1)$, if $(E, ||\cdot||)$ is a Banach space of dimension $n$, then there exists a renorming $|\cdot|$ of $E$ such that $(E,|\cdot|)$ embeds isometrically in $\ell_\infty^N$ where $N=\big[(1+2/\ep)^n\big]$ and
$$|x|\le||x||\le\frac{|x|}{1-\ep}.$$
\begin{proof}
By Lemma 2.6 of \cite{MS86}, we know that there exists an $\ep$-net in $S_{X^*}$ consisting of $N$ points, namely $\{x_1^*,\dots,x_N^*\}$. Just consider the norm $|x|=\max\limits_{i\in\{1,\dots,N\}}x_i^*(x)$.
\end{proof}
\end{lemma}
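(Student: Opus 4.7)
The plan is to extract the isometric embedding into $\ell_\infty^N$ by using the net points as coordinate functionals. First I would invoke the standard volumetric estimate for the dual unit sphere: there exists an $\ep$-net $\{x_1^*,\dots,x_N^*\}\subset S_{E^*}$ of cardinality $N=\big[(1+2/\ep)^n\big]$. This is the content of Lemma 2.6 in \cite{MS86}, and is proved by the classical packing argument (disjoint balls of radius $\ep/2$ centered at the net points must fit inside the concentric ball of radius $1+\ep/2$ in the $n$-dimensional space $E^*$, giving the volumetric ratio $(1+2/\ep)^n$). I would arrange the net to be symmetric under $x^*\mapsto -x^*$ at no cost to the cardinality, so that the quantity defined in the next step is manifestly a norm.

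Next I would set
$$|x|:=\max_{1\le i\le N} x_i^*(x)=\max_{1\le i\le N}|x_i^*(x)|,$$
the two expressions coinciding by symmetry of the net. Positive homogeneity, subadditivity and symmetry are clear; positivity at $x\neq 0$ will follow from the lower bound below. The map $T:E\to \ell_\infty^N$ defined by $T(x)=(x_i^*(x))_{i=1}^N$ is then linear with $\|T(x)\|_{\ell_\infty^N}=|x|$, so it is an isometric embedding of $(E,|\cdot|)$ into $\ell_\infty^N$. The upper bound $|x|\le\|x\|$ is immediate since $\|x_i^*\|=1$. For the lower bound, given $x\in E$ and $\eta>0$ I would pick $x^*\in S_{E^*}$ with $x^*(x)\ge(1-\eta)\|x\|$ and a net point $x_i^*$ with $\|x^*-x_i^*\|\le\ep$; then
$$|x|\ge x_i^*(x)\ge x^*(x)-\ep\|x\|\ge(1-\eta-\ep)\|x\|.$$
Letting $\eta\to 0$ yields $(1-\ep)\|x\|\le|x|$, i.e.\ $\|x\|\le|x|/(1-\ep)$.

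There is no genuine obstacle here; the only minor subtlety is ensuring that a symmetric $\ep$-net in $S_{E^*}$ of cardinality at most $\big[(1+2/\ep)^n\big]$ exists, which is standard (either pair each net point with its negative and re-bound, or observe that the packing argument can be performed directly against the symmetric centrally symmetric norm on $E^*$). This lemma is essentially a finite-dimensional reduction that lets subsequent arguments work with subspaces of $\ell_\infty^N$, for which $N$ depends only on the dimension of the subspace and not on the ambient Banach space.
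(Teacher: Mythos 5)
Your proposal is correct and follows essentially the same route as the paper's proof: an $\ep$-net $\{x_1^*,\dots,x_N^*\}$ in $S_{E^*}$ from Lemma 2.6 of \cite{MS86}, the norm $|x|=\max_i x_i^*(x)$, and the coordinate map into $\ell_\infty^N$. You merely fill in the details the paper leaves implicit (the symmetry of the net needed for $\max_i x_i^*(x)$ to be a norm, and the $(1-\ep)$ lower bound), which is a useful but not substantively different elaboration.
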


If $X$ is a Banach space and $E\subset X$ is a subspace, then the projection constant of $E$ in $X$ is defined as
$$\lambda(E,X)=\inf\big\{||P||\;:\;P:X\rightarrow E\;,\;\restr{P}{E}=Id_E\big\}.$$

\begin{lemma}[Vitali Milman-unpublished]\label{lemmaJohnson}
Let $X$ be a Banach space. For every $\ep\in(0,1)$ and a subspace $E\subset X$  of dimension $dim(E)=n$, there is another subspace $G_E\subset X$ containing $E$ such that $dim(G_E)\le\big( 1+\frac{2}{\ep} \big)^n$ and
$$\lambda(E,X)\le \frac{2}{1-\ep}\lambda(E,G_E).$$
\begin{proof}(communicated to us by Bill Johnson)
We follow the trace duality arguments set up  in  \cite{JKM79}.
Pick $\ep\in(0,1)$. For a given Banach space $Y$ with a finite dimensional subspace $E\subset Y$ we define a pair of norms on the space of all linear operators $\mathcal{L}(E)$
$$||T||_Y=\inf\big\{ ||\widetilde{T}||\;:\; \widetilde{T}\in\mathcal{L}(Y,E)\;,\;\restr{\widetilde{T}}{E}=T \big\},$$
$$||T||_{\Lambda Y}=||i_ET||_{\Lambda},$$
where $i_E:E\rightarrow Y$ is the inclusion map and $||\cdot||_{\Lambda}$ refers to the nuclear norm in $\mathcal{L}(E,Y)$, that is,
$$||T||_{\Lambda Y}=\inf\bigg\{ \sum\limits_{i=1}^n||x_i^*||\cdot||y_i||\;:\;n\in\N,T=\sum\limits_{i=1}^nx_i^*\otimes y_i, x_i^*\in E^*,y_i\in Y \bigg\} .$$
We know from \cite{JKM79} pg. 377 that both norms are in trace duality. More precisely, we have a dual pairing $\langle\mathcal{L}(E), \mathcal{L}(E)\rangle$ 
given by $\langle T,S \rangle=tr(ST)$ for every $T,S\in\mathcal{L}(E)$, such that
$$(\mathcal{L}(E),||\cdot||_Y)^*=(\mathcal{L}(E),||\cdot||_{\Lambda Y}).$$
Thanks to this interpretation, we can compute
$$\begin{aligned}\lambda(E,Y)&=||Id_E||_Y=\sup\limits_{||T||_{\Lambda Y}=1}tr(TId_E)=\sup\limits_{T\in\mathcal{L}(E)}tr\bigg(\frac{T}{||T||_{\Lambda Y}}\bigg)\\&=\sup\limits_{T\in\mathcal{L}(E)}\frac{1}{\bigg|\bigg| \frac{T}{tr(T)} \bigg|\bigg|_{\Lambda Y}}=\sup\limits_{tr(T)=1}\frac{1}{||T||_{\Lambda Y}}=\frac{1}{\inf\limits_{tr(T)=1}||T||_{\Lambda Y}}.\end{aligned}$$

Returning to the situation of our theorem, $E$ is now a subspace of $X$. Let us first take $\mu\in(0,1/2)$. Now we take, for $\delta=\frac{(1-\ep)(1-\mu)-(1-\ep)1/2}{\lambda(E,X)}>0$, a trace one operator   $S\in\mathcal{L}(E)$ such that
$$\inf\limits_{tr(T)=1}||T||_{\Lambda X}\ge ||S||_{\Lambda X}-\delta.$$
We also take the norm $|\cdot|$ given by Lemma \ref{MilSecht} so that $(E,|\cdot|)$ is isometrically a subspace of $\ell_\infty^{\varphi(n)}$ where $\varphi(n)=\Big[\big( 1+\frac{2}{\ep} \big)^n\Big]$ and, denoting $|\cdot|_{\Lambda Y}$ the nuclear norm taking $(E,|\cdot|)$ as the domain of the operators instead of $(E,||\cdot||)$, we have for every superspace $Y\supset E$ that
$$ ||S||_{\Lambda Y}\le|S|_{\Lambda Y}\le \frac{||S||_{\Lambda Y}}{1-\ep}. $$
It is well-known (Proposition 47.6 in \cite{Tre06}) that $i_ES$ admits an extension $\widetilde{S}:\ell_\infty^{\varphi(n)}\rightarrow X$ almost preserving the nuclear norm, that is $|S|_{\Lambda X}\ge(1-\mu)||\widetilde{S}||_\Lambda$. By Proposition 8.7 from \cite{Tom89} we know that there exist $x_1,\dots,x_{\varphi(n)}\in X$ such that $\widetilde{S}=\sum\limits_{i=1}^{\varphi(n)}e_i^*\otimes x_i$ and
$$||\widetilde{S}||_{\Lambda}=\sum\limits_{i=1}^{\varphi(n)}||x_i||,$$
where $e_i^*\in\big(\ell_\infty^{\varphi(n)}\big)^*$ are the coordinate functionals.
Just considering $G_E=[x_i]_{i=1}^{\varphi(n)}$, we can see that
$$\begin{aligned}||S||_{\Lambda X}&\ge (1-\ep)|S|_{\Lambda X}\ge(1-\ep)(1-\mu)||\widetilde{S}||_{\Lambda}\\&\ge(1-\ep)(1-\mu)|S|_{\Lambda G_E}\ge||S||_{\Lambda G_E}(1-\ep)(1-\mu).\end{aligned}$$
Finally, taking into acount that $\lambda(E,G_E)\le\lambda(E,X)$,   we finish the proof because
$$\begin{aligned}\lambda(E,X)&\le\frac{1}{||S||_{\Lambda X}-\delta}\le \frac{1}{(1-\ep)(1-\mu)||S||_{\Lambda G_E}-\delta}
\\&\le\frac{1}{(1-\ep)(1-\mu)\inf\limits_{tr(T)=1}||T||_{\Lambda G_E}-\delta}\\&=\lambda(E,G_E)\frac{1}{(1-\ep)(1-\mu)-\delta\lambda(E,G_E)}\le\frac{2}{1-\ep}\lambda(E,G_E).\end{aligned}$$
\end{proof}
\end{lemma}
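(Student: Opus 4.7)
The plan is to leverage trace duality for projection constants, combined with the Milman--Schechtman embedding Lemma \ref{MilSecht}, which puts $E$ (up to a factor $1-\ep$) isometrically inside $\ell_\infty^N$ with $N=[(1+2/\ep)^n]$, and then to exploit the injectivity of $\ell_\infty^N$ to pull the relevant witness operators back into a subspace of $X$ of dimension at most $N$.

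First I would set up, for any superspace $Y\supset E$, the pair of norms on $\mathcal{L}(E)$
\[
\|T\|_Y=\inf\{\|\widetilde T\|:\widetilde T\in\mathcal{L}(Y,E),\ \widetilde T|_E=T\},\qquad \|T\|_{\Lambda Y}=\|i_E T\|_{\mathrm{nuclear}},
\]
where $i_E\colon E\hookrightarrow Y$. These are in trace duality (the classical fact from \cite{JKM79}), so via $\langle T,S\rangle=\mathrm{tr}(ST)$ the identity $Id_E$ is the norming functional and $\lambda(E,Y)=\|Id_E\|_Y=\bigl(\inf_{\mathrm{tr}(T)=1}\|T\|_{\Lambda Y}\bigr)^{-1}$. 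Thus the lemma reduces to showing that the infimum taken over $Y=X$ is not much smaller than the one taken over some $G_E\subset X$ of dimension $\le N$.

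Next, pick $\mu>0$ small and a near-optimal trace-one $S\in\mathcal{L}(E)$ with $\|S\|_{\Lambda X}\le\inf_{\mathrm{tr}(T)=1}\|T\|_{\Lambda X}+\delta$; apply Lemma \ref{MilSecht} to obtain the norm $|\cdot|$ making $(E,|\cdot|)\hookrightarrow\ell_\infty^N$ isometrically, so the associated nuclear norm $|\cdot|_{\Lambda X}$ satisfies $\|S\|_{\Lambda X}\ge(1-\ep)|S|_{\Lambda X}$. The key step is to use the injectivity of $\ell_\infty^N$: since $i_E S\in\mathcal{L}(E,X)$ and $(E,|\cdot|)$ sits isometrically in $\ell_\infty^N$, there is an extension $\widetilde S\colon\ell_\infty^N\to X$ whose nuclear norm is within factor $(1-\mu)$ of $|S|_{\Lambda X}$ (this is the standard ``almost isometric'' extension property for nuclear operators out of $\ell_\infty^N$, as in \cite{Tre06}). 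Writing $\widetilde S=\sum_{i=1}^N e_i^*\otimes x_i$ with $\|\widetilde S\|_\Lambda=\sum\|x_i\|$ (a standard representation for finite rank operators whose domain is $\ell_\infty^N$), I set $G_E:=\mathrm{span}\{x_1,\dots,x_N\}\subseteq X$, so $\dim G_E\le N\le(1+2/\ep)^n$ and $E\subseteq G_E$ (since $i_E S$ extends $i_E$ composed with $S$, and $\mathrm{tr}(S)=1$ forces $E\subseteq G_E$ after a small adjustment).

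Finally I would chain the inequalities: $\|S\|_{\Lambda X}\ge(1-\ep)(1-\mu)\|\widetilde S\|_\Lambda\ge(1-\ep)(1-\mu)\|S\|_{\Lambda G_E}\ge(1-\ep)(1-\mu)\inf_{\mathrm{tr}(T)=1}\|T\|_{\Lambda G_E}$, so by the trace-duality formula
\[
\lambda(E,X)\le\frac{1}{(1-\ep)(1-\mu)\inf_{\mathrm{tr}(T)=1}\|T\|_{\Lambda G_E}-\delta}=\frac{\lambda(E,G_E)}{(1-\ep)(1-\mu)-\delta\,\lambda(E,G_E)};
\]
choosing $\mu$ small and $\delta$ small enough (depending on $\lambda(E,X)$, which is finite) makes the denominator at least $(1-\ep)/2$, yielding the claimed bound. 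The main obstacle I expect is the bookkeeping around the extension $\widetilde S$: verifying that one genuinely gets an $\ell_\infty^N$-extension of $i_E S$ with nuclear norm arbitrarily close to $|S|_{\Lambda X}$ requires invoking the injectivity/metric extension property of $\ell_\infty^N$ carefully, and ensuring the containment $E\subseteq G_E$ holds (rather than just a small perturbation of $E$). All other steps are standard manipulations once the trace-duality framework is in place.
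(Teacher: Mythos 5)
Your proposal is correct and follows essentially the same route as the paper's proof: the trace-duality formula $\lambda(E,Y)=\big(\inf_{tr(T)=1}\|T\|_{\Lambda Y}\big)^{-1}$, the Milman--Schechtman embedding of Lemma \ref{MilSecht}, the almost-nuclear-norm-preserving extension of $i_ES$ through $\ell_\infty^{N}$, the representation $\widetilde S=\sum e_i^*\otimes x_i$ defining $G_E$, and the same choice of $\delta$ (small in terms of $\lambda(E,X)$) in the final chain of inequalities. The one point you flag as delicate, namely securing the genuine containment $E\subseteq G_E$ rather than only a perturbation, is handled no more explicitly in the paper, which likewise simply sets $G_E=[x_i]_{i=1}^{\varphi(n)}$.
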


\begin{definition}
Given a separable Banach space $X$, we will say that $\beta=(e_n)\subset X$ is a fundamental sequence if $[e_n]=X$ and we will denote $E_n^{\beta}=[e_i]_{i=1}^n$.
\end{definition}

\begin{definition}
Let $X$ be a separable Banach space, $\beta=(e_n)$ a fundamental sequence and $K\subset X$ a bounded subset. We will define the following concepts:
\begin{itemize}
\item The sequence of inner radii $(r_n^\beta)$ given by
$$r_n^\beta=\sup\big\{r\ge0\;:\;B_{E_n^\beta}(x,r)\subset K\cap E_n^{\beta}\;,\;x\in X\big\}\;\;\;\;\forall n\in\N.$$
\item The sequence of heights $(h_n^\beta)$ given by
$$h_n^\beta=\sup\big\{d(x,E_n^\beta)\;:\;x\in K\big\}\;\;\;\;\forall n\in\N.$$
\end{itemize}
\end{definition}

\begin{definition}\label{smallcompact}
We say that a bounded subset $K$ of a separable Banach space $X$ is small if there exist an $\ep\in(0,1)$, a fundamental sequence $\beta=(e_n)$ in $X$ and a strictly increasing $\sigma:\N\rightarrow \N$ such that
$$0<\frac{h_{\sigma(n)}^\beta}{r_{\sigma(n)}^\beta}\le\frac{1}{2\sigma(n)^2\Big(\big(1+\frac{2}{\ep}\big)^{\sigma(n)}+2\Big)}\;\;\;\;\forall n\in\N.$$
\end{definition}

Note that such sets are necessarily compact and generate $X$.\\

If $\beta=(e_n)$ is a monotone Schauder basis and $X_i=\langle e_i\rangle$, then we know from Theorem \ref{theoFDDcompact} and Remark \ref{quotient} that for every sequence $(r_n)\subset\R^+$ such that
$$\frac{r_n}{r_{n-1}}\le q_n=\frac{1}{n2^{n+1}},$$
the compact $K=\co\left(\bigcup\limits_{k\in\N}r_kB_{X_k}\right)$ is a GCCR. In this case it is easily seen that there is a $C>0$ independent of $n\in\N$ and $X$ with
$$\frac{h_{n}^\beta}{r_{n}^\beta}\le C\frac{q_n}{n}=\frac{C}{n^22^{n+1}},$$
where the right hand side of the inequality is a very similar sequence to the one given in the definition of smallness.

More generally, it is easy to check using Remark \ref{quotient} that the following result holds.

\begin{prop}
If a separable Banach space  $X$ has an FDD then $X$ admits a small GCCR.
\end{prop}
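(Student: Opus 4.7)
The plan is to manufacture the small GCCR directly from the construction of Theorem~\ref{theoFDDcompact}. Since the conclusion is invariant under equivalent renormings (up to a bounded distortion that can be absorbed into a faster decay of the $r_n$), I first renorm so that the given FDD $(X_n)$ is monotone; let $(P_n)$ denote its natural projections and $d_k=\dim X_k$. I form a fundamental sequence $\beta=(e_n)$ by concatenating algebraic bases of $X_1,X_2,\dots$, so that with $\sigma(n)=\sum_{k=1}^n d_k$ one has $E_{\sigma(n)}^\beta=X_1\oplus\cdots\oplus X_n$, and $\sigma$ is strictly increasing. I fix $\ep=1/2$, so that $(1+2/\ep)^{\sigma(n)}=5^{\sigma(n)}$.

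I would then choose $(r_n)\subset\R^+$ decreasing fast enough so that
\[
\frac{r_n}{r_{n-1}}\le q_n=\frac{1}{n\,2^{n+1}}
\qquad\text{and}\qquad
\frac{r_{n+1}}{r_n}\le\frac{1}{4n\,\sigma(n)^2\bigl(5^{\sigma(n)}+2\bigr)},
\]
which is trivially achievable by induction on $n$. The first estimate, together with Theorem~\ref{theoFDDcompact} and Remark~\ref{quotient}, ensures that $K=\overline{\co}\bigl(\bigcup_n r_n B_{X_n}\bigr)$ is a compact Lipschitz retract of $X$; being convex and containing every $r_n B_{X_n}$, $K$ is a GCCR, and it remains to check smallness along $\sigma$.

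The two required estimates for smallness are elementary. For the height, $(I-P_n)$ annihilates $r_k B_{X_k}$ when $k\le n$ and sends $r_k B_{X_k}$ into the ball of radius $r_k\le r_{n+1}$ when $k>n$; passing to closed convex hulls gives $\|(I-P_n)(x)\|\le r_{n+1}$ for every $x\in K$, so $h_{\sigma(n)}^\beta\le r_{n+1}$. For the inner radius, if $x\in E_{\sigma(n)}^\beta$ satisfies $\|x\|\le r_n/(2n)$, monotonicity yields $\|x_k\|\le 2\|x\|\le r_n/n\le r_k/n$ for every $k\le n$, whence $\sum_{k=1}^n \|x_k\|/r_k\le 1$ and the standard convex-combination formula places $x$ in $\co\bigl(\bigcup_{k\le n} r_k B_{X_k}\bigr)\subset K$; hence $r_{\sigma(n)}^\beta\ge r_n/(2n)$. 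Combining, $h_{\sigma(n)}^\beta/r_{\sigma(n)}^\beta\le 2n\,r_{n+1}/r_n$, which by the second bound on $(r_n)$ is at most $\bigl(2\sigma(n)^2(5^{\sigma(n)}+2)\bigr)^{-1}$, exactly as Definition~\ref{smallcompact} requires with $\ep=1/2$.

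I do not foresee a serious obstacle: the difficult work is already encapsulated in Theorem~\ref{theoFDDcompact}, and the verification of smallness is pure arithmetic with rapidly decaying $(r_n)$. The one piece of bookkeeping to keep straight is that the smallness condition is indexed by a single integer while the FDD is indexed by blocks of variable dimension, resolved cleanly by only testing smallness along the subsequence $\sigma(n)$ that matches block boundaries. The renorming step introduces an equivalence constant $C$ that simultaneously dilates heights and inner radii, hence distorts the ratio by at most $C^2$, which is absorbed by making the second inequality on $r_{n+1}/r_n$ correspondingly stricter.
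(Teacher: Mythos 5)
Your proposal is correct and follows the same route the paper intends: take the compact $K=\overline{\co}\bigl(\bigcup_n r_nB_{X_n}\bigr)$ from Theorem~\ref{theoFDDcompact} with $(r_n)$ decaying fast enough, and verify smallness by bounding the heights by $r_{n+1}$ and the inner radii from below by $r_n/(2n)$. The paper only sketches this verification (and only for the basis case), so your explicit handling of variable block dimensions via $\sigma(n)=\sum_{k\le n}\dim X_k$ and of the renorming constant is a welcome completion of the same argument.
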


We now pass to the promised opposite implication. Note that the convexity assumption on the generating compact $K$ is not needed.

\begin{theorem}\label{theosmall}
Let $X$ be a separable Banach space. If there exists a Lipschitz retraction from $X$ onto a small compact subset, then $X$ has the $\pi$-property.
\begin{proof}
Assume that there is a Lipschitz retraction from $X$ onto a small compact $K$. Take $\ep\in(0,1)$, $\beta=(e_n)$ a fundamental sequence of $X$ and $\sigma:\N\rightarrow\N$ strictly increasing for which the inequality of Definition \ref{smallcompact} holds true, and let $\varphi(n)=\Big[\big(1+\frac{2}{\ep}\big)^{\sigma(n)}\Big]$, $E_n=E_{\sigma(n)}^\beta$, $h_n=h_{\sigma(n)}^\beta$ and $r_n=r_{\sigma(n)}^\beta$. Lemma \ref{lemmaJohnson} guarantees that for every $n\in\N$ there is a finite dimensional subspace $G_n\subset X$ of dimension $dim(G_n)=\varphi(n)$ such that for every projection $P: G_n\rightarrow E_n$, the inequality $||P||\ge (1-\ep)\frac{\lambda(E_n,X)}{2}$ holds. Assume that $R:X\rightarrow K$ is the Lipschitz retraction, then taking $C_n:K\rightarrow E_n$ a nearest point map (it may not be unique), we define $\widetilde{R}_n=\restr{(C_n\circ R)}{G_{n}}:G_{n}\rightarrow E_n$ for every $n\in\N$. Now,
$$||\widetilde{R}_n(x)-\widetilde{R}_n(y)||\le ||R||\bigg( ||x-y||+\frac{2h_{n}}{||R||} \bigg)\;\;\;\;\forall x,y\in G_{n},$$
so by the Proposition of \cite{B99}, for every $\tau>0$, there is a Lipschitz mapping 
$$R_{n,\tau}:G_{n}\rightarrow E_n$$
such that
$$||R_{n,\tau}||_{Lip}\le||R||\bigg( 1+\frac{\varphi(n)h_{n}}{||R||\tau} \bigg),$$
$$||R_{n,\tau}(x)-\widetilde{R}_n(x)||\le||R||\bigg( \tau+\frac{2h_{n}}{||R||} \bigg)\;\;\;\;\forall x\in G_{n}.$$

For the rest of the argument we fix  $x_n\in K_n:=K\cap E_n$ such that $B_{E_n}(x_n,r_n)\subset K_n$ (it exists by the definition of the inner radius and the compactness).
Now we choose $\tau_n=\frac{\varphi(n)h_{n}}{||R||}$, and define $R_n:G_n\rightarrow E_n$ by $R_n(x)=R_{n,\tau_n}(x+x_n)-x_n$. 
If $x+x_n\in K_n$ then $\widetilde{R}_n(x+x_n)=x+x_n$ acts as an identity.
Hence we have that for every $x\in K_n+\{-x_n\}$
$$||R_n(x)-x||=||R_{n,\tau_n}(x+x_n)-\widetilde{R}_n(x+x_n)||\le h_{n}(\varphi(n)+2)=:\rho_n.$$
Now, let $(a_i,a_i^*)_{i=1}^{\varphi(n)}$ be a normalized linear basis for $G_n$ with projections $(S_i)_{i=1}^{\varphi(n)}$ such that $(a_i,a_i^*)_{i=1}^{\sigma(n)}$ is an Auerbach basis for $E_n$. Then, 
$$B_n=r_n\co\big(\{\pm a_i\;,\;i=1,\dots,\sigma(n)\}\big)\subset K_n+\{-x_n\}.$$
Fix a sequence $(\delta_k)$ of positive numbers converging to zero. Now we define for every $k\in\N$ the compact
$$B_{n,k}=B_n+\delta_k\sum\limits_{i=\sigma(n)+1}^{\varphi(n)}[-a_i,a_i]\subset G_n.$$
The geometrical shape of this set can be described as a $r_n$-multiple of the unit ball of $\ell_1^{\sigma(n)}$, a sort of a base of a hypercylinder, located in $E_n$ times a hypercube of side length $2\delta_k$ sticking into the remaining dimensions of $G_n$.
Letting $\delta_k$ go to zero of course means that this set gets squashed down to its base in $E_n$.
For any $i\in\{1,\dots,\sigma(n)\}$ we denote $(B_{n,k})_i=(Id-a_i^*\otimes a_i)(B_{n,k})$. This set is just a one-codimensional section (or a projection of rank $\varphi(n)-1$) of $B_{n,k}$ which reduces the base by one coordinate.

In order to recover the shape $B_{n,k}$ from its section $(B_{n,k})_i$ we pass from any point  $x^i\in (B_{n,k})_i$ to boundary point of $B_{n,k}$ which got projected onto it. The newly acquired coordinate vector will then be denoted by $x_i(x^i)$ and 
given by the formula

$$x_i(x^i)=\bigg( r_n-\sum\limits_{\substack{j=1\\j\neq i}}^{\sigma(n)}a_j^*(x^i) \bigg)a_i,\;\; x^i\in (B_{n,k})_i.$$

For convenience in our computations, we also introduce the quantity
$$z_i(x^i)=R_n(x^i+x_i(x^i))-R_n(x^i-x_i(x^i))-2x_i(x^i).$$

As $S_{\sigma(n)}$ is a linear projection, we know that
$-S_{\sigma(n)}\big(x^i+x_i(x^i)\big)+S_{\sigma(n)}\big(x^i-x_i(x^i)\big)=-2x_i(x^i)$. Using this and a triangle inequality with four terms 
we have that
$$\begin{aligned}||z_i(x^i)||\le&\big|\big|R_n\big(x^i+x_i(x^i)\big)-R_n\big(S_{\sigma(n)}\big(x^i+x_i(x^i)\big)\big)\big|\big|\\&+\big|\big|R_n\big(S_{\sigma(n)}\big(x^i+x_i(x^i)\big)\big)-S_{\sigma(n)}\big(x^i+x_i(x^i)\big)\big|\big|\\&+\big|\big|R_n\big(S_{\sigma(n)}\big(x^i-x_i(x^i)\big)\big)-R_n\big(x^i-x_i(x^i)\big)\big|\big|\\&+\big|\big|S_{\sigma(n)}\big(x^i-x_i(x^i)\big)-R_n\big(S_{\sigma(n)}\big(x^i-x_i(x^i)\big)\big)\big|\big|\\\le&2||R_n||\varphi(n)\delta_k+2\rho_n.\end{aligned}$$
For a set $J\subset \{1,\dots,\varphi(n)\}$ with $\# J=m\ge1$, we define the measure in $[a_i]_{i\in J}$ as 
$$\lambda^m_J(A)=\lambda_m\bigg(\prod\limits_{i\in J}\Big( a_i^*\big(A\big)\Big)\bigg)\;\;\;\;\forall A\in \mathcal{M}^m_J, $$
where $\lambda_m$ is the Lebesgue measure in $\R^m$ and $\mathcal{M}^m_J=\bigg\{A\subset [a_i]_{i\in J}\;:\;\prod\limits_{i\in J}\Big( a_i^*\big(A\big)\Big) \text{ is Lebesgue measurable subset of }\R^m\bigg\}$. If $J=\{1,\dots,\varphi(n)\}$ we denote $\lambda^{\varphi(n)}=\lambda^{\varphi(n)}_J$ and for every $i\in\{1,\dots,\varphi(n)\}$, if $J=\{1,\dots,\varphi(n)\}\setminus\{i\}$, we denote $\lambda^{\varphi(n)-1}_i=\lambda^{\varphi(n)-1}_{J}$.
Then, we are ready to define  the linear operators $P_{n,k}:G_n\rightarrow E_n$, for every $k\in\N$, as
$$P_{n,k}(v)=\frac{1}{\lambda^{\varphi(n)}(B_{n,k})}\int_{B_{n,k}}dR_n(x)[v]d\lambda^{\varphi(n)}(x).$$

In \cite{Bra+14} pg. 47
the volumes of $\ell_p^n$ balls $B_p^n$ have been computed as $|B_p^n|=\frac{2^n\Gamma(\frac1p+1)^n}{\Gamma(\frac np+1)}$. Using this result for $p=1$ (for the base part of our set $B_{n,k}$) and the standard
properties of Lebesgue measure we obtain the following values for our sets for arbitrary $i\in\{1,\dots,\sigma(n)\}$
$$\lambda^{\varphi(n)-1}_i\big((B_{n,k})_i\big)=\frac{2^{\varphi(n)-1}r_n^{\sigma(n)-1}\delta_k^{\varphi(n)-\sigma(n)}}{(\sigma(n)-1)!},$$
$$\lambda^{\varphi(n)}\big(B_{n,k}\big)=\frac{2^{\varphi(n)}r_n^{\sigma(n)}\delta_k^{\varphi(n)-\sigma(n)}}{\sigma(n)!},$$
so the quotient is
$$\frac{\lambda^{\varphi(n)-1}_i((B_{n,k})_i)}{\lambda^{\varphi(n)}(B_{n,k})}=\frac{\sigma(n)}{2r_n}.$$
Note that the expression
$$R_n(x^i+x_i(x^i))-R_n(x^i-x_i(x^i))=z_i(x^i)+2x_i(x^i)$$
represents the difference of the values of the operator $R_n$ between the endpoints of a segment cutting through $B_{n,k}$, which passes through the point $x^i$ with direction $a_i$.
As $B_{n,k}=\{u+w\in G_n\;:\;u\in(B_{n,k})_i\;,\;w\in[-x_i(x^i),x_i(x^i)]\}$, thanks to Fubini's Theorem and the Fundamental Theorem of Calculus applied to the $i$-th coordinate, we can compute for each $i\in\{1,\dots,\sigma(n)\}$
$$\begin{aligned}\big|\big|a_i-P_{n,k}(a_i)\big|\big| 
&=\bigg|\bigg|a_i- \frac{1}{\lambda^{\varphi(n)}(B_{n,k})}\int_{B_{n,k}}dR_n(x)[v]d\lambda^{\varphi(n)}(x)\bigg|\bigg|\\ 
&=\bigg|\bigg|a_i- \frac{1}{\lambda^{\varphi(n)}(B_{n,k})}\int_{(B_{n,k})_i}z_i(x^i)+2x_i(x^i)d\lambda^{\varphi(n)-1}_i(x^i) \bigg|\bigg|\\&=\bigg|\bigg|\frac{1}{\lambda^{\varphi(n)}(B_{n,k})} \int_{(B_{n,k})_i}z_i(x^i)d\lambda^{\varphi(n)-1}_i(x^i) \bigg|\bigg|\\
&\le\frac{\lambda^{\varphi(n)-1}_i((B_{n,k})_i)}{\lambda^{\varphi(n)}(B_{n,k})}\big(2||R_n||\varphi(n)\delta_k+2\rho_n\big)\\
&=\frac{\sigma(n)||R_n||\varphi(n)\delta_k+\sigma(n)\rho_n}{r_n}.
\end{aligned}$$
We may assume that $P_{n,k}$ pointwise converge in $k$ and define $P_n(x)=\lim\limits_{k\to\infty}P_{n,k}(x)$ for every $x\in G_n$, which is a linear operator from $G_n$ to $E_n$ satisfying that
$$||P_n(a_i)-a_i||\le\frac{\sigma(n)h_{n}(\varphi(n)+2)}{r_n}\;\;\;\;\forall i\in\{1,\dots,\sigma(n)\}.$$
Using the fact that $K$ is small and $||a_i^*||=1$, $i\in\{1,\dots,\sigma(n)\}$, we obtain that for every $x\in E_n$
$$||P_n(x)-x||=\bigg|\bigg| \sum\limits_{i=1}^{\sigma(n)}a_i^*(x)(P_n(a_i)-a_i) \bigg|\bigg|\le\frac{h_{n}{\sigma(n)}^2(\varphi(n)+2)}{r_n}||x||\le\frac{1}{2}||x||.$$
Finally, we construct the projection $\widetilde{P}_n=\Big(\restr{P_n}{E_n}\Big)^{-1}\circ P_n:G_n\rightarrow E_n$ of norm
$$||\widetilde{P}_n||\le\Big|\Big| \Big(\restr{P_n}{E_n}\Big)^{-1} \Big|\Big|\cdot||P_n||\le2\cdot 2||R|| =4||R||\;\;\;\;\;\forall n\in\N.$$
This implies that $X$ has the $\pi$-property since
$$\lambda(E_n,X)\le\frac{2}{1-\ep}\lambda(E_n,G_n)\le\frac{2}{1-\ep}||\widetilde{P}_n||\le\frac{8||R||}{1-\ep}\;\;\;\;\forall n\in\N.$$
\end{proof}
\end{theorem}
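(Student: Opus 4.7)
The plan is to witness the $\pi$-property of $X$ by producing uniformly bounded finite rank projections onto the subspaces $E_n := E_{\sigma(n)}^\beta$ (whose union is dense, since $\beta$ is fundamental). Since extracting projections directly from the nonlinear retraction $R$ seems hopeless, I will instead bound the projection constants $\lambda(E_n,X)$ uniformly in $n$. Lemma \ref{lemmaJohnson} reduces this to bounding $\lambda(E_n,G_n)$ for a superspace $G_n \supset E_n$ of dimension at most $\varphi(n) := [(1+2/\varepsilon)^{\sigma(n)}]$; this is exactly where the strange-looking factor in Definition \ref{smallcompact} comes from.

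Fix $n$, write $h_n$, $r_n$ for the height and inner radius on $E_n$, and choose $x_n \in E_n$ with $B_{E_n}(x_n,r_n) \subset K$. First I compose $R$ with a nearest point map $C_n \colon K \to E_n$ (well-defined by compactness) to obtain $\widetilde{R}_n := C_n \circ R|_{G_n} \colon G_n \to E_n$, which is coarse Lipschitz with additive error at most $2h_n$. I then convert this to a genuine Lipschitz map $R_{n,\tau}$ using the Bourgain--Begun procedure \cite{B99}; balancing the two resulting error terms prescribes $\tau_n := \varphi(n)h_n/\|R\|$. After recentering at $x_n$, the resulting Lipschitz map $R_n \colon G_n \to E_n$ satisfies $\|R_n(x)-x\| \leq \rho_n := h_n(\varphi(n)+2)$ whenever $x$ lies in the translate of $K \cap E_n$ at the origin, and in particular on an $\ell_1^{\sigma(n)}$-type ball of radius $r_n$ in $E_n$.

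Next I extract a linear operator $G_n \to E_n$ by differentiating and averaging $R_n$ over a cleverly shaped region $B_{n,k}$: an $r_n$-multiple of the unit ball of $\ell_1^{\sigma(n)}$ (lying in $E_n$) times a $\delta_k$-hypercube in the complementary directions of $G_n$, with $\delta_k \to 0$. By Rademacher's theorem $R_n$ is a.e. differentiable, so $P_{n,k}(v) := \lambda^{\varphi(n)}(B_{n,k})^{-1}\int_{B_{n,k}} dR_n(x)[v]\, d\lambda^{\varphi(n)}(x)$ makes sense and is linear in $v$. Applying Fubini along an Auerbach basis direction $a_i$ (with $i \leq \sigma(n)$) and the one-dimensional fundamental theorem of calculus, the integral of $dR_n[a_i]$ across each segment becomes a difference of values of $R_n$ at two boundary points; splitting via the linear projection $S_{\sigma(n)}$ onto $E_n$ and a four-term triangle inequality, these differences are within $2\|R_n\|\varphi(n)\delta_k + 2\rho_n$ of the identity contribution. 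Letting $\delta_k \to 0$ along a pointwise convergent subsequence produces a linear $P_n \colon G_n \to E_n$ with $\|P_n\| \leq 2\|R\|$ and $\|P_n(a_i)-a_i\| \lesssim \sigma(n)\rho_n/r_n$.

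Summing over the $\sigma(n)$ Auerbach directions and invoking the smallness estimate $h_n/r_n \leq (2\sigma(n)^2(\varphi(n)+2))^{-1}$ yields $\|P_n|_{E_n} - \operatorname{Id}_{E_n}\| \leq 1/2$, so $P_n|_{E_n}$ is invertible on $E_n$ with inverse of norm at most $2$. Setting $\widetilde{P}_n := (P_n|_{E_n})^{-1} \circ P_n$ gives a genuine projection $G_n \to E_n$ of norm at most $4\|R\|$, and Lemma \ref{lemmaJohnson} upgrades this to $\lambda(E_n,X) \leq 8\|R\|/(1-\varepsilon)$ uniformly in $n$, yielding the $\pi$-property. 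The delicate step will be the averaging calculation: because the Bourgain--Begun Lipschitzization introduces a factor $\varphi(n)$ growing exponentially in $\sigma(n)$, the error controlling $\|P_n - \operatorname{Id}\|$ on $E_n$ carries a prefactor of order $\sigma(n)^2\varphi(n)$, and it is precisely this prefactor that the definition of smallness is engineered to kill.
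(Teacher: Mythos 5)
Your proposal follows the paper's own proof essentially step for step: the reduction of $\lambda(E_n,X)$ to $\lambda(E_n,G_n)$ via Milman's lemma, the nearest-point composition and Bourgain--Begun Lipschitzization with the same choice $\tau_n=\varphi(n)h_n/\|R\|$, the derivative averaging over the $\ell_1$-ball-times-hypercube region with Fubini and the fundamental theorem of calculus, the perturbation/inversion of $P_n|_{E_n}$, and the final uniform bound $8\|R\|/(1-\varepsilon)$. The argument is correct and is the same as the one in the paper.
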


Recall that thanks to Theorem 1.3 in \cite{Joh2}, dual Banach spaces have an FDD if and only if they enjoy the $\pi$-property. Hence we have the next characterization.
\begin{corollary}
Let $X$ be a separable dual  Banach space. Then $X$ has an FDD if and only if $X$ admits a small GCCR, if and only if $X$ admits a small subset which is a Lipschitz retract of $X$.
\end{corollary}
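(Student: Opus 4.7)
The plan is to establish the corollary as a triangle of implications, chaining together the ingredients that have been (or are, by hypothesis) already in place.

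First, the implication that $X$ having an FDD implies the existence of a small GCCR is precisely the Proposition stated just before Theorem \ref{theosmall}. So I would simply invoke this result (whose justification came from Theorem \ref{theoFDDcompact} together with the quantitative estimate in Remark \ref{quotient}, which shows that the generating convex compact one builds inside an FDD automatically has inner radii and heights satisfying the smallness inequality of Definition \ref{smallcompact}).

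Next, the implication from admitting a small GCCR to admitting a small Lipschitz retract is immediate by definition: a GCCR is a Lipschitz retract, so nothing has to be proved.

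For the remaining direction, suppose $X$ admits a small subset $K$ which is a Lipschitz retract of $X$. By Theorem \ref{theosmall}, $X$ has the $\pi$-property. Now I use the hypothesis that $X$ is a separable dual Banach space together with Theorem 1.3 in \cite{Joh2}, which asserts that for separable dual Banach spaces the $\pi$-property is equivalent to the existence of an FDD. This closes the loop back to (1). The only delicate point is making sure the hypotheses of each cited result are met, in particular that separability is preserved throughout and that Johnson's theorem is applicable precisely in the dual setting (without the duality assumption, the argument would only yield the $\pi$-property and not the FDD, which is exactly why the corollary is stated for separable duals).

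Since every step is a direct citation of an already established theorem, there is no genuine obstacle to overcome here; the corollary is essentially a bookkeeping statement collecting Theorem \ref{theoFDDcompact}, the Proposition preceding Theorem \ref{theosmall}, Theorem \ref{theosmall}, and \cite[Thm.~1.3]{Joh2}. The real content of the result lies in the main theorem (Theorem \ref{theosmall}), whose smallness-based differentiation argument produces the $\pi$-property, combined with the nontrivial fact that $\pi$-property and FDD coincide in the dual setting.
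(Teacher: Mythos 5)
Your proposal is correct and matches the paper's intended argument exactly: the paper derives the corollary by chaining the Proposition (FDD implies small GCCR), the trivial observation that a small GCCR is a small Lipschitz retract, Theorem \ref{theosmall} (small Lipschitz retract implies the $\pi$-property), and Theorem 1.3 of \cite{Joh2} to convert the $\pi$-property back into an FDD in the separable dual setting. Nothing to add.
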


In Section \ref{secretr} we have found a sequence $(q_n)\subset \R^+$ such that for every sequence $r=(r_n)\subset \R^+$ satisfying that $\frac{r_n}{r_{n-1}}\le q_n$ there is a $\lambda$-Lipschitz retraction $R(r):X\rightarrow K(r)$, where $K(r)=\co\Big(\bigcup\limits_{k\in\N}r_kB_{X_k}\Big)$ for some FDD $(X_n)_{n\in\N}$.  Let $r=(r_n)$ be such a sequence and denote for every $k,m\in\N$ the sequence $r^{k,m}=(r_1,\dots,r_m,r_{m+1}/k,\dots,r_n/k,\dots)$. Taking subsequences, we may assume that for every $x\in X$ and every $m\in\N$ there exists $R_m(x)=\lim\limits_{k\to\infty}R(r^{k,m})(x)$ which define retractions onto increasing finite dimensional compacts. This leads to the $\pi$-property for Lipschitz retractions.

\begin{definition}
Let $X$ be a separable Banach space and $\lambda>0$. We say that $X$ has the Lipschitz $\pi_\lambda$-property if there exists an increasing sequence of finite dimensional convex subsets $(C_n)$ of $X$ such that $X=\overline{\bigcup\limits_{n\in\N}span (C_n)}$ and there exists a $\lambda$-Lipschitz retraction $R_n:X\rightarrow C_n$ for every $n \in\N$.
\end{definition}

Analogously to the result of Godefroy and Kalton on the Lipschitz bounded approximation property (Theorem \ref{GODKAL}), we are going to prove that this new property is nothing else but the well-known $\pi$-property. This result is a direct consequence of the next Theorem, which is mainly based on an
ultraproduct technique  similar to the result of Lindenstrauss in \cite{Lin64}, see Corollary 7.3 of \cite{BL2000}.

\begin{theorem}\label{complemented}
Let $X$ be a Banach space, $\lambda_1,\lambda_2>0$ and $Y\subset X$ a subspace $\lambda_1$-complemented in its bidual. If there is a $\lambda_2$-Lipschitz retraction from $X$ onto a convex subset $K$ containing 0 such that $\overline{span}(K)=Y$ then $Y$ is $\lambda_1\lambda_2$-complemented in $X$.
\begin{proof}
Let $R:X\rightarrow K\subset Y$ be such a retraction.  Then for every $n\in\N$ we define the $\lambda_2$-Lipschitz retraction $R_n:X\rightarrow nK$ given by $R_n(x)=nR(x/n)$, for every $x\in X$. 
As in the proof of Theorem 7.2 of \cite{BL2000} we let $\mathcal{U}$ to be a free ultrafilter on $\N$, and we put
$$S(x)=\lim\limits_{\mathcal{U}}R_{n}(x)\;\;\;\;\forall x\in X.$$
It is standard to check that $S: X\to Y^{**}$ is a $\lambda_2$-Lipschitz mapping, which is identity on $Y$.
 Now, if $L:Y^{**}\rightarrow Y$ is a bounded linear projection, we just define $\widetilde{R}:X\rightarrow Y$ by
$$\widetilde{R}(x)=L\circ S(x)\;\;\;\;\forall x\in X.$$
Finally $\widetilde{R}$ is a $\lambda_1\lambda_2$-Lipschitz retraction from $X$ onto $Y$ so by Corollary 7.3 of \cite{BL2000} we are done.
\end{proof}
\end{theorem}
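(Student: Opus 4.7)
The plan is to follow the ultraproduct scheme from the proof of Theorem~7.2 in \cite{BL2000}: first turn the retraction $R$ into a Lipschitz map $S:X\to Y^{**}$ that agrees with the canonical inclusion on $Y$, then compose with a linear projection $L:Y^{**}\to Y$ to obtain a Lipschitz retraction from $X$ onto $Y$, and finally invoke Corollary~7.3 of \cite{BL2000} to pass from a Lipschitz retraction onto the subspace $Y$ to a bounded linear projection of the same norm.

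First I would produce an increasing family of scaled retractions. Because $0\in K$ and $R|_K=\mathrm{Id}_K$, we have $R(0)=0$, and setting $R_n(x)=nR(x/n)$ yields a $\lambda_2$-Lipschitz retraction of $X$ onto $nK\subset Y$ with $R_n(0)=0$. Convexity of $K$ together with $0\in K$ gives $mK\subset nK$ for $m\le n$, so the sets $nK$ form a nested exhaustion of the cone generated by $K$; from $R_n(0)=0$ one also gets the uniform bound $\|R_n(x)\|\le\lambda_2\|x\|$ for every $x\in X$, so $(R_n(x))_n$ lies in a norm-bounded, hence weak$^*$-relatively compact, subset of $Y^{**}$.

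Next, fix a free ultrafilter $\mathcal{U}$ on $\mathbb{N}$ and set $S(x):=\mathrm{w}^{*}\text{-}\lim_{\mathcal{U}}R_n(x)\in Y^{**}$. The pointwise inequality $\|R_n(x)-R_n(x')\|\le\lambda_2\|x-x'\|$ carries over to $S$ by weak$^*$-lower semicontinuity of the norm, so $S$ is $\lambda_2$-Lipschitz. To check that $S$ coincides with the canonical embedding $\iota:Y\hookrightarrow Y^{**}$ on $Y$, note that on $\bigcup_n nK$ this is immediate: every $y\in m_0 K$ satisfies $R_n(y)=y$ for all $n\ge m_0$, giving $S(y)=\iota(y)$ directly. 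Since $\bigcup_n nK$ is dense in $Y$ (for the symmetric convex compacts arising in our applications one in fact has $\bigcup_n nK=\mathrm{span}(K)$), and both $S$ and $\iota$ are Lipschitz, the identity $S|_Y=\iota$ extends to all of $Y$.

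Finally, let $L:Y^{**}\to Y$ be a bounded linear projection with $\|L\|\le\lambda_1$, provided by the hypothesis that $Y$ is $\lambda_1$-complemented in $Y^{**}$, and set $\widetilde{R}:=L\circ S:X\to Y$. As the composition of a $\lambda_2$-Lipschitz map with a $\lambda_1$-bounded linear map, $\widetilde{R}$ is $\lambda_1\lambda_2$-Lipschitz, and $S|_Y=\iota$ yields $\widetilde{R}|_Y=L\circ\iota=\mathrm{Id}_Y$. Hence $\widetilde{R}$ is a $\lambda_1\lambda_2$-Lipschitz retraction from $X$ onto $Y$, and Corollary~7.3 of \cite{BL2000} upgrades it to a bounded linear projection of $X$ onto $Y$ of norm at most $\lambda_1\lambda_2$. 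The only delicate step is the verification that $S|_Y=\iota$, which rests on the density in $Y$ of the cone $\bigcup_n nK$; this is automatic whenever $K$ is symmetric, covering all the relevant applications in the paper.
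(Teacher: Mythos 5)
Your argument follows the paper's proof step for step: scale the retraction to $R_n(x)=nR(x/n)$, pass to an ultrafilter limit $S:X\to Y^{**}$, compose with the projection $L$, and invoke Corollary 7.3 of \cite{BL2000}. All of the routine verifications you supply (that $R_n$ is a $\lambda_2$-Lipschitz retraction onto $nK$, that $mK\subset nK$ for $m\le n$, the uniform bound $\|R_n(x)\|\le\lambda_2\|x\|$, weak$^*$ lower semicontinuity of the norm giving $\mathrm{Lip}(S)\le\lambda_2$, and $S=\iota$ on $\bigcup_n nK$) are correct.

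The caveat you flag at the end is, however, a genuine restriction, and it is worth saying plainly that it points at a step the paper's own proof skips. For a convex $K$ containing $0$, the set $\bigcup_n nK$ is only a convex cone; it is a dense linear subspace of $Y=\overline{\mathrm{span}}(K)$ when $K$ is symmetric, but in general it need not be dense in $Y$ (take $K=\mathrm{conv}\{0,e_1,e_2\}$ in a Hilbert space: the cone is the closed positive quadrant of $\mathrm{span}\{e_1,e_2\}$). In that example, with $R$ the nearest point map onto $K$, one has $R_n(-e_1)=nR(-e_1/n)=0$ for every $n$, so $S(-e_1)=0\ne -e_1$: the assertion that $S$ is the identity on all of $Y$ --- which the paper dismisses as ``standard to check'' --- fails for non-symmetric $K$, and $\widetilde{R}=L\circ S$ is then not a retraction onto $Y$. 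So what you have actually proved is the theorem under the additional hypothesis that $\overline{\bigcup_n nK}=Y$ (in particular for symmetric $K$), which, as you note, covers every use made of the theorem in the paper, since the sets $K=\overline{\mathrm{co}}\big(\bigcup_k r_kB_{X_k}\big)$ appearing there are symmetric. To obtain the statement in its stated generality one would need a further idea (or one should simply strengthen the hypothesis to $K=-K$, or to $\overline{\bigcup_n nK}=Y$); neither your write-up nor the paper's proof supplies such an idea.
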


\begin{corollary}
Let $X$ be a separable Banach space and $\lambda>0$. Then $X$ has the Lipschitz $\pi_\lambda$-property if and only if  it has the $\pi_\lambda$-property.
\begin{proof}
It is straightforward from Proposition \ref{conmutingpi} and Theorem \ref{complemented}.
\end{proof}
\end{corollary}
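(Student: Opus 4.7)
\emph{Forward direction.} If $X$ has the $\pi_\lambda$-property, Proposition \ref{conmutingpi} provides commuting $\lambda$-bounded finite rank projections $(S_n)$ with $S_m S_n = S_n$ for $m \ge n$ and $S_n x \to x$ pointwise. The subspaces $C_n := S_n(X)$ are finite-dimensional (hence convex), increasing (by the commuting relation $S_m S_n = S_n$), and have dense union in $X$; each $S_n$ itself is a linear $\lambda$-Lipschitz retraction of $X$ onto $C_n$. Thus $X$ has the Lipschitz $\pi_\lambda$-property.

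\emph{Reverse direction.} Suppose the Lipschitz $\pi_\lambda$-property is witnessed by $(C_n, R_n)$. Pick any $c \in C_1$; by monotonicity $c \in C_n$ for all $n$. Translate via $\tilde R_n(x) := R_n(x+c) - c$ to obtain $\lambda$-Lipschitz retractions onto the increasing convex sets $\tilde C_n := C_n - c$, each containing $0$. Each $Y_n := \overline{\mathrm{span}}(\tilde C_n)$ is finite-dimensional, hence reflexive and trivially $1$-complemented in its bidual, so Theorem \ref{complemented} applied with $\lambda_1 = 1$, $\lambda_2 = \lambda$ produces a linear projection $P_n : X \to Y_n$ with $\|P_n\| \le \lambda$.

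\emph{Strong convergence and the main technical point.} The $Y_n$ are increasing and (after appropriate choice of $c$) have dense union in $X$, inherited from $X = \overline{\bigcup_n \mathrm{span}(C_n)}$. For $m \le n$ and $y \in Y_m \subseteq Y_n$, $P_n y = y$, so given $x \in X$ and $\varepsilon > 0$, picking $m$ and $y \in Y_m$ with $\|x - y\| < \varepsilon/(1+\lambda)$ gives $\|P_n x - x\| \le (1+\lambda)\|x-y\| < \varepsilon$ for all $n \ge m$. Hence $P_n x \to x$, and $(P_n)$ witnesses the $\pi_\lambda$-property. The subtlety to watch is that a naive translation by $c$ can shrink $\overline{\mathrm{span}}(C_n)$ by the line $\mathbb{R} c$, potentially losing one dimension of density; this is handled either by choosing $c$ so that $c \in Y_n$ for large $n$ (for instance in the relative interior of some $C_{n_0}$), or by a rank-one augmentation of $P_n$ along $c$ using a fixed $\phi \in X^*$ with $\phi(c)=1$, whose norm contribution is controlled since $c$ stays at positive distance from the relevant hyperplane. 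This is the only delicate point; with it dispatched, the two properties coincide exactly.
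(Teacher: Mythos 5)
Your forward direction is exactly the intended use of Proposition \ref{conmutingpi}: take $C_n=S_n(X)$, which are increasing finite-dimensional subspaces with dense union, and the $S_n$ themselves are linear $\lambda$-Lipschitz retractions. Your reverse direction also follows the route the paper has in mind (apply Theorem \ref{complemented} to each $Y_n$, which is $1$-complemented in its bidual, then run the standard density argument to get strong convergence of the resulting projections). In fact you go beyond the paper's one-line proof by noticing that the definition of the Lipschitz $\pi_\lambda$-property does not require $0\in C_n$, so that Theorem \ref{complemented} (which needs $K\ni 0$) only applies after a translation, and translation can replace $\operatorname{span}(C_n)$ by the direction space $\operatorname{span}(C_n-c)=\operatorname{aff}(C_n)-c$, which may be one dimension short. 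This is a real subtlety that the paper glosses over.

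Neither of your two proposed repairs fully closes it, however. Choosing $c$ in the relative interior of some $C_{n_0}$ does not guarantee $c\in\operatorname{span}(C_{n_0}-c)$: take $C_{n_0}=\{1\}\times[-1,1]\subset\mathbb{R}^2$ and $c=(1,0)$; then $\operatorname{span}(C_{n_0}-c)$ is the vertical axis and misses $c$. (Choosing $c_n$ in the relative interior of $C_n$ is nevertheless advisable for a different reason: it makes $\operatorname{cone}(C_n-c_n)=\operatorname{span}(C_n-c_n)$, which is what the ultraproduct map in the proof of Theorem \ref{complemented} actually needs in order to be the identity on all of $Y$.) Your rank-one augmentation $Q_n=P_n+\phi(\cdot)(c-P_nc)$ does produce finite-rank projections onto $V_n\oplus\mathbb{R}c$ converging strongly to the identity, but its norm is only bounded by $\lambda+\|\phi\|(1+\lambda)\|c\|$, not by $\lambda$. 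So in the degenerate case your argument yields the $\pi_\mu$-property for some $\mu>\lambda$ rather than the $\pi_\lambda$-property, whereas the corollary asserts the exact constant. If $0\in C_n$ for all $n$ (or, more generally, if $\overline{\bigcup_n\operatorname{span}(C_n-c_n)}=X$), your proof is complete with the stated constant; the residual quantitative gap in the remaining case is one the paper's own ``straightforward'' proof does not address either.
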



\section{Compacts without Lipschitz retractions}

We proceed by constructing an example of a small convex and compact $K$  in $C[0,1]$, which is contained in a small GCCR, contains a small GCCR, and yet
there is no Lipschitz retraction onto $K$.
The idea behind the construction can be described as follows.  The GCCR constructed at the beginning of our note are well "aligned" with
the FDD on $X$, and in the proof that the smallness condition of GCCR implies the $\pi$-property the projections are also aligned
with the structure of the compact. So our strategy is to employ badly complemented finite dimensional subspaces (in fact, Hilbert spaces) of
$C[0,1]$ as the sections of the sought compact $K$. In order to glue the decreasing sequence of these pieces together, we
use the $\mathcal{L}_\infty$-FDD in $C[0,1]$. We start with a standard argument.

\begin{lemma}\label{fddjohnson}

For any $\ep>0$ there is an FDD in $C[0,1]$ with associated projections $P_n$, and a sequence $(a_n)\subset C[0,1]$, $(1+\ep)$-equivalent to the $\ell_2$
basis such that
\begin{itemize}
\item $\sup\limits_{n\in \N}d(P_n(C[0,1]),\ell_\infty^{d(n)})<\infty$ where $d(n)=dim(P_n(C[0,1]))$.
\item $a_n\in (P_n-P_{n-1})(C[0,1])$ for every $n\in\N$.
\end{itemize}
\begin{proof}
By Remark 5.2 of \cite{Joh2} there is an FDD $(X_n)$ in $C[0,1]$ such that 
\begin{equation}\label{dd}
\sup\limits_{n\in\N}d(\tilde Q_n(C[0,1]),\ell_\infty^{d(n)})=d<\infty,
\end{equation}
where the $\tilde Q_n$ are the natural projections of the FDD given by $\tilde Q_n(\sum_{i=1}^\infty x_i)=\sum_{i=1}^n x_i$.
Take an $\ell_2$-basis $(b_n)_{n\in\N}\subset C[0,1]$, which is certainly a $w^*$-null normalized sequence. For a given $\ep>0$ apply the standard 
Bessaga-Pelczynski blocking principle (e.g. \cite{Fab1} p. 194) to obtain a subsequence $(b_{k(n)})$ of the original $\ell_2$ basis,
and its perturbed companion $(a_n)$, $\sum_{n=1}^\infty||a_n-b_{k(n)}||<\ep$, such that $a_n\in(\tilde Q_{\eta(n)}-\tilde Q_{\eta(n-1)})(C[0,1])$,
for some increasing sequence of indices $(\eta(n))$. To finish the proof it remains to let $\big((P_n-P_{n-1})(C[0,1])\big)$
be the desired FDD, where $P_n=\tilde Q_{\eta(n)}$.
\end{proof}
\end{lemma}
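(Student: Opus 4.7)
The plan is to combine a standard $\mathcal{L}_\infty$-type FDD on $C[0,1]$ with a Bessaga--Pelczynski gliding-hump extraction of a weakly null $\ell_2$-basis. First, I would invoke the classical fact (for instance as recorded in Remark~5.2 of \cite{Joh2}) that $C[0,1]$ carries an FDD $(X_n)$ whose natural partial-sum projections $\tilde Q_n$ have ranges uniformly close in Banach--Mazur distance to $\ell_\infty^{d(n)}$. Any FDD that is a coarsening of $(X_n)$, meaning obtained by grouping consecutive $X_n$'s into larger blocks, inherits the same uniform $\mathcal{L}_\infty$ bound on its partial-sum ranges, since those partial sums coincide with a subsequence of the $\tilde Q_n(C[0,1])$. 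So the first bullet will be automatic once I know how to coarsen.

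Next, I need an $\ell_2$-seed inside $C[0,1]$: since $C[0,1]$ is universal for separable Banach spaces, it contains a normalized sequence $(b_n)$ that is $1$-equivalent to the unit vector basis of $\ell_2$. After passing to a subsequence I may assume $(b_n)$ is weakly null, hence $w^\ast$-null when viewed coordinate-wise against $C[0,1]^\ast$; in particular $\|\tilde Q_m(b_n)\|\to 0$ as $n\to\infty$ for each fixed $m$, and $\|b_n-\tilde Q_m(b_n)\|\to 0$ as $m\to\infty$ for each fixed $n$.

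The technical heart is a standard gliding hump. Fix a summable sequence $(\eta_n)$ of positive numbers with $\sum_n \eta_n$ so small that any sequence $(a_n)$ satisfying $\sum_n\|a_n-b_{k(n)}\|<\sum_n\eta_n$ is $(1+\ep)$-equivalent to $(b_{k(n)})$ and hence to the $\ell_2$-basis; this is the classical small-perturbation principle for basic sequences, and the tolerance depends only on the (fixed) basis constant of $(b_n)$. Proceeding inductively, I would alternate two choices: given indices $\eta(1)<\cdots<\eta(n-1)$ and $k(1)<\cdots<k(n-1)$, first pick $k(n)$ so large that $\|\tilde Q_{\eta(n-1)}(b_{k(n)})\|<\eta_n/2$, and then pick $\eta(n)>\eta(n-1)$ so large that $\|b_{k(n)}-\tilde Q_{\eta(n)}(b_{k(n)})\|<\eta_n/2$. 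Setting $a_n=(\tilde Q_{\eta(n)}-\tilde Q_{\eta(n-1)})(b_{k(n)})$ gives $a_n\in(\tilde Q_{\eta(n)}-\tilde Q_{\eta(n-1)})(C[0,1])$ together with $\|a_n-b_{k(n)}\|<\eta_n$.

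Finally, define the coarsened FDD by $(P_n-P_{n-1})(C[0,1])=(\tilde Q_{\eta(n)}-\tilde Q_{\eta(n-1)})(C[0,1])$, so that $P_n=\tilde Q_{\eta(n)}$. Then each $a_n$ lies in a single block $(P_n-P_{n-1})(C[0,1])$, the sequence $(a_n)$ is $(1+\ep)$-equivalent to the $\ell_2$-basis by the perturbation principle, and the first bullet follows from the uniform Banach--Mazur bound on $(\tilde Q_{\eta(n)}(C[0,1]))$. The only genuine obstacle is the quantitative calibration in the perturbation step, which reduces to choosing $(\eta_n)$ small enough relative to the basis constant of $(b_n)$ to force the final equivalence constant into $(1+\ep)$; this is purely arithmetic once one writes out the standard perturbation inequality, so no real difficulty remains.
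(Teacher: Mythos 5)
Your proposal is correct and follows essentially the same route as the paper: the $\mathcal{L}_\infty$-FDD from Remark~5.2 of \cite{Joh2}, a weakly null $\ell_2$-basis obtained from universality, and a Bessaga--Pelczynski blocking/gliding-hump extraction with the small-perturbation principle, followed by coarsening the FDD via $P_n=\tilde Q_{\eta(n)}$. The only difference is that you spell out the gliding-hump induction that the paper cites as a black box.
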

To simplify the notation, let us put $X=C[0,1]$ and let $P_n$ and $(a_n)$ be given by Lemma \ref{fddjohnson} for some fixed $\ep\in(0,1)$. Now define the function $\sigma:\N\rightarrow\N$, so that $\sigma(1)=1$ and $\sigma(n)=\sigma(n-1)+n$ for every $n\ge2$. Then, letting $Q_n=P_{\sigma(n)}$ and $X_n=(Q_n-Q_{n-1})(X)$ we have that $Y_n=span(\{a_{\sigma(n-1)+1},\dots,a_{\sigma(n)}\})$ is a subspace of $X_n$, $(1+\ep)$-isometric to $\ell_2^n$. From now on  in this section we are going to denote $E_n=Q_n(X).$\\

We define for every $n\in\N$ and $\delta>0$ the set 
$$B_n^\delta=\Big\{x\in X_n\;:\;d\big(x,B_{Y_n}\big)\le\delta\Big\}.$$
It is clear that $B_n^\delta$ is a compact convex subset of $(1+\delta)B_{X_n}$ that generates $X_n$. We will need the following  results to continue with our construction.

\begin{lemma}\label{lemmaballstospace}
Let $d$ be as in \eqref{dd} . For every $n\in\N$, if there is a Lipschitz retraction $\varphi:B_{E_n}\rightarrow B_{Y_n}$, then there is a Lipschitz retraction $\psi:\ell_{\infty}\rightarrow \ell_2^n$  satisfying the following inequality
$$||\psi||_{Lip}\le\frac{2d||\varphi||_{Lip}}{1-\ep}.$$
\begin{proof}
By composing $\varphi$ with the norm $2$ radial retraction from $E_n$ onto $B_{E_n}$ we have a retraction $\widehat{\varphi}: E_n\to B_{Y_n}$ with
$||\widehat{\varphi}||_{Lip}\le2||\varphi||_{Lip}$.
Using the technique in the proof of Theorem \ref{complemented}, since $Y_n$ is finite dimensional, there exists even a linear projection
 $\widetilde{\varphi}:E_n\to Y_n$ of norm $||\widetilde{\varphi}||\le2||\varphi||_{Lip}$.
Since $E_n$ is $d$-isometric to $\ell_\infty^M$, and $Y_n$ is $(1+\ep)$ isometric to $\ell_2^n$ the desired estimate follows.
\end{proof}
\end{lemma}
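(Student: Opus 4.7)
The plan is to reduce the Lipschitz retraction onto the convex body $B_{Y_n}$ to a linear projection onto $Y_n$, and then transport it to $\ell_\infty$ using the $d$-isomorphism $E_n\cong \ell_\infty^M$ and the $(1+\ep)$-isomorphism $Y_n\cong \ell_2^n$. First I would extend $\varphi$ from the ball $B_{E_n}$ to the whole space $E_n$ by pre-composing with the radial retraction $r:E_n\to B_{E_n}$, $r(x)=x/\max\{1,||x||\}$, which is $2$-Lipschitz. The resulting map $\widehat{\varphi}=\varphi\circ r$ is a Lipschitz retraction from $E_n$ onto the convex set $B_{Y_n}$ (which contains $0$ and has closed linear span $Y_n$) with $||\widehat{\varphi}||_{Lip}\le 2||\varphi||_{Lip}$.

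Since $Y_n$ is finite-dimensional, it is $1$-complemented in its bidual, so Theorem \ref{complemented} (applied with $\lambda_1=1$, $\lambda_2=2||\varphi||_{Lip}$ and $K=B_{Y_n}$) produces an actual linear projection $\widetilde{\varphi}:E_n\to Y_n$ with $||\widetilde{\varphi}||\le 2||\varphi||_{Lip}$. Now I transport this projection to the model spaces. Fix the $d$-isomorphism $T:E_n\to \ell_\infty^M$ from \eqref{dd} normalized so that $||T||=1$ and $||T^{-1}||\le d$; the $(1+\ep)$-isomorphism $U:Y_n\to \ell_2^n$ normalized so that $||U^{-1}||=1$ and $||U||\le 1+\ep$; and the canonical coordinate $1$-projection $P:\ell_\infty\to\ell_\infty^M$. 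Set
$$\psi := U\circ \widetilde{\varphi}\circ T^{-1}\circ P \;:\; \ell_\infty\longrightarrow \ell_2^n.$$
We identify $\ell_2^n$ as a subspace of $\ell_\infty$ via the embedding $\iota\circ T\circ U^{-1}$, where $\iota:\ell_\infty^M\hookrightarrow\ell_\infty$ is the standard inclusion (which is fixed by $P$). On this copy $\psi$ acts as the identity because $\widetilde{\varphi}|_{Y_n}=\mathrm{id}$, so $\psi$ is a retraction. Multiplying operator norms,
$$||\psi||_{Lip}\le ||U||\cdot||\widetilde{\varphi}||\cdot||T^{-1}||\cdot||P||\le (1+\ep)\cdot 2||\varphi||_{Lip}\cdot d\cdot 1\le \frac{2d||\varphi||_{Lip}}{1-\ep},$$
where the last inequality uses $(1+\ep)(1-\ep)\le 1$.

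The main technical point is the linearization in the second paragraph: passing from a Lipschitz retraction onto a convex body whose closed linear span is the finite-dimensional $Y_n$ to an \emph{honest} linear projection with only a factor-$\lambda_1=1$ loss. This is exactly what the ultraproduct/averaging argument behind Theorem \ref{complemented} provides, using that $Y_n$ is reflexive. Everything after that---conjugating by $T$, extending to $\ell_\infty$ via $P$, and identifying with $\ell_2^n$ via $U$---is straightforward book-keeping of the isomorphism constants.
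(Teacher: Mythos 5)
Your proof follows the paper's argument essentially verbatim: extend $\varphi$ to all of $E_n$ by the $2$-Lipschitz radial retraction, linearize via Theorem \ref{complemented} (using that the finite-dimensional $Y_n$ is $1$-complemented in its bidual) to get a linear projection of norm at most $2||\varphi||_{Lip}$, and transport it through the isomorphisms $E_n\approx_d\ell_\infty^M$ and $Y_n\approx_{1+\ep}\ell_2^n$. The only difference is that you spell out the last step explicitly (normalizing $||T||=||U^{-1}||=1$ and composing with the coordinate projection $P:\ell_\infty\to\ell_\infty^M$), which the paper compresses into ``the desired estimate follows.''
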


\begin{lemma}[Lindenstrauss,'64]\label{lemmaLind}
For every $n\in\N$, if $\psi:\ell_{\infty}\rightarrow\ell^{n}_2$ is a Lipschitz retraction then
$$||\psi||_{Lip}\ge \frac{n^{1/4}}{3}.$$
\begin{proof}
Just use Lemma 1.28 of \cite{BL2000} with $r=n^{1/4}$ and $\ep=1$.
\end{proof}
\end{lemma}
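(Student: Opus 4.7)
The result is a quantitative form of Lindenstrauss's non-complementation theorem for $\ell_2^n$ inside $\ell_\infty$, and the most efficient proof simply quotes a suitable packaged estimate --- specifically, Lemma 1.28 of \cite{BL2000}, which gives a lower bound of the form $||\psi||_{Lip} \ge r/(2+\ep)$ for any Lipschitz retraction $\psi$ satisfying a geometric hypothesis parametrized by a radius $r$ and a tolerance $\ep$. The plan is therefore to plug in the correct parameter values, rather than to reprove the inequality itself.

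My plan is thus: first, identify the correct parameter choice $r=n^{1/4}$ and $\ep=1$; second, verify that this choice is admissible for the given $\psi:\ell_\infty\to\ell_2^n$, which amounts to checking that $\ell_2^n$ contains a sufficiently rich configuration of $n$ orthonormal vectors $e_1,\dots,e_n$ whose sign-hypercube $\{\sum_i\epsilon_i e_i:\epsilon\in\{-1,1\}^n\}$ has pairwise $\ell_2^n$-distances of order $\sqrt{n}$ while remaining well-controlled after the isometric embedding into $\ell_\infty$; third, read off the lower bound $r/(2+\ep) = n^{1/4}/3$ directly from the conclusion of the lemma. The only thing to check is the parameter constraint imposed by Lemma 1.28 with $r=n^{1/4}$, which is mild once $n$ is an arbitrary positive integer.

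If one wanted to reprove Lemma 1.28 from scratch, the natural strategy would be a Rademacher/Gaussian averaging argument exploiting the mismatch between the $\ell_\infty$-diameter and the $\ell_2$-diameter of the sign hypercube, combined with the cotype-$2$ inequality of $\ell_2^n$. The exponent $1/4$ --- rather than $1/2$, which is what one gets for linear projections --- reflects the additional flexibility of Lipschitz retractions, which can partially average out signs, so that the Lipschitz bound ends up being essentially the square root of the linear projection bound. The technical heart is converting the Rademacher averaging estimate into a lower bound on the Lipschitz constant; this is precisely what Lemma 1.28 encapsulates in a ready-to-use form, and would be the main obstacle to a self-contained proof.
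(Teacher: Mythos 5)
Your proposal is correct and coincides with the paper's own proof, which likewise consists of invoking Lemma 1.28 of \cite{BL2000} with the parameters $r=n^{1/4}$ and $\ep=1$ and reading off the bound $n^{1/4}/3$. The additional commentary on how one would reprove that lemma via Rademacher averaging is extra background, not part of the argument the paper gives.
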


\begin{prop}\label{propbasecount}
There exist  sequences $(M_n),(\delta_n)\subset\R^+$, $M_n\to\infty,\;\delta_n\to0$, such that for every $n\in\N$ there is no $M_n$-Lipschitz retraction from $B_{E_n}$ onto $B_n^{\delta_n}$.
\begin{proof}
It suffices to let
$$M_n=\frac{n^{1/4}(1-\ep)}{25d}\;\;\;\;\forall n\in\N.$$
Now, for a fixed $n\in\N$ we are going to prove the existence of $\delta_n>0$ satisfying the statement of the Proposition by contradiction. Suppose that for every $\delta>0$ there exists a retraction $\phi^\delta:B_{E_n}\rightarrow B_n^{\delta}$ satisfying $||\phi^\delta||_{Lip}\le M_n$. Then we define $N:E_n\rightarrow B_{E_n}$ the radial projection, and $C^\delta:B_n^{\delta}\rightarrow B_{Y_n}$ a nearest point map. The map $\psi^{\delta}=C^\delta\circ \phi^\delta\circ N:E_n\rightarrow B_{Y_n}$ satisfies the following inequality
$$||\psi^{\delta}(x)-\psi^{\delta}(y)||\le2M_n\bigg(||x-y||+\frac{\delta}{M_n}\bigg),$$
so we are allowed to use the Proposition of \cite{B99}. Then, for every $\tau>0$ denoting by $\chi_\tau$ the indicator function of $\tau B_{E_n}$ we have that $\varphi^{\delta,\tau}=\psi^{\delta}*\chi_{\tau}:B_{E_n}\rightarrow B_{Y_n}$ is a Lipschitz map satisfying
$$||\varphi^{\delta,\tau}(x)-\psi^\delta(x)||\le2M_n\bigg(\tau+\frac{\delta}{M_n}\bigg)\;\;\;\forall x\in B_{E_n},$$
$$||\varphi^{\delta,\tau}||_{Lip}\le 2M_n\bigg( 1+\frac{\delta N_n}{2\tau M_n} \bigg).$$
Let us take a sequence $(\delta_k)\subset \R^+$ decreasing to 0, and put $\tau_k=\frac{\delta_kN_n}{2M_n}$ for every $k\in\N$. Denoting $\varphi_k=\varphi^{\delta_k,\tau_k}$ we have that $\varphi_k$ pointwise converge to a retraction $\varphi:B_{E_n}\rightarrow B_{Y_n}$ with the norm $||\varphi||_{Lip}\le 4M_n$. By Lemma \ref{lemmaballstospace} there exists a Lipschitz retraction $\psi:\ell_{\infty}\rightarrow \ell_p^n$ such that
$$||\psi||_{Lip}\le\frac{2d||\varphi||_{Lip}}{1-\ep}\le\frac{8dM_n}{1-\ep}=\frac{8}{25}n^{1/4}<\frac{n^{1/4}}{3}.$$
This contradicts Lindenstrauss' Lemma \ref{lemmaLind}.
\end{proof}
\end{prop}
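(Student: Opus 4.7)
The plan is to argue by contradiction, reducing the problem to showing that if arbitrarily small fattenings $B_n^\delta$ admit $M_n$-Lipschitz retractions from $B_{E_n}$, then we can extract a genuine Lipschitz retraction of $B_{E_n}$ onto $B_{Y_n}$ with controlled Lipschitz constant, and then lift via Lemma \ref{lemmaballstospace} to a Lipschitz retraction of $\ell_\infty$ onto $\ell_2^n$ with Lipschitz norm below the Lindenstrauss bound $n^{1/4}/3$ from Lemma \ref{lemmaLind}.

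Concretely, I would set $M_n=\frac{n^{1/4}(1-\ep)}{25d}$ (so $M_n\to\infty$), fix $n\in\N$, and suppose for contradiction that for every $\delta>0$ there is a retraction $\phi^\delta:B_{E_n}\to B_n^\delta$ with $||\phi^\delta||_{Lip}\le M_n$. To collapse the fattened target $B_n^\delta$ onto the genuine Hilbert ball $B_{Y_n}$, I would compose $\phi^\delta$ with a nearest-point selection $C^\delta:B_n^\delta\to B_{Y_n}$ (available since $B_{Y_n}$ is compact and convex, though $C^\delta$ need not be continuous) and with the $2$-Lipschitz radial retraction $N:E_n\to B_{E_n}$. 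The resulting map $\psi^\delta=C^\delta\circ\phi^\delta\circ N:E_n\to B_{Y_n}$ is not exactly a retraction, but it is $\delta$-close to one on $B_{Y_n}$, and it satisfies a coarse Lipschitz inequality of the form
$$||\psi^\delta(x)-\psi^\delta(y)||\le 2M_n||x-y||+2\delta,$$
since $||u-C^\delta(u)||\le\delta$ for every $u\in B_n^\delta$.

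The crux is converting this coarse Lipschitz map into a true Lipschitz one. I would invoke the Bourgain/Begun smoothing procedure (the Proposition of \cite{B99}) at a scale $\tau>0$ adapted to $\delta$, obtaining a Lipschitz map $\varphi^{\delta,\tau}:B_{E_n}\to B_{Y_n}$ (convexity of $B_{Y_n}$ keeps the convolution-style average inside the target) whose Lipschitz norm is $2M_n(1+O(\delta/\tau))$ and whose sup-distance to $\psi^\delta$ is $O(\tau+\delta/M_n)$. Choosing $\tau_k$ linearly proportional to $\delta_k\downarrow 0$ keeps the Lipschitz norm bounded by, say, $4M_n$ while driving the approximation defect to zero. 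Since $B_{E_n}$ is finite-dimensional, a standard compactness/Arzel\`a--Ascoli argument extracts a pointwise limit $\varphi:B_{E_n}\to B_{Y_n}$ that is a genuine $4M_n$-Lipschitz retraction.

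Plugging $\varphi$ into Lemma \ref{lemmaballstospace} yields a Lipschitz retraction $\psi:\ell_\infty\to\ell_2^n$ with
$$||\psi||_{Lip}\le\frac{2d\cdot 4M_n}{1-\ep}=\frac{8}{25}\,n^{1/4}<\frac{n^{1/4}}{3},$$
contradicting Lemma \ref{lemmaLind}. I expect the most delicate step to be the smoothing stage: one must control the Lipschitz norm and the approximation defect simultaneously and uniformly in $\delta$, keep the smoothed map inside the convex target $B_{Y_n}$, and verify that the limit $\varphi$ actually fixes every point of $B_{Y_n}$ (rather than just approximately). Once this is arranged, the numerical contradiction follows cleanly from the choice of $M_n$.
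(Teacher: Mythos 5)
Your proposal is correct and follows essentially the same route as the paper's own proof: the same choice of $M_n$, the same composition $C^\delta\circ\phi^\delta\circ N$, the same Bourgain--Begun smoothing with $\tau_k$ proportional to $\delta_k$, and the same final contradiction via Lemma \ref{lemmaballstospace} and Lindenstrauss' Lemma \ref{lemmaLind}. The delicate points you flag (the smoothed map staying in the convex target, the limit fixing $B_{Y_n}$) are exactly the ones the paper handles implicitly, and your sketch resolves them the same way.
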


Finally, we are ready to construct nonretractable convex subsets of $C[0,1]$. Note that if $\lambda_n$ below are decreasing sufficiently fast then the resulting
set $K$ will be small.

\begin{theorem}\label{counterex}
For every sequence $(\lambda_n)\subset \R^+$, the subset of $C[0,1]$ given by
$$K=\overline{\co}\bigg(\bigcup\limits_{n\in\N}\lambda_nB_n^{\delta_n}\bigg)$$
is not Lipschitz retractable.
\begin{proof}
Suppose there exists a Lipschitz retraction $\phi:C[0,1]\rightarrow K$ and let $||\phi||_{Lip}=L$. Then, there is an $n\in\N$ such that $L<\frac{M_n}{2M}$ where $M$ is the constant of the FDD $(Q_n)$, that is, $||Q_n||\le M$ for every $n\in\N$. Now, the retraction $R_n=(Q_n-Q_{n-1})\circ\restr{\phi}{E_n}:E_n\rightarrow \lambda_nB_n^{\delta_n}$ has norm $||R_n||_{Lip}\le2ML<M_n$. Finally, the retraction $F_n:E_n\rightarrow B_n^{\delta_n}$ given by $F_n(x)=\lambda_n^{-1}R_n(\lambda_nx)$ for every $x\in E_n$ has norm $||F_n||_{Lip}\le||R_n||_{Lip}<M_n$ which contradicts Proposition \ref{propbasecount}.
\end{proof}
\end{theorem}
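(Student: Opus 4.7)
The plan is to proceed by contradiction. Assume a Lipschitz retraction $\phi: C[0,1] \to K$ exists with Lipschitz constant $L$. The idea is to use the FDD structure to extract from $\phi$ a retraction onto the single ``slice'' $\lambda_n B_n^{\delta_n}$ of $K$ sitting inside $X_n$, and then, after rescaling, obtain a retraction of $E_n$ onto $B_n^{\delta_n}$ whose Lipschitz norm is controlled by $2ML$, independently of $n$. Since the lower bounds $M_n$ from Proposition \ref{propbasecount} diverge, choosing $n$ large enough will force a contradiction.

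Concretely, I would define
$$R_n := (Q_n - Q_{n-1}) \circ \restr{\phi}{E_n} : E_n \longrightarrow X_n.$$
Two things must be checked. First, $R_n$ is a retraction onto $\lambda_n B_n^{\delta_n}$: if $x \in \lambda_n B_n^{\delta_n} \subset X_n \subset K$, then $\phi(x) = x$, and since $x \in X_n$ the projection $Q_n - Q_{n-1}$ fixes it. Second, the image of $R_n$ actually lies in $\lambda_n B_n^{\delta_n}$. For this one observes that every element of $K$ is a limit of convex combinations $\sum_i c_i y_i$ with $y_i \in \lambda_{m_i} B_{m_i}^{\delta_{m_i}} \subset X_{m_i}$; applying the FDD projection $Q_n - Q_{n-1}$ annihilates all summands with $m_i \neq n$, leaving $\sum_{m_i = n} c_i y_i$. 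Since $0 \in \lambda_n B_n^{\delta_n}$ and this set is convex and compact, such a sub-convex-combination lies in $\lambda_n B_n^{\delta_n}$; closedness of $\lambda_n B_n^{\delta_n}$ handles the limit.

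The Lipschitz estimate is then straightforward: $\|Q_n - Q_{n-1}\| \le 2M$ where $M$ is the FDD constant from Lemma \ref{fddjohnson}, so $\|R_n\|_{Lip} \le 2ML$. Defining $F_n : E_n \to B_n^{\delta_n}$ by $F_n(x) = \lambda_n^{-1} R_n(\lambda_n x)$, one obtains a Lipschitz retraction of $E_n$ onto $B_n^{\delta_n}$ with the same Lipschitz norm $\le 2ML$. Now select any $n \in \N$ with $M_n > 2ML$, which is possible since $M_n \to \infty$ by Proposition \ref{propbasecount}. This produces a retraction violating the bound in Proposition \ref{propbasecount}, the desired contradiction.

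The substantive step is the image verification: one must be sure that the FDD projection of $K$ into $X_n$ does not escape $\lambda_n B_n^{\delta_n}$, as otherwise $R_n$ would only be a Lipschitz map into $X_n$ rather than a retraction with the right codomain. Everything else is a bookkeeping assembly of the lower bound from Proposition \ref{propbasecount} with the uniform upper bound $2ML$ coming from the FDD constant of $C[0,1]$.
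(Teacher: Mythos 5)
Your proposal is correct and follows essentially the same argument as the paper: define $R_n=(Q_n-Q_{n-1})\circ\restr{\phi}{E_n}$, bound its Lipschitz norm by $2ML$, rescale to get a retraction onto $B_n^{\delta_n}$, and contradict Proposition \ref{propbasecount} by taking $n$ with $M_n>2ML$. Your explicit verification that $(Q_n-Q_{n-1})(K)\subset\lambda_nB_n^{\delta_n}$ (via sub-convex combinations and $0\in\lambda_nB_n^{\delta_n}$) is a detail the paper leaves implicit, and it is carried out correctly.
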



\section{Nearest point map}

 The radial projection mapping from a Banach space $X$  onto $B_X$ can be shown to have a Lipschitz constant at most $2$, \cite{DW64}.

 Figueiredo and Karlovitz \cite{FK67} proved that a real normed linear space of dimension $3$ or higher is an inner product space
 if and only if the radial projection onto the unit ball is nonexpansive (see also  \cite{Phe58}, \cite{Sch65}).

For more general sets $C$ a natural candidate for the retraction mapping is the nearest point map (sometimes also called the proximity map,
of which the radial projection is a special case), provided the nearest point is unique. Again, it turns out that
such a mapping is nonexpansive for every closed convex set if and only if the space is Hilbertian \cite{Phe58}. In fact,
in the papers  \cite{BR74}
\cite{FK70}, the following characterization is shown.
 Let $X$ be a strictly convex Banach space with $3\le dim(X)$; then there exists a nonexpansive retraction onto the unit ball if and only if $X$ is a Hilbert space. 
The restriction to dimension at least $3$ is necessary.
Karlovitz \cite{Kar72} showed that if $X$ is $2$-dimensional then a non-expansive retraction onto any closed convex set always exists and it can be realized
 as a proximity mapping with respect to a new norm.

Let us begin our last section with
a new characterization of the Hilbert space in the spirit of the above results.

\begin{theorem}
Let $X$ be a Banach space. Then $X$ is isomorphic to a Hilbert space if and only if  every convex and compact subset $K\subset X$ is a Lipschitz retract
of $X$. 
\end{theorem}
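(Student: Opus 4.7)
The forward implication is standard: if $T : X \to H$ is a linear isomorphism with a Hilbert space and $K \subset X$ is convex and compact, then $T(K)$ is convex and compact in $H$, the classical nearest point projection $\pi : H \to T(K)$ is $1$-Lipschitz, and so $T^{-1} \circ \pi \circ T : X \to K$ is a Lipschitz retraction with constant at most $\|T\|\,\|T^{-1}\|$.

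For the converse, the plan is to invoke the Lindenstrauss--Tzafriri theorem, which characterises Hilbertian spaces as those in which every closed subspace is complemented. So I need to produce, for every closed subspace $Y \subset X$, a bounded linear projection $X \to Y$. For a finite-dimensional $E \subset X$ the hypothesis applied to $B_E$ gives a Lipschitz retraction $R_E : X \to B_E$, which I would promote to a Lipschitz retraction $R'_E : X \to E$ onto the full subspace via the positive-homogeneous extension $R'_E(x) := \|x\|\, R_E(x/\|x\|)$ for $x \neq 0$ and $R'_E(0) := 0$. Since $R_E$ is the identity on $B_E$, the extension $R'_E$ is the identity on all of $E$, and using the standard estimate
\[
\left\| \frac{x}{\|x\|} - \frac{y}{\|y\|} \right\| \le \frac{2 \|x-y\|}{\max\{\|x\|, \|y\|\}},
\]
one checks that $\|R'_E\|_{\mathrm{Lip}} \le 2\|R_E\|_{\mathrm{Lip}} + 1$. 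Then \cite{HM82} yields a bounded linear projection $P_E : X \to E$ of norm at most $2\|R_E\|_{\mathrm{Lip}} + 1$.

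For a closed infinite-dimensional $Y \subset X$, assumed separable without loss of generality, I would pick a convex compact generator $K \subset Y$---for instance $K = \overline{\co}\{\pm r_n y_n : n \in \N\}$ with $(y_n)$ dense in $B_Y$ and $(r_n)$ decreasing fast enough for compactness. The hypothesis provides a Lipschitz retraction $X \to K \subset Y$, and Theorem \ref{complemented} then promotes it to a bounded linear projection $X \to Y$, under the proviso that $Y$ is linearly complemented in $Y^{**}$. The main obstacle, and in my view the crux of the argument, is disposing of this proviso for arbitrary closed $Y$. The cleanest route is to first establish that $X$ itself is reflexive, which would make every closed subspace reflexive and hence trivially complemented in its bidual; reflexivity could in turn be approached via a Baire category argument on the complete metric space $(\mathcal{K}(X), d_H)$ of convex compact subsets of $X$ with the Hausdorff metric, setting $\mathcal{F}_n := \{K \in \mathcal{K}(X) : X \text{ admits an } n\text{-Lipschitz retraction onto } K\}$. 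The closedness of each $\mathcal{F}_n$ follows from an ultrafilter extraction of limiting retractions in the weak-$\ast$ topology of $X^{\ast\ast}$, and once Baire produces some $\mathcal{F}_{n_0}$ with non-empty interior, the finite-dimensional projection-extraction machinery of Theorem \ref{theosmall} applied to perturbed reference compacts $K_0 + \ep B_E$ for a suitable $K_0$ in that interior should yield a uniform bound on the projection constants $\lambda(E, X)$ across all finite-dimensional $E$; Kwapien's theorem (via type $2$ and cotype $2$) then closes the argument. The technically hardest step is propagating the uniform control from compacts near $K_0$ to the unit balls $B_E$ themselves, which requires a careful finite-dimensional surgery separating the contributions of $K_0$ and $B_E$ in the given Lipschitz retraction onto their sum.
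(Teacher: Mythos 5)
Your forward implication and your finite-dimensional step are fine as far as they go, but the converse has a genuine gap, and it sits exactly where you flag it. The projections $P_E$ you extract from the retractions onto $B_E$ come with constants $2\|R_E\|_{\mathrm{Lip}}+1$ depending on $E$, and the hypothesis gives you no uniformity in $E$; since \emph{every} finite-dimensional subspace of \emph{any} Banach space is complemented with some constant (Hahn--Banach, or Kadec--Snobar), this step by itself yields nothing. You are therefore pushed onto the infinite-dimensional branch of Lindenstrauss--Tzafriri, where Theorem \ref{complemented} requires $Y$ to be complemented in $Y^{**}$, and your proposed escape --- first proving reflexivity of $X$ via a Baire category argument on $(\mathcal{K}(X),d_H)$ --- is not carried out: you neither verify that the sets $\mathcal{F}_n$ are closed in a way that survives the passage to $X^{**}$-valued limits (the ultrafilter limit of retractions lands in $K^{**}=K$ only because $K$ is compact, but the resulting map need not witness membership of nearby compacts in $\mathcal{F}_n$), nor do you explain how a set $\mathcal{F}_{n_0}$ with nonempty interior would force uniform bounds on $\lambda(E,X)$; the ``finite-dimensional surgery'' you defer is precisely the content of the theorem. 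As written, the argument does not close.

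The missing idea is to use the \emph{quantitative} form of Lindenstrauss--Tzafriri (it suffices that all finite-dimensional subspaces be $\lambda$-complemented for one $\lambda$) and to encode a hypothetical failure of uniformity into a \emph{single} compact set. Concretely: if $\sup_E\lambda(E,X)=\infty$, build by the Mazur gliding-hump technique a sequence $(E_n)$ of finite-dimensional subspaces forming an FDD of their closed span with projection constant at most $2$ and with $\lambda(E_n,X)>n$, and set $K=\cconv\big(\bigcup_n\tau_nB_{E_n}\big)$ for a summable sequence $(\tau_n)$. A single $\lambda$-Lipschitz retraction $\phi:X\to K$, composed with the canonical retraction of $K$ onto $\tau_nB_{E_n}$ (which is uniformly Lipschitz, with constant at most $6$, because of the FDD structure of $K$), yields retractions onto every $\tau_nB_{E_n}$ with Lipschitz constant at most $6\lambda$, hence, by the linearization step you already have from \cite{HM82}, $\lambda(E_n,X)\le 6\lambda$ for all $n$ --- a contradiction. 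This bypasses infinite-dimensional subspaces, reflexivity, and the bidual entirely; you should restructure the converse around this contradiction argument rather than around complementing closed subspaces one at a time.
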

\begin{proof}
Of course, it suffices to deal with infinite dimensional spaces $X$, and in light of the Phelps \cite{Phe58} characterization,
it suffices to prove only one implication (passing from retractions to Hilbert space).
By inspection of the proof of the Lindenstrauss-Tzafriri characterization of the Hilbert space \cite{LT71}, it is
clear that a Banach space is isomorphically Hilbert if and only if there exists some $\lambda>0$ such that every finite-dimensional
subspace $E\subset X$ is $\lambda$-complemented in $X$. 
Proceeding by contradiction, suppose that every convex compact is a Lipschitz retract of $X$, but the complementation constants $\lambda(E,X)$,
where $E$ runs through all finite-dimensional subspaces of $X$,
are not uniformly bounded. It is easy to see that this also means that for any finite-codimensional subspace $Y\subset X$, 
 the complementation constants $\lambda(E,X)$, where $E\subset Y$ is finite dimensional, are not uniformly bounded.
 We construct inductively a sequence $(E_n)$
of finite -dimensional subspaces of $X$, such that this sequence forms an FDD with the projection constant at most $2$ of its closed span
(a subspace of $X$), so that $\lambda(E_n,X)>n$. Let us describe the inductive step. Having constructed the initial sequence
$(E_n)_1^N$ such that the FDD has the projection constant bounded by $2-\frac1N$, we use the Mazur technique for constructing
basic sequences \cite{Fab1} p.191. Namely, there exists a finite set of functionals $\{f_i\}_1^M$ from $X^*$ that is $(1+\frac1{2N^2})$-norming
for $E_1\oplus\dots\oplus E_N$. Let $Y=\cap_1^M\text{Ker}f_i$ be a finite-codimensional subspace of $X$. Now, it suffices to choose
a finite dimensional subspace $E_{N+1}$ of $Y$ such that $\lambda(E_{N+1},X)>N$. The projection constant for the new FDD is under control.

By the same argument as in the proof of Theorem \ref{complemented}
 it follows that  the Lipschitz norm of any Lipschitz
retraction from $X$ onto $B_E$ cannot be less than $\lambda(E,X)$. Let us pass to the construction of $K$.
Fix a sequence $(\tau_n)$ of positive numbers such that $\sum \tau_n$ is finite.
Finally, set $K$ to be the closed convex hull of $\cup \tau_n B_{E_n}$. Supposing that $\phi:X\to K$ is a $\lambda$-Lipschitz retraction,
for some $\lambda>0$, we compose $\phi$ with the canonical retraction $r_n$ of $K$ onto $\tau_n B_{E_n}$ (which exists due to the
structure of $K$, and has a Lipschitz constant at most $6$), for $n>10\lambda$. Then $r_n\circ\phi:X\to\tau_n B_{E_n}$
is a Lipschitz retraction of Lipschitz norm at most $6\lambda$, hence
$\lambda(E_n,X)\le6\lambda$ which is a contradiction.
\end{proof}

This leaves us with the natural question whether at least uniformly continuous retractions onto convex compact subsets of a general Banach space are always possible.
We give a strong positive answer to this problem, showing that in fact under the URED renorming every convex and compact subset $K$ is
a uniform retract, from any bounded set $B$ containing $K$,  by means of the nearest point map. Without loss of generality, it suffices to deal with the case when $B=B_X$ is the unit ball of $X$.

\begin{definition}
Given a subset $K$ of a Banach space $X$, we will say that $X$ is $K$-URED if it is uniformly rotund in the direction $z$ for every $z\in span(K)$.
\end{definition}

\begin{definition}
Given a subset $K$ of a Banach space $X$, we will say that $X$ is $K$-UR if for every pair of sequences $(x_n),(y_n)\in S_X$ such that $x_n-y_n\in K$ and $||x_n+y_n||\rightarrow 2$ we have that $||x_n-y_n||\rightarrow 0$.
\end{definition}

\begin{lemma}\label{lemmamainUR}
Let $X$ be a Banach space that is uniformly rotund in the direction $z\in X\setminus\{0\}$. If there are $v_n,w_n\in S_X$ such that $v_n-w_n\rightarrow z$, then there exist $\widetilde{v_n},\widetilde{w_n}\in S_X$ such that $v_n-\widetilde{v_n},w_n-\widetilde{w_n}\rightarrow 0$ and $\widetilde{v_n}-\widetilde{w_n}=\lambda_nz$ for some $\lambda_n\in\R$.
\begin{proof}
First take for every $n\in\N$
$$t_n=\max\{t\ge0\;:\;w_n+tz\in S_X\}.$$
Let us prove that $t_n\rightarrow t\le1$. If we suppose that $t>1$, then taking $\lambda_n=1/t_n$ we have that
$$w_n+z=(1-\lambda_n)w_n+\lambda_n(w_n+t_nz),$$
where  $w_n,w_n+t_nz\in S_X$ and $w_n-(w_n+t_nz)=-t_nz$. In particular, $w_n+z\in B_X$  and we have that
$$1\ge||w_n+z||\ge |\,||v_n||-||z-(v_n-w_n)||\,|\rightarrow 1,$$
so by the assumption that the norm is uniformly rotund in the direction $z$ we get that $\lambda_n\rightarrow\lambda\in\{0,1\}$ which leads to a contradiction because $\lambda_n=1/t_n\rightarrow1/t\in(0,1)$. This means we can assume that $t\le1$.\\

If $t=1$ then we define $\widetilde{w_n}=w_n$ and $\widetilde{v_n}=w_n+t_nz$ and we are done. If $t<1$ we can assume that $t_n<1$ for every $n\in\N$. In this case, we define $\widetilde{v_n}=\frac{w_n+z}{||w_n+z||}$ and
$$s_n=\max\{s\ge0\;:\;\widetilde{v_n}+s(-z)\in S_X\}.$$
Following the same argument as above, we know that $s=\lim s_n\le1$. As $t<1$ we know that $||w_n+z||>1$, so $\frac{w_n}{||w_n+z||}=\widetilde{v_n}-\frac{z}{||w_n+z||}\in B_X$. Hence $s_n\ge \frac{1}{||w_n+z||}$. Just notice that $||w_n+z||\rightarrow 1$ because
$$1<||w_n+z||\le||v_n||+||z-(v_n-w_n)||\rightarrow 1,$$
so
$$1\ge s=\lim s_n\ge\lim\frac{1}{||w_n+z||}=1\;\;\;\Rightarrow\;\;\;s=1.$$
Finally, we are able to define $\widetilde{w_n}=\widetilde{v_n}+s_n(-z)$.
\end{proof}
\end{lemma}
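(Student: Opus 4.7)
The plan is to construct $\widetilde{v_n}, \widetilde{w_n}$ by moving from $w_n$ as far as possible in the direction $z$ and landing on the unit sphere, correcting by a small radial projection if the move overshoots. Concretely, define $t_n := \sup\{t \geq 0 : \|w_n + tz\| \leq 1\}$; since $s \mapsto \|w_n + sz\|$ is convex and tends to infinity, $t_n$ is finite and $\|w_n + t_n z\| = 1$. Observe first that $\|w_n + z\| \to 1$, because $\|w_n + z - v_n\| = \|z - (v_n - w_n)\| \to 0$ and $\|v_n\| = 1$.

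The key step is to show that, along subsequences, $t_n$ converges to some $t \leq 1$. Suppose toward contradiction that $t_n \to t > 1$. The pair $(w_n, w_n + t_n z)$ lies in $S_X$ with difference $-t_n z$, a scalar multiple of $z$, so URED in direction $z$ would force $t_n \|z\| \to 0$, contradicting $t > 1$, provided I can show the midpoint norm $\|w_n + (t_n/2)z\|$ tends to $1$. To obtain that midpoint estimate I use convexity of $g_n(s) := \|w_n + sz\|$: the values $g_n(0) = g_n(t_n) = 1$ yield $g_n \leq 1$ on $[0,t_n]$, while the chord inequality $g_n(1) \leq (1 - 1/s)\,g_n(0) + (1/s)\,g_n(s)$ for $s > 1$ combined with $g_n(1) \to 1$ gives $g_n(s) \geq s\,g_n(1) - (s-1)$, so $g_n(s) \to 1$ uniformly on bounded subsets of $(1,\infty)$. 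Since $t_n/2$ stays bounded (as $g_n(s) \geq s\|z\| - 1$ forces $t_n \leq 2/\|z\|$), this produces the required midpoint estimate.

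With $t \leq 1$ in hand, I split into two cases. If $t = 1$, take $\widetilde{w_n} := w_n$ and $\widetilde{v_n} := w_n + t_n z$; then $\widetilde{v_n} \in S_X$, $\widetilde{v_n} - \widetilde{w_n} = t_n z$, and $\|v_n - \widetilde{v_n}\| \leq \|v_n - w_n - z\| + (1 - t_n)\|z\| \to 0$. If $t < 1$, then eventually $\|w_n + z\| > 1$, so I radially project: $\widetilde{v_n} := (w_n + z)/\|w_n + z\|$. Since $\|w_n + z\| \to 1$ and $w_n + z$ is close to $v_n$, this yields $\|v_n - \widetilde{v_n}\| \to 0$. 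Then repeat the construction on the other side: set $s_n := \sup\{s \geq 0 : \|\widetilde{v_n} - sz\| \leq 1\}$ and $\widetilde{w_n} := \widetilde{v_n} - s_n z$. The point $w_n/\|w_n + z\| = \widetilde{v_n} - z/\|w_n + z\|$ lies in $B_X$, giving $s_n \geq 1/\|w_n + z\| \to 1$; an identical URED argument applied to $(\widetilde{v_n}, \widetilde{v_n} - s_n z)$, using $\|\widetilde{v_n} - z\| \to 1$ as the auxiliary value, forbids $\limsup s_n > 1$, so $s_n \to 1$ and $\|w_n - \widetilde{w_n}\| \to 0$.

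The main obstacle is the URED invocation: it requires the midpoint of a segment with endpoints on $S_X$ to have norm tending to $1$, and extracting this from the three numerical data $g_n(0) = g_n(t_n) = 1$, $g_n(1) \to 1$ via convexity is the only delicate computation in the proof. Once that estimate is in place the rest is routine bookkeeping and a symmetric repetition for the second case.
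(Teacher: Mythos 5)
Your construction coincides with the paper's: the same $t_n$, the same contradiction argument to force $t\le 1$, the same case split at $t=1$, the same radial projection $\widetilde{v_n}=(w_n+z)/\|w_n+z\|$, and the same $s_n\ge 1/\|w_n+z\|\to 1$ on the other side. The only difference is cosmetic: the paper applies uniform rotundity in the direction $z$ to the general convex combination $w_n+z=(1-\lambda_n)w_n+\lambda_n(w_n+t_nz)$ with $\lambda_n=1/t_n$, implicitly using the same convexity reduction to midpoints that you carry out explicitly.

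That explicit reduction, however, is incomplete as written. Your chord inequality $g_n(s)\ge s\,g_n(1)-(s-1)$ holds only for $s>1$, so it delivers the midpoint estimate $g_n(t_n/2)\to 1$ only when $t>2$; if $1<t\le 2$ the midpoint $t_n/2$ eventually lies in $(0,1]$ and that inequality says nothing about it. You need the symmetric chord: since eventually $t_n/2<1<t_n$, write $1=\theta\,(t_n/2)+(1-\theta)\,t_n$ with $\theta=2(t_n-1)/t_n\to 2(t-1)/t>0$, so convexity gives $g_n(1)\le\theta\,g_n(t_n/2)+(1-\theta)g_n(t_n)$, hence $g_n(t_n/2)\ge 1-(1-g_n(1))/\theta\to 1$. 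With this one line added the midpoint estimate holds for every $t>1$, and the remainder of your argument (including the symmetric application to $(\widetilde{v_n},\widetilde{v_n}-s_nz)$) goes through exactly as in the paper.
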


\begin{prop}\label{equivURED}
Let $X$ be a Banach space and $z\in S_X$, the following assertions are equivalent:
\begin{itemize}
\item X is uniformly rotund in the direction $z$.
\item For every pair of sequences $x_n,y_n\in S_X$ such that $x_n-y_n\rightarrow \lambda z,$ for some $\lambda\in \R$, if $||x_n+y_n||\rightarrow 2$ then $\lambda=0$.
\end{itemize}
\begin{proof}
If $X$ is uniformly rotund in the direction $z$ and we suppose by contradiction that the second assertion does not hold, then there exist $x_n,y_n\in S_X$ such that $x_n-y_n\rightarrow \lambda z$ for some $\lambda\neq0$ with $||x_n+y_n||\rightarrow 2$. Then using Lemma \ref{lemmamainUR} we get $\widetilde{x_n},\widetilde{y_n}\in S_X$ such that $\widetilde{x_n}-x_n,\widetilde{y_n}-y_n\rightarrow 0$ and $\widetilde{x_n}-\widetilde{y_n}=\lambda_nz$. Now, we have
$$2\ge||\widetilde{x_n}+\widetilde{y_n}||\ge||x_n+y_n||-(||x_n-\widetilde{x_n}||+||y_n-\widetilde{y_n}||)\rightarrow 2,$$
so $\lambda_n\rightarrow0$ which is impossible because $\lambda_n z=\widetilde{x_n}-\widetilde{y_n}\rightarrow \lambda z\neq0$.
The other implication is straightforward.
\end{proof}
\end{prop}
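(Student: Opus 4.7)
The plan is to prove the two implications separately, leveraging Lemma \ref{lemmamainUR} for the forward direction and a standard subsequence argument for the converse.

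For the forward direction, assume $X$ is uniformly rotund in the direction $z$, and suppose for contradiction that there exist $(x_n), (y_n) \subset S_X$ with $x_n - y_n \to \lambda z$ and $\|x_n + y_n\| \to 2$, but $\lambda \neq 0$. The obstacle is that the URED definition requires the difference $x_n - y_n$ to be exactly a scalar multiple of $z$, whereas here we only have this in the limit. This is precisely where Lemma \ref{lemmamainUR} enters: applying it with the direction $\lambda z$ (which generates the same line as $z$, so URED in direction $z$ is equivalent to URED in direction $\lambda z$), we obtain perturbed sequences $\widetilde{x_n}, \widetilde{y_n} \in S_X$ such that $\|\widetilde{x_n} - x_n\|, \|\widetilde{y_n} - y_n\| \to 0$ and $\widetilde{x_n} - \widetilde{y_n} = r_n z$ for some $r_n \in \R$. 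A triangle inequality estimate gives $\|\widetilde{x_n} + \widetilde{y_n}\| \geq \|x_n + y_n\| - \|x_n - \widetilde{x_n}\| - \|y_n - \widetilde{y_n}\| \to 2$, so $\|(\widetilde{x_n} + \widetilde{y_n})/2\| \to 1$. The URED hypothesis now applies directly, yielding $\|\widetilde{x_n} - \widetilde{y_n}\| \to 0$. Yet by construction $\widetilde{x_n} - \widetilde{y_n} \to \lambda z$, which forces $\lambda z = 0$ and hence $\lambda = 0$, a contradiction.

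For the converse, assume the sequence condition holds, and let $(x_n), (y_n) \subset S_X$ with $\|(x_n + y_n)/2\| \to 1$ and $x_n - y_n = r_n z$ for some $(r_n) \subset \R$. Since $|r_n| \, \|z\| = \|x_n - y_n\| \leq 2$, the sequence $(r_n)$ is bounded. To conclude $r_n \to 0$, I would argue via subsequences: any subsequence of $(r_n)$ admits a further convergent subsequence $r_{n_k} \to \lambda$, along which $x_{n_k} - y_{n_k} \to \lambda z$ and $\|x_{n_k} + y_{n_k}\| \to 2$; the hypothesis then forces $\lambda = 0$. Consequently every subsequence of $(r_n)$ has a further subsequence tending to $0$, so $r_n \to 0$ and therefore $\|x_n - y_n\| \to 0$, as required.

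The main (and essentially only) obstacle is justifying the perturbation step in the forward direction, which is exactly the content of Lemma \ref{lemmamainUR}. Once that perturbation is available, both implications reduce to routine manipulations with the triangle inequality and sequential compactness in $\R$.
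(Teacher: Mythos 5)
Your proposal is correct and follows essentially the same route as the paper: the forward implication is handled by perturbing the sequences via Lemma \ref{lemmamainUR} so that their differences lie exactly on the line spanned by $z$ (with the harmless observation that uniform rotundity in direction $z$ and in direction $\lambda z$ coincide), and the converse is the routine subsequence argument the paper dismisses as straightforward. No gaps.
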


\begin{prop}
Let $X$ be a Banach space and $K\subset X$. If $K$ is compact, then $X$ is $K$-URED if and only if it is $K$-UR.
\begin{proof}
If $X$ is $K$-URED, let us take $x_n,y_n\in S_X$ such that $x_n-y_n\in K$ and $||x_n+y_n||\rightarrow 2$. Then, as $K$ is compact, $x_n-y_n\rightarrow z\in K\subset span(K)$ so $||x_n-y_n||\rightarrow 0$ just by Proposition \ref{equivURED}.\\
If $X$ is $K$-UR then it is trivially $K-URED$.
\end{proof}
\end{prop}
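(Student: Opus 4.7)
The plan is to treat the two implications separately, with the forward direction being the substantive one. For the forward direction ($K$-URED $\Rightarrow$ $K$-UR), I would take sequences $(x_n),(y_n)\subset S_X$ with $x_n - y_n \in K$ and $\|x_n+y_n\|\to 2$, and aim to prove $\|x_n - y_n\|\to 0$. The key observation is that $(x_n - y_n)\subset K$ and $K$ is compact, so any subsequence of $(x_n - y_n)$ admits a further subsequence converging to some $z\in K$. Since $K\subset\operatorname{span}(K)$, the $K$-URED hypothesis yields that $X$ is uniformly rotund in direction $z$, so Proposition \ref{equivURED} (with $\lambda=1$), applied along the convergent subsequence where $x_n - y_n \to 1\cdot z$ and $\|x_n+y_n\|\to 2$, forces $z=0$. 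Since every subsequential limit of $(x_n-y_n)$ is zero, a standard two-subsequence argument upgrades this to convergence of the full sequence, giving $x_n - y_n \to 0$.

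For the reverse direction ($K$-UR $\Rightarrow$ $K$-URED), I would argue that the implication is essentially immediate from the definitions together with compactness: any pair of sequences $(x_n),(y_n)\subset S_X$ testing uniform rotundity in direction $z\in\operatorname{span}(K)$ has differences lying on a scalar-multiple line through $z$, and these are handled directly within the scope of the $K$-UR hypothesis once compactness of $K$ is invoked to control the scalars.

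The main obstacle I anticipate is organising the forward argument so that Proposition \ref{equivURED} is applied in precisely the direction $z$ supplied by compactness of $K$, and then upgrading the subsequential conclusion to a conclusion about the full sequence. Once this is handled, the equivalence falls out cleanly: the forward direction uses only that $K\subset \operatorname{span}(K)$ plus compactness of $K$ to produce a useable limit direction, while the reverse is definitional.
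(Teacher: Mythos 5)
Your forward direction is correct and follows the paper's route exactly: compactness of $K$ supplies a (subsequential) limit $z\in K\subset\operatorname{span}(K)$ of the differences, Proposition \ref{equivURED} forces $z=0$, and the two-subsequence argument upgrades this to convergence of the whole sequence. You are in fact more careful than the paper here, which writes $x_n-y_n\to z$ as if compactness gave convergence of the full sequence rather than of a subsequence. One cosmetic point: Proposition \ref{equivURED} is stated for directions in $S_X$, so when the subsequential limit $z$ is nonzero you should apply it with the direction $z/\|z\|$ and $\lambda=\|z\|$, not with ``$\lambda=1$'' and the unnormalized $z$.

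The reverse direction is where your proposal has a genuine gap, and the sketched justification would fail. The $K$-UR hypothesis only constrains pairs $x_n,y_n\in S_X$ whose difference lies in the compact set $K$ itself, whereas testing uniform rotundity in a direction $z\in\operatorname{span}(K)$ requires handling differences $r_nz$ ranging over the whole line $\R z$; for a generic $z\in\operatorname{span}(K)$ (say a nontrivial linear combination of two elements of $K$) no nonzero multiple of $z$ need belong to $K$, so the $K$-UR hypothesis says nothing about such pairs, and compactness of $K$ does not ``control the scalars'' back into $K$. The paper itself dismisses this direction with the word ``trivially'', but as literally stated the implication fails: take $X=(\R^2,\|\cdot\|_\infty)$ and $K=\{(t,t^2):t\in[0,1]\}$, which is compact with $\operatorname{span}(K)=\R^2$. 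If $x-y=(t,t^2)\in K$ with $y=(a,b)$ and $x=(a+t,b+t^2)$ both in $S_X$, then $a\in[-1,1-t]$ and $b\in[-1,1-t^2]$ yield $\|x+y\|_\infty\le 2-t^2$, so $\|x_n+y_n\|_\infty\to2$ forces $t_n\to0$ and $X$ is $K$-UR; yet $X$ is not uniformly rotund in the direction $(1,0)\in\operatorname{span}(K)$ (take $x_n=(1,1)$, $y_n=(-1,1)$). So the converse needs either additional hypotheses on $K$ or a modified definition of $K$-UR (e.g.\ allowing differences in $\R K$ or in $\operatorname{span}(K)$ intersected with a ball); in any case it cannot be extracted ``directly from the definitions'', and your appeal to compactness at this point does not do the work you assign to it.
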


\begin{prop}\label{mainpropUR}
Let $X$ be a Banach space and $K\subset B_X$ be a compact convex subset. If $X$ is $K-URED$ then the nearest point map from $B_X$ onto $K$ is uniformly continuous.
\begin{proof}
We are going to argue by contradiction.
Assuming that the nearest point map $R:B_X\rightarrow K$ is not uniformly continuous, then there exists an $\ep>0$ and a pair of sequences $(x_n)_{n\in\N},(y_n)_{n\in\N}\subset B_X$ such that $||x_n-y_n||\rightarrow 0$ and $||Rx_n-Ry_n||>\ep$ for every $n\in\N$.
By compactness, we may assume that the sequences $(||Rx_n-x_n||)$, $(||Ry_n-y_n||)$, $(Rx_n)$ and $(Ry_n)$ are all convergent. In particular, we claim that $\lim ||Rx_n-x_n||=\lim||Ry_n-y_n||=d\in\R^+$.
Let us first prove that $\lim ||Rx_n-x_n||=\lim||Ry_n-y_n||$. Otherwise, we may assume that $\lim||Rx_n-x_n||>\lim||Ry_n-y_n||+\rho$ for some $\rho>0$. Then there is an $n\in\N$ such that $||Rx_n-x_n||>||Ry_n-y_n||+\rho/2$ and $||x_n-y_n||\le\rho/2$, so
$$||Ry_n-x_n||\le||Ry_n-y_n||+||x_n-y_n||\le||Ry_n-y_n||+\rho/2<||Rx_n-x_n||.$$
This contradicts the definition of $R$. Now, if $d=0$ then $Rx_n,x_n\rightarrow p\in K$ and $Ry_n,y_n\rightarrow q\in K$ so $p=q$ because $x_n-y_n\rightarrow 0$. This means that $\lim||Rx_n-Ry_n||\rightarrow 0$ which is  impossible.\\

Assuming that $d>0$, we are going to use the previous Lemma \ref{lemmamainUR} with $v_n=\frac{Rx_n-x_n}{||Rx_n-x_n||}$, $w_n=\frac{Ry_n-y_n}{||Ry_n-y_n||}$ and $z=\frac{p-q}{d}$ where $p=\lim Rx_n$ and $q=\lim Ry_n$. Indeed,
$$v_n-w_n=\frac{Rx_n}{||Rx_n-x_n||}-\frac{Ry_n}{||Ry_n-y_n||}+\frac{y_n||Rx_n-x_n||-x_n||Ry_n-y_n||}{||Rx_n-x_n||\,||Ry_n-y_n||},$$
so $v_n-w_n\rightarrow z=\frac{p-q}{d}$. Let us take $\widetilde{v_n}, \widetilde{w_n}\in S_X$ given by Lemma \ref{lemmamainUR}. As
$$\left|\left| \widetilde{v_n}+\widetilde{w_n} \right|\right|\ge||v_n+w_n||-(||\widetilde{v_n}-v_n||+||\widetilde{w_n}-w_n||),$$
we have that $2\ge\lim||\widetilde{v_n}+\widetilde{w_n}||\ge\lim||v_n+w_n||$. For this reason, in order to prove that $||\widetilde{v_n}+\widetilde{w_n}||\rightarrow 2$, it is enough to prove that $\left|\left|\frac{{v_n}+{w_n}}{2} \right|\right|\rightarrow 1$.\\

Equivalently, we are going to prove that $\left|\left| \frac{(Rx_n-x_n)+(Ry_n-y_n)}{2} \right|\right|\rightarrow d$. If that does not hold, then there exists $\rho>0$ such that
$$d=\lim||Rx_n-x_n||>\lim\left|\left| \frac{(Rx_n-x_n)+(Ry_n-y_n)}{2} \right|\right|+\rho.$$
Now, there exists an $n\in\N$ such that 
$$||Rx_n-x_n||>\left|\left| \frac{(Rx_n-x_n)+(Ry_n-y_n)}{2} \right|\right|+\rho/2\;\;\;\;,\;\;\;\;||x_n-y_n||<\rho.$$
So taking into account the convexity of $K$, the next inequality is a contradiction with the definition of $R$:
$$\begin{aligned} \left|\left| \frac{Rx_n+Ry_n}{2}-x_n \right|\right|&\le \left|\left| \frac{(Rx_n-x_n)+(Ry_n-y_n)}{2} \right|\right|+\left|\left| \frac{x_n+y_n}{2}-x_n \right|\right|\\
&<\left|\left| \frac{(Rx_n-x_n)+(Ry_n-y_n)}{2} \right|\right|+\rho/2<||Rx_n-x_n||. \end{aligned}$$
Finally, we know that $||\widetilde{v_n}+\widetilde{w_n}||\rightarrow 2$ and $\widetilde{v_n}-\widetilde{w_n}=\lambda_n(p-q)$ with $\lambda_n\in\R$. As the norm of $X$ is $K$-URED, we get that $\lambda_n\rightarrow0$. This is in fact impossible because
$$||\widetilde{v_n}-\widetilde{w_n}||\ge||v_n-w_n||-(||\widetilde{v_n}-v_n||+||\widetilde{w_n}-w_n||)\rightarrow||z||>0.$$
\end{proof}
\end{prop}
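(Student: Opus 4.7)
The plan is to argue by contradiction, extracting from an alleged failure of uniform continuity two unit vectors whose normalized difference lies (essentially) in a fixed direction spanned by $K$, then invoke URED to derive a contradiction.

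First I would assume $R:B_X\to K$ is not uniformly continuous and pick $\varepsilon>0$ and sequences $(x_n),(y_n)\subset B_X$ with $\|x_n-y_n\|\to 0$ but $\|Rx_n-Ry_n\|>\varepsilon$. Using that $K$ is compact and passing to subsequences, I may assume $Rx_n\to p$, $Ry_n\to q$ in $K$, and that both $\|Rx_n-x_n\|$ and $\|Ry_n-y_n\|$ converge. A short triangle-inequality argument, combined with the defining property of $R$ (namely $\|Rx_n-x_n\|\leq\|Ry_n-x_n\|\leq\|Ry_n-y_n\|+\|x_n-y_n\|$ and vice versa), forces the two limits to agree; call the common value $d\geq 0$. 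If $d=0$, then $x_n\to p$ and $y_n\to q$, so $\|x_n-y_n\|\to 0$ forces $p=q$, contradicting $\|p-q\|\geq\varepsilon$.

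Thus $d>0$, and the normalized vectors $v_n=(Rx_n-x_n)/\|Rx_n-x_n\|$ and $w_n=(Ry_n-y_n)/\|Ry_n-y_n\|$ lie in $S_X$ with $v_n-w_n\to z:=(p-q)/d\in\operatorname{span}(K)\setminus\{0\}$. The crucial intermediate step is to show $\|v_n+w_n\|\to 2$, equivalently $\bigl\|\tfrac12[(Rx_n-x_n)+(Ry_n-y_n)]\bigr\|\to d$. If not, then for some $\rho>0$ and some $n$ with $\|x_n-y_n\|<\rho$ one has
$$\left\|\frac{Rx_n+Ry_n}{2}-x_n\right\|\leq\left\|\frac{(Rx_n-x_n)+(Ry_n-y_n)}{2}\right\|+\left\|\frac{x_n+y_n}{2}-x_n\right\|<\|Rx_n-x_n\|,$$
and since $K$ is convex, $(Rx_n+Ry_n)/2\in K$, contradicting the definition of $R$.

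Finally I apply Lemma \ref{lemmamainUR} with these $v_n,w_n$ and direction $z$ to produce $\tilde v_n,\tilde w_n\in S_X$ with $\tilde v_n-v_n\to 0$, $\tilde w_n-w_n\to 0$, and $\tilde v_n-\tilde w_n=\lambda_n z$ for some $\lambda_n\in\mathbb R$. From $\|v_n+w_n\|\to 2$ one gets $\|\tilde v_n+\tilde w_n\|\to 2$. Since $X$ is uniformly rotund in the direction $z\in\operatorname{span}(K)$, Proposition \ref{equivURED} forces $\lambda_n\to 0$, hence $\tilde v_n-\tilde w_n\to 0$. But $\|\tilde v_n-\tilde w_n\|\to\|z\|=1\neq 0$, the desired contradiction. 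The main obstacle I anticipate is the verification that $\|v_n+w_n\|\to 2$; everything else is extraction of subsequences, the auxiliary rotation lemma, and the definition of URED, but that midpoint step is where convexity of $K$ and the defining minimality of $R$ are genuinely used together.
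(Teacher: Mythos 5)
Your proposal is correct and follows essentially the same route as the paper: the same reduction to a common distance limit $d>0$, the same midpoint/convexity argument to force $\|v_n+w_n\|\to 2$, and the same use of Lemma \ref{lemmamainUR} plus the URED condition in the direction $z=(p-q)/d$ to reach a contradiction. The only (harmless) slip is the claim $\|z\|=1$ at the end; all that is needed, and all that actually follows, is $\|z\|=\|p-q\|/d\geq\varepsilon/d>0$.
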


Recall that thanks to Zizler's result  (\cite{Ziz1}) every separable Banach space has an equivalent URED renorming.
\begin{corollary}
If $X$ is a  separable Banach space, then it can be renormed so that the nearest point map from $B_X$ onto any compact and convex subset of $B_X$ is uniformly continuous.
\end{corollary}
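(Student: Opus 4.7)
The plan is to simply combine Zizler's renorming theorem with Proposition \ref{mainpropUR}, since all the heavy lifting has already been done earlier in the section. The corollary is a direct consequence, so the proof will be short and essentially just an invocation of the right tools.

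First I would invoke Zizler's result (as cited, \cite{Ziz1}), which guarantees that every separable Banach space $X$ admits an equivalent norm $|\cdot|$ which is uniformly rotund in every direction (URED). From now on I work with this renormed space $(X,|\cdot|)$ and write $B_X$ for its closed unit ball.

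Next I would observe that being URED is a strictly stronger condition than being $K$-URED for any fixed subset $K \subset X$. Indeed, URED means uniform rotundity in the direction $z$ for every $z\in S_X$, so in particular for every $z\in\mathrm{span}(K)\setminus\{0\}$ after normalization; homogeneity of the definition handles scaling. Thus for any compact convex $K\subset B_X$, the renormed $X$ is $K$-URED.

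Finally, Proposition \ref{mainpropUR} applied to this $K$ gives that the nearest point map $R\colon B_X\to K$ is uniformly continuous. Since $K$ was an arbitrary compact convex subset of $B_X$, this yields the statement of the corollary. There is essentially no obstacle here: the only thing to double-check is that the hypothesis $K\subset B_X$ in Proposition \ref{mainpropUR} is satisfied, which it is by assumption, and that URED indeed implies $K$-URED, which is immediate from the definitions.
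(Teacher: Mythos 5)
Your proposal is correct and is exactly the argument the paper intends: the corollary appears immediately after the remark recalling Zizler's URED renorming theorem, and is meant as the direct combination of that theorem with Proposition \ref{mainpropUR}, with the observation that URED trivially implies $K$-URED for every $K$. Nothing further is needed.
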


The next proof is a slight modification of the proof given by V. Zizler to renorm separable spaces with URED norm (\cite{Ziz1}).

\begin{prop}
For every compact subset $K$ of a Banach space $X$, there is an equivalent $K$-URED norm on $X$.
\begin{proof}
Let us take $V=\overline{span(K)}$ which has to be a separable Banach space. Now, there is a countable set $\{f_n\}_{n\in\N}\subset S_{X^*}$ separating the points of $V$. We define the linear operator  $T:X\rightarrow\ell_2$ as $Tx=\left(\frac{f_n(x)}{2^n}\right)_{n\in\N}$. It is easily seen that $T$ is  bounded and its restriction to $V$ is injective. Our new equivalent norm is going to be
$$|||x|||=\sqrt{||x||_X^2+||Tx||_2^2},$$
where $||\cdot||_X$ is the initial norm on $X$ and $||\cdot||_2$ is the usual norm on $\ell_2$. If we suppose that there exists a bounded sequence $(x_n)\subset X$ such that
\begin{equation}\label{eqURED}2(|||x_n+z|||^2+|||x_n|||^2)-|||2x_n+z|||^2\rightarrow 0,\end{equation}
for some $z\in V$, then
$$2(||x_n+z||^2+||x_n||^2)-||2x_n+z||^2+2(||Tx_n+Tz||_2^2+||Tx_n||_2^2)-||2Tx_n+Tz||_2^2\rightarrow0.$$
In particular, $2(||Tx_n+Tz||_2^2+||Tx_n||_2^2)-||2Tx_n+Tz||_2^2\rightarrow0$. Making use of Proposition 1 of \cite{Ziz1}, the equivalence $1\Leftrightarrow7$ means that $\ell_2$ is not uniformly rotund in the direction $Tz$, leading to a contradiction because $\ell_2$ is in fact UR. Then, \eqref{eqURED} does not hold for any bounded sequence $(x_n)\subset X$ which again by the equivalence $1\Leftrightarrow7$ means that $X$ is uniformly rotund in the direction z.
\end{proof}
\end{prop}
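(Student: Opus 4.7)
The plan is to adapt Zizler's classical URED renorming of separable Banach spaces to the present setting, localizing the construction to the closed linear span of $K$. Since $K$ is compact, the subspace $V := \overline{\operatorname{span}}(K)$ is separable, so by a standard Hahn--Banach argument I can pick a countable family $\{f_n\}_{n\in\N} \subset S_{X^*}$ whose restrictions to $V$ separate the points of $V$. I then define the bounded linear operator $T \colon X \to \ell_2$ by
$$Tx = \left(2^{-n} f_n(x)\right)_{n\in\N};$$
the geometrically decaying weights ensure boundedness, while the separation property makes $T|_V$ injective.

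Next, I would introduce the candidate equivalent norm
$$|||x|||^2 := \|x\|_X^2 + \|Tx\|_2^2,$$
which is plainly equivalent to $\|\cdot\|_X$ (the lower bound is trivial, the upper bound follows from boundedness of $T$). To verify that $|||\cdot|||$ is $K$-URED, fix $z \in V \setminus \{0\}$ and check that $|||\cdot|||$ is uniformly rotund in direction $z$. Here I would invoke the standard quadratic reformulation (the equivalence $1 \Leftrightarrow 7$ of Proposition~1 in \cite{Ziz1}): a norm fails to be uniformly rotund in direction $z$ iff there exists a bounded sequence $(x_n) \subset X$ with
$$2\left(|||x_n + z|||^2 + |||x_n|||^2\right) - |||2x_n + z|||^2 \to 0.$$
Substituting the definition of $|||\cdot|||$, the left-hand side splits cleanly as the sum of the two analogous expressions for $\|\cdot\|_X$ and for $\|T\cdot\|_2$. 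Each summand is nonnegative since, for any seminorm, $2(\|a\|^2 + \|b\|^2) - \|a+b\|^2 \ge (\|a\| - \|b\|)^2 \ge 0$. Hence if the total vanishes, so does the $\ell_2$ summand, which would mean that $\ell_2$ is not uniformly rotund in direction $Tz$. This is absurd: $Tz \ne 0$ because $T|_V$ is injective and $z \in V \setminus \{0\}$, while $\ell_2$ is uniformly convex and so uniformly rotund in every nonzero direction.

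The main technical point is the availability of this quadratic sequential characterization. The geometric definition of uniform rotundity in a direction, as stated in the paper, uses unit vectors whose difference is an exact scalar multiple of $z$, and is therefore not obviously additive across the two squared seminorms making up $|||\cdot|||^2$. The reformulation from \cite{Ziz1} removes this obstruction and makes the splitting argument transparent. Once this equivalence is granted, the rest is a direct localization of Zizler's separable URED construction, with the countable separating family for a whole separable space replaced by one for $V = \overline{\operatorname{span}}(K)$.
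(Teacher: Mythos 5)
Your proposal is correct and follows essentially the same route as the paper's proof: the same separable subspace $V=\overline{\operatorname{span}}(K)$, the same operator $T$ into $\ell_2$ built from a countable separating family, the same norm $|||x|||^2=\|x\|_X^2+\|Tx\|_2^2$, and the same appeal to the quadratic characterization ($1\Leftrightarrow 7$ of Proposition~1 in Zizler) to transfer the failure of directional rotundity to $\ell_2$. Your explicit remark that each quadratic summand is nonnegative, via $2(\|a\|^2+\|b\|^2)-\|a+b\|^2\ge(\|a\|-\|b\|)^2$, is a detail the paper leaves implicit but is needed to isolate the $\ell_2$ term.
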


\begin{corollary}
For every compact and convex subset $K\subset B_X$ of a Banach space $X$, there is an equivalent norm on X such that the nearest point map from $B_X$ onto $K$  is well defined and uniformly continuous. 
\end{corollary}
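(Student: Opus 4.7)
The plan is to combine the two preceding results in a short, essentially formal way. First, I would invoke the previous proposition to obtain an equivalent norm $|||\cdot|||$ on $X$ that is $K$-URED. Since $K$ is bounded in the original norm and the two norms are equivalent, $K$ is bounded in $|||\cdot|||$; then, by replacing $|||\cdot|||$ with $|||\cdot|||/M$ for any $M\ge\sup_{k\in K}|||k|||$, I may assume that $K\subset B_X^{|||\cdot|||}$. Scaling the norm by a positive constant clearly preserves the $K$-URED property, so we are back in the setting of Proposition \ref{mainpropUR}.

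Before invoking that proposition, I would briefly justify that the nearest point map $R:B_X^{|||\cdot|||}\to K$ is genuinely well-defined. Existence of a minimizer follows from compactness of $K$. For uniqueness, suppose $y_1,y_2\in K$ both realize the distance $d=\inf_{y\in K}|||x-y|||$ from some $x$. By convexity $(y_1+y_2)/2\in K$, so this midpoint also has distance at least $d$, and at most $d$ by the triangle inequality, so the unit vectors $(x-y_1)/d$ and $(x-y_2)/d$ have their midpoint of norm one. Their difference is a scalar multiple of $y_2-y_1\in\operatorname{span}(K)$, and the $K$-URED assumption applied in this direction forces $y_1=y_2$.

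With well-definedness in hand, Proposition \ref{mainpropUR} applied to $(X,|||\cdot|||)$ with $K\subset B_X^{|||\cdot|||}$ immediately yields that $R$ is uniformly continuous, which is exactly the conclusion of the corollary. There is no real obstacle in the argument beyond this routine assembly: all the substantive work, namely the existence of a $K$-URED renorming and the fact that $K$-UREDness upgrades the nearest point map from a mere selection to a uniformly continuous retraction, has already been carried out in the two preceding propositions.
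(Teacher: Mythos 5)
Your proof is correct and follows the same route the paper intends: renorm via the preceding proposition to get a $K$-URED norm, rescale so that $K$ lies in the new unit ball, and apply Proposition \ref{mainpropUR}. The explicit uniqueness argument (midpoint of two nearest points plus rotundity in the direction $y_2-y_1\in\operatorname{span}(K)$) is a welcome detail that the paper leaves implicit.
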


The following corollaries were recently established in \cite{CCW21}.
\begin{corollary}\label{corUC}
For every compact and convex subset $K$ of a Banach space $X$ there is a uniformly continuous retraction from X onto K.
\end{corollary}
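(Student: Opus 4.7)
The plan is to reduce to the situation of the preceding corollary and then extend the nearest point map from the unit ball to all of $X$ by precomposing with a standard radial retraction.

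First, since $K$ is compact it is bounded, so there exists $\lambda>0$ with $\lambda K\subset B_X$. If $\widetilde{R}:X\to\lambda K$ is a uniformly continuous retraction, then $R(x)=\lambda^{-1}\widetilde{R}(\lambda x)$ defines a uniformly continuous retraction from $X$ onto $K$; thus we may assume without loss of generality that $K\subset B_X$.

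Now apply the preceding corollary: there exists an equivalent norm $|||\cdot|||$ on $X$ with respect to which the nearest point map $P:B_X\to K$ is well defined and uniformly continuous. (Here $B_X$ denotes the closed unit ball in the original norm; $P$ is the metric projection computed with respect to $|||\cdot|||$.) Next, consider the radial retraction $\rho:X\to B_X$ given by $\rho(x)=x/\max\{1,\|x\|\}$, which is $2$-Lipschitz in the original norm and therefore also Lipschitz in $|||\cdot|||$ since the two norms are equivalent; in particular $\rho$ is uniformly continuous. The composition $P\circ\rho:X\to K$ is then uniformly continuous (uniform continuity passes through composition and is invariant under equivalent norms) and it acts as the identity on $K$, since $\rho$ fixes $B_X\supset K$ and $P$ fixes $K$. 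Thus $P\circ\rho$ is the desired uniformly continuous retraction.

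The substantive content has already been done in Proposition \ref{mainpropUR} together with the URED renorming supplied just above; the only point to check here is that uniform continuity survives the two cosmetic operations of rescaling $K$ into $B_X$ and of composing with the radial retraction. Both are routine, so this corollary is essentially a packaging statement, with no real obstacle beyond keeping the two (equivalent) norms straight.
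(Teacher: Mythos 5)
Your argument is correct and is essentially identical to the paper's: reduce to $K\subset B_X$ by rescaling, take the uniformly continuous nearest point map from the preceding corollary, and compose with the $2$-Lipschitz radial retraction of $X$ onto $B_X$. The extra checks you record (invariance of uniform continuity under equivalent norms and that the composition fixes $K$) are routine and the paper leaves them implicit.
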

\begin{proof}
We may assume without loss of generality that $K\subset B_X$.
It suffices to compose the $2$-Lipschitz retraction of $X$ onto $B_X$ with the uniformly continuous retraction from $B_X$ onto $K$ obtained earlier.
\end{proof}

\begin{corollary}
Compact convex subsets of Banach spaces are absolute uniform retracts.
\begin{proof}
Just embed the metric space into $\ell_\infty(\Gamma)$ for some set $\Gamma$ and use Corollary \ref{corUC}. 
\end{proof}
\end{corollary}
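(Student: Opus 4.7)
The strategy is to use $\ell_\infty(\Gamma)$, for an appropriate index set $\Gamma$, as an intermediate Banach space in which $K$ sits as a compact convex subset (so that Corollary~\ref{corUC} applies) and which enjoys an injectivity property for uniformly continuous maps.

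Concretely, let $X$ denote a Banach space in which $K$ is compact convex, and set $\Gamma := B_{X^*}$. The canonical evaluation map $\jmath: X \hookrightarrow \ell_\infty(\Gamma)$ given by $\jmath(x)(x^*) := x^*(x)$ is a linear isometric embedding. Hence $\jmath(K)$ is a compact convex subset of $\ell_\infty(\Gamma)$ isometric to $K$; identifying $K$ with $\jmath(K)$, Corollary~\ref{corUC} yields a uniformly continuous retraction $r: \ell_\infty(\Gamma) \to K$.

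Now fix an arbitrary metric space $M$ isometrically containing $K$. I would extend the isometric inclusion $\jmath: K \hookrightarrow \ell_\infty(\Gamma)$ to a $1$-Lipschitz map $\tilde\jmath: M \to \ell_\infty(\Gamma)$ via coordinate-wise McShane extension: for each $x^* \in \Gamma$, the coordinate $k \mapsto x^*(k)$ is $1$-Lipschitz on $K$, so the formula
$$\tilde\jmath_{x^*}(m) := \inf_{k \in K}\bigl[x^*(k) + d(m,k)\bigr]$$
defines a $1$-Lipschitz extension from $M$ to $\R$. Uniform boundedness across $x^* \in \Gamma$ is immediate from $|\tilde\jmath_{x^*}(m)| \le \sup_{k \in K}|x^*(k)| + d(m, k_0) \le \diam(K) + \|k_0\| + d(m, k_0)$ for any fixed $k_0 \in K$. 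Thus the coordinate-wise assembly $\tilde\jmath(m) := (\tilde\jmath_{x^*}(m))_{x^* \in \Gamma}$ lies in $\ell_\infty(\Gamma)$ and defines a $1$-Lipschitz map $M \to \ell_\infty(\Gamma)$ that agrees with $\jmath$ on $K$.

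Composing, $\rho := r \circ \tilde\jmath: M \to K$ is uniformly continuous and fixes $K$ pointwise, since $\tilde\jmath|_K = \jmath$ and $r$ retracts onto $K$. So $\rho$ is the desired uniformly continuous retraction from $M$ onto $K$. No serious obstacle arises; the argument is essentially a combination of Corollary~\ref{corUC} with the standard coordinate-wise extension property of $\ell_\infty(\Gamma)$-valued Lipschitz maps.
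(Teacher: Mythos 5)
Your proof is correct and follows essentially the same route as the paper: apply Corollary \ref{corUC} inside $\ell_\infty(\Gamma)$ and then use the coordinate-wise (McShane) extension property of $\ell_\infty(\Gamma)$ to pass to an arbitrary metric superspace $M$. You are in fact a bit more careful than the paper's one-line argument, since by embedding the Banach space $X$ linearly into $\ell_\infty(B_{X^*})$ (rather than embedding $M$ wholesale, under which the image of $K$ need not be convex) you keep $K$ convex in $\ell_\infty(\Gamma)$, which is exactly what Corollary \ref{corUC} requires.
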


{\it \bf Acknowledgements.} We would like to thank Bill Johnson for generously providing us with the statement and proof of the apparently yet unpublished Lemma \ref{lemmaJohnson}, originally due to Vitali Milman, which  has significantly simplified and strengthened our original argument.
Our thanks are extended also to Vitali Milman, who has permitted us to include his important result in our note.

\bigskip

\printbibliography
\end{document}